\numberwithin{equation}{section}
\newcommand{\comm}[1]{}
\newtheorem{theorem}{Theorem}
\newtheorem{definition}[theorem]{Definition}
\newtheorem{lemma}[theorem]{Lemma}
\newtheorem{question}[theorem]{Question}
\newtheorem{remark}[theorem]{Remark}
\newtheorem{proposition}[theorem]{Proposition}
\newtheorem{corollary}[theorem]{Corollary}
\numberwithin{theorem}{section}
\newtheorem{thm}[equation]{Theorem}
\newtheorem{cor}[equation]{Corollary}
\newtheorem{lem}[equation]{Lemma}
\newtheorem{prop}[equation]{Proposition}
\newtheorem{eg}[equation]{Example}
\theoremstyle{remark}
\newtheorem{rem}[equation]{Remark}
\DeclareMathOperator{\SO}{SO}
\DeclareMathOperator{\SL}{SL}
\DeclareMathOperator{\GL}{GL}
\DeclareMathOperator{\SU}{SU}
\DeclareMathOperator{\rank}{rank}
\DeclareMathOperator{\Mat}{Mat}
\DeclareMathOperator{\Nbhd}{Nbhd}
\DeclareMathOperator{\Cone}{Cone}
\DeclareMathOperator{\Star}{Star}
\newcommand{\Lie}[1]{\mathfrak{\lowercase{#1}}}
\renewcommand{\sl}{\Lie{sl}}
\newcommand{\so}{\Lie{so}}
\newcommand{\CC}{{\mathbb{C}}}
\newcommand{\QQ}{{\mathbb{Q}}}
\newcommand{\RR}{{\mathbb{R}}}
\newcommand{\ZZ}{{\mathbb{Z}}}
\newcommand{\Rrank}{\rank_{\RR}}
\newcommand{\ints}{\mathcal{O}}
\newcommand{\iso}{\cong}
 \newcounter{case}
 \newenvironment{case}[1][\unskip]{\refstepcounter{case}
 \medskip \noindent \textbf{Case \thecase.}\em\ #1\ }{\unskip\upshape}
 \renewcommand{\thecase}{\arabic{case}}
\numberwithin{equation}{section}
\begin{document}
 \title{Quasi-isometric embeddings of non-uniform lattices}
\author{David Fisher and Thang Nguyen}\thanks{Authors partially supported by NSF grant DMS-1308291.}
    \address{Indiana University Bloomington IN 47405}
 \email{}
\thanks{}
    \date{\today}
    \keywords{}
    \subjclass[2000]{}
    \begin{abstract}
Let $G$ and $G'$ be simple Lie groups of equal real rank and real rank at least $2$.  Let $\Gamma <G$ and $\Lambda < G'$ be non-uniform lattices.   We prove a theorem that often implies that any quasi-isometric embedding of $\Gamma$ into $\Lambda$ is at bounded distance from a homomorphism.  For example, any quasi-isometric embedding of $SL(n,\mathbb Z)$ into $SL(n, \mathbb Z[i])$ is at bounded distance from a homomorphism.  We also include a discussion of some cases when this result is not true for what turn out to be purely algebraic reasons.
\end{abstract}
\maketitle

\section{Introduction}       

The rigidity theorems of Mostow and Margulis are among the most celebrated results about the intersection of discrete groups and geometry.     With the rise of Gromov's program for the geometric study of discrete groups, coarse analogues of these results were among the most desired results \cite{Gromov}.     There are many possible translations of these theorems to a coarse setting, and so are results and questions in this direction  (see \cite{Farb} for a good survey). We first recall two basic definitions:

\begin{definition}
\label{defn:qi} Let $(X,d_X)$ and $(Y,d_Y)$ be metric spaces. Given
real numbers $L{\geq}1$ and $C{\geq}0$,a map $f:X{\rightarrow}Y$ is
called a {\em $(L,C)$-quasi-isometry} if
\begin{enumerate} \item
$\frac{1}{L}d_X(x_1,x_2)-C{\leq}d_Y(f(x_1),f(x_2)){\leq}L d_X(x_1,x_2)+C$
for all $x_1$ and $x_2$ in $X$, and, \item the $C$ neighborhood of
$f(X)$ is all of $Y$.
\end{enumerate}
\noindent If $f$ satisfies $(1)$ but not $(2)$, then $f$ is called a {\em $(L,C)$-quasi-isometric
embedding}.
\end{definition}
\begin{remark}
 Throughout this paper, all semisimple Lie groups will have no compact factors.
\end{remark}

In recent joint work with Whyte, the first author has extended these rigidity first explored by Mostow and Margulis to the context of quasi-isometric embeddings of higher rank symmetric spaces \cite{FiWh}.  As uniform lattices in simple Lie groups are quasi-isometric to the symmetric space associated to the Lie group, that paper can be read as describing quasi-isometric embeddings of uniform lattices.  In this paper we consider the somewhat harder problem of describing quasi-isometric embeddings of non-uniform lattices.  Already for self-quasi-isometries of non-uniform lattices, a striking new phenomenon arose, first discovered by R. Schwartz \cite{Sch}.  This was extended to irreducible lattices in products of rank one groups by Farb-Schwartz and Schwartz and finally to all higher rank lattices by Eskin \cite{Esk,FaSc, Sch2}.  An alternate approach
to some aspects of Eskin's proof by Drutu is important both to the work of Fisher and Whyte and here \cite{Dru}.

\begin{theorem}
\label{theorem:selfqi}
Given a non-uniform lattice $\Gamma$ in a simple noncompact Lie group $G$ not locally isomorphic to $SL(2,\mathbb R)$, any self-quasi-isometry of $\Gamma$ is at
bounded distance from a homomorphism $\Gamma' \rightarrow \Gamma$ where $\Gamma' < \Gamma$ has finite index.
\end{theorem}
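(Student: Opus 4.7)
The plan is to realize $\Gamma$ as a cocompact group of isometries of a \emph{neutered space} and reduce to quasi-isometric rigidity of the ambient symmetric space. By reduction theory, $\Gamma$ acts cocompactly on $X_0 := X \setminus \bigsqcup_i B_i$, where $X = G/K$ is the symmetric space associated to $G$ and the $B_i$ form a $\Gamma$-invariant family of disjoint cusp neighborhoods (honest horoballs when the $\QQ$-rank is one, Siegel-set-like regions otherwise). Thus $\Gamma$ is quasi-isometric to $X_0$, and it suffices to understand self-quasi-isometries of $X_0$.

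The heart of the argument is to show that any self-quasi-isometry $\phi$ of $X_0$ coarsely permutes the family $\{B_i\}$. In rank one, Schwartz's original approach identifies horospheres as the maximal polynomial-growth subspaces embedded in the neutered hyperbolic space and recovers horoballs as the regions they bound. In higher rank, the approach of Drutu and Eskin passes to asymptotic cones: $X_0$ becomes a tree-graded space whose pieces correspond precisely to the cusp regions, so any quasi-isometry must carry pieces to pieces. Either way, one concludes that $\phi$ permutes the $B_i$ up to uniformly bounded error.

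Once this is established, the coarse permutation of horospheres allows one to extend $\phi$ to a quasi-isometry $\widetilde\phi$ of the full symmetric space $X$, filling in each $B_i$ consistently using the quasi-isometric rigidity of the horoballs themselves. One then applies quasi-isometric rigidity of $X$ (Kleiner--Leeb and Eskin--Farb in higher rank; Pansu, Chow, and Tukia/Gabai in rank one) to conclude that $\widetilde\phi$ is at bounded distance from an isometry $g \in \mathrm{Isom}(X)$. Because $g$ preserves the $\Gamma$-orbit of horoballs up to bounded error, $g$ lies in the commensurator $\mathrm{Comm}_G(\Gamma)$; conjugation by $g$ then yields a homomorphism $\Gamma' \to \Gamma$ on the finite-index subgroup $\Gamma' = \Gamma \cap g^{-1}\Gamma g$, and by construction this homomorphism is at bounded distance from the original quasi-isometry.

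The main obstacle is the very first step: detecting horoballs as coarse invariants. This is precisely where the hypothesis that $G$ is not locally isomorphic to $SL(2,\RR)$ enters---for $SL(2,\ZZ)$, the neutered hyperbolic plane is quasi-isometric to a trivalent tree, which admits vastly more quasi-isometries than the cusp structure alone can constrain. In every other case the horospheres carry a nontrivial large-scale structure (nilpotent of dimension at least two, or a higher-$\QQ$-rank stratification) that is rigid enough to force the desired permutation, and supplying this rigidity is the technical core of the theorem.
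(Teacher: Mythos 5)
First, a point of orientation: the paper does not prove Theorem \ref{theorem:selfqi} at all; it quotes it from Schwartz, Farb--Schwartz, Eskin and Drutu \cite{Sch, FaSc, Sch2, Esk, Dru}. So the relevant comparison is with the strategy of those papers (and with the proof of Theorem \ref{mainthm} here, which generalizes it). Your sketch is essentially the correct outline for the real-rank-one and $\QQ$-rank-one cases, i.e.\ Schwartz's setting: there $\Gamma$ really is cocompact on $X$ minus a disjoint $\Gamma$-invariant family of horoballs, horospheres are detected as maximal polynomial-growth (or coarsely non-separating) subsets, the quasi-isometry extends across the horoballs, and ambient rigidity of $X$ produces the isometry.

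The genuine gap is that this strategy does not survive to $\QQ$-rank $\ge 2$ (e.g.\ $SL(3,\ZZ)$), which is the main case the theorem covers and the case relevant to this paper. There the cusp regions are not disjoint horoballs, $\Gamma$ does not act cocompactly on the complement of a disjoint family $\bigsqcup_i B_i$, and the asymptotic cone of $\Gamma$ is emphatically not tree-graded --- it contains the $\omega$-limits of flats, and the whole content of Eskin's and Drutu's proofs is to exploit those flats rather than any cusp combinatorics. The actual argument in higher rank shows, via the ergodic theorem (or the logarithm law), that almost every maximal flat spends most of its time in the thick part, hence is mapped sublinearly close to a flat; one then builds a boundary map on the Tits building, invokes Tits/Kleiner--Leeb rigidity to produce an isometry $g$ of $X$, and only then compares $g\Gamma$ with $\Gamma$. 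This ``flats-to-flats plus boundary map'' mechanism, not horoball detection, is the technical core, and it is exactly the route Sections 3--6 of the present paper follow for the more general Theorem \ref{mainthm}. A second, smaller gap: even once you have the isometry $g$ with $g\Gamma$ in a bounded neighborhood of $\Gamma$, concluding $g\in\mathrm{Comm}_G(\Gamma)$ requires an argument (Schwartz's counting argument in rank one; Borel density together with Ratner/Shah-type orbit-closure results in higher rank, as in the proof of Theorem \ref{mainthm} here); ``$g$ preserves the orbit of horoballs up to bounded error'' is not by itself a proof of commensurability.
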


We find another striking new phenomenon by extending this one to certain quasi-isometric embeddings of non-uniform lattices. This builds on work of Drutu, Eskin and Fisher-Whyte described above \cite{Dru, Esk, FiWh}.  Given a simple Lie group $G$ of higher real rank, the Cartan subgroup $A$ of $G$ comes with a set of distinguished hyperplanes called {\em Weyl chamber walls}. We refer to the pattern of these walls as the {\em Weyl chamber pattern}. We inherit from \cite{FiWh} an assumption on embeddings of Weyl chamber patterns and prove the following:

\begin{theorem}\label{mainthm}
Let $\Gamma, \Lambda$ be nonuniform lattices in higher rank simple Lie groups $G, G'$ of the same rank and rank at least $2$. Assume:
 \begin{enumerate}
 \item any linear embedding of the Weyl chamber pattern for $G$ into the Weyl chamber pattern for $G'$ is conformal, and
\item there is no closed subgroup $G < H <  G'$ with compact $H$ orbit on $\Lambda \backslash G'$.
 \end{enumerate}
 Then if $\varphi: \Gamma\to\Lambda$ is a QI-embedding, then $\varphi$ is at bounded distance from a homomorphism $\Gamma' \to\Lambda$ where $\Gamma'<\Gamma$ has finite index.
\end{theorem}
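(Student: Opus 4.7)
The plan is to combine three ingredients: the identification of non-uniform lattices with their neutered symmetric spaces, the classification of quasi-isometric embeddings of higher rank symmetric spaces from \cite{FiWh}, and a homogeneous-dynamics argument using hypothesis (2) to upgrade a geometric embedding to one respecting the lattice structure.

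First I would use the Milnor--\v{S}varc lemma (in the version appropriate to lattices acting on neutered symmetric spaces) to replace $\varphi$ by a QI-embedding between the neutered spaces $X_0\subset X=G/K$ and $Y_0\subset Y=G'/K'$ obtained by deleting disjoint horoball neighborhoods of the cusps of $\Gamma\backslash X$ and $\Lambda\backslash Y$. The resulting map, which I will also call $\varphi$, is an $(L',C')$-quasi-isometric embedding for constants depending on $L,C$ and on the two lattices.

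The main technical step, which I expect to be the hardest, is to promote $\varphi:X_0\to Y_0$ to a QI-embedding $\Phi:X\to Y$ of the full symmetric spaces that matches horoballs to horoballs. For self-quasi-isometries the analogous statement is the content of the work of Schwartz, Farb--Schwartz and Eskin that yields Theorem~\ref{theorem:selfqi}. In the embedding setting one must additionally rule out the possibility that a horoball of $X_0$ gets ``swallowed'' into the bulk of $Y_0$ rather than being sent near a cusp of $Y$. Here I would follow Drutu's asymptotic-cone approach \cite{Dru,Esk}: the asymptotic cone of $X_0$ is a higher-rank Euclidean building with tree-like branching along the images of the cusps, and any bi-Lipschitz embedding of asymptotic cones must preserve this branching structure. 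Hypothesis (1) on Weyl chamber patterns, together with the equal rank assumption, then forces each cusp of $\Gamma\backslash X$ to match a cusp of $\Lambda\backslash Y$; one fills in the deleted horoballs by extending $\varphi$ isometrically using the parabolic structure of $G$ and $G'$.

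Once $\Phi:X\to Y$ is in hand, the main theorem of \cite{FiWh}, applied under hypothesis (1), gives that $\Phi$ is at bounded distance from a totally geodesic embedding $\iota:X\to Y$ arising, after a self-isometry of the target, from a Lie group embedding $\rho:G\to G'$. Since $\Phi$ agrees with $\varphi$ up to bounded error on $\Gamma$, the image $\rho(\Gamma)$ lies in a uniform neighborhood of $\Lambda$ inside $G'$; equivalently, the orbit of $\rho(G)$ through the identity coset in $\Lambda\backslash G'$ is precompact.

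Finally, I would invoke hypothesis (2): by a Ratner-type argument the closure in $\Lambda\backslash G'$ of the orbit $\rho(G)\cdot e\Lambda$ is a compact homogeneous set $H\cdot e\Lambda$ with $H$ closed and $\rho(G)\leq H\leq G'$. Hypothesis (2) then forces $H=\rho(G)$, so $\rho(G)$ itself has a compact orbit. Equivalently, $\rho^{-1}(\Lambda)$ is a lattice in $\rho(G)\iso G$ commensurable with $\Gamma$. Setting $\Gamma':=\Gamma\cap\rho^{-1}(\Lambda)$, which has finite index in $\Gamma$, the restriction $\rho|_{\Gamma'}:\Gamma'\to\Lambda$ is then a homomorphism whose values lie within bounded $G'$-distance of $\varphi(\Gamma')$; since the word metric on $\Lambda$ is quasi-isometric to the restriction of the $G'$-metric, this bounded distance is also bounded in $\Lambda$, completing the proof. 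The horoball-matching in the second step is, as noted, the essential new point where the difference between an embedding and a bijective QI really matters.
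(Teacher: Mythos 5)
The central step of your plan --- promoting the quasi-isometric embedding of neutered spaces $X_0\to Y_0$ to a quasi-isometric embedding $\Phi:X\to Y$ of the ambient symmetric spaces, matching horoballs to horoballs --- is a genuine gap, and it is essentially the whole content of the theorem rather than a reduction of it. The path metric on a neutered space is exponentially distorted relative to the ambient metric, so control of $\varphi$ on $X_0$ gives almost no direct control on ambient distances; this is exactly why neither Eskin nor Drutu proceeds by first extending the map over the horoballs, even in the self-quasi-isometry case. Their strategy, and the one this paper follows, is instead: use the ergodic theorem to produce a full-measure family of flats whose divergence from the thick part is sublinear, show that the images of these flats are flats in asymptotic cones and then uniformly close to genuine flats of $Y$, and from this build a boundary map. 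In the embedding setting there are further obstructions your sketch does not engage with: since $\partial_TX$ and $\partial_TY$ are different buildings, a chamber is sent to a finite union of chambers (the boundary map is multi-valued), and the image of the Furstenberg boundary of $X$ is a measure-zero, non-dense subset of that of $Y$, so neither Tits' extension theorem (Drutu's route) nor Eskin's density/continuity argument applies; the paper must construct the building monomorphism $\chi:\partial X\to\partial Y$ by a new induction on combinatorial distance (Theorems \ref{buildingmap} and \ref{buildinghomeo}), and only then invoke Kleiner--Leeb to realize the image as a totally geodesic subspace. Your assertions that ``any bi-Lipschitz embedding of asymptotic cones must preserve this branching structure'' and that one can ``fill in the deleted horoballs by extending $\varphi$ isometrically'' are precisely the statements that require proof, and the second is not even well posed.

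The endgame also contains an error. From $\rho(\Gamma)$ lying in a bounded neighborhood of $\Lambda$ you may conclude that the $\Gamma$-orbit of the basepoint in $\Lambda\backslash G'$ is precompact, not that the $G$-orbit is: since $\Gamma$ is non-uniform, the $G$-orbit can still escape through the cusps. One must therefore apply Shah's theorem \cite{Sha} to the discrete orbit, obtaining $\overline{\Lambda g\Gamma}=\Lambda gH$ with $\Gamma<H$ closed. Moreover your conclusion ``$H=\rho(G)$, so $\rho^{-1}(\Lambda)$ is a lattice in $\rho(G)$ commensurable with $\Gamma$'' is self-contradictory: a compact $\rho(G)$-orbit would make $\rho(G)\cap\Lambda$ a \emph{cocompact} lattice in $\rho(G)\cong G$, which cannot be commensurable with the non-uniform lattice $\Gamma$. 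The correct dichotomy, obtained via Borel density, is that $H^0\cap G$ equals $G$ or is trivial; hypothesis (2) together with non-uniformity of $\Lambda$ and $\Gamma$ rules out the former, so $H$ is discrete, the compact orbit $\Lambda gH$ is finite, and hence $g\Gamma'g^{-1}\subset\Lambda$ for some finite-index $\Gamma'<\Gamma$.
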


We remark that the assumption on orbit closures is necessary. In the absence of this condition the proof of Theorem \ref{mainthm} shows that any quasi-isometric embedding is given by the following simple construction. If there is a closed $H$ orbit in $\Lambda \backslash G'$, this means that (possibly after replacing $H$ with a conjugate) $\Lambda' = H\cap \Lambda$ is cocompact. The inclusion of $\Gamma$ into $G$ is a quasi-isometric embedding by results of Lubotzky-Mozes-Raghunathan and the inclusion of $G<H$ is forced to be an isometric embedding by the ambient assumptions \cite{LMR}.  Since $\Lambda'$ is quasi-isometric to $H$, this gives a quasi-isometric embedding of $\Gamma$ into $\Lambda$.  See below for examples and more discussion.

An immediate consequence of Theorem \ref{mainthm}  is the following strengthening of the main result of Eskin in \cite{Esk}.  This proves that higher rank non-uniform lattices are {\em coarsely co-Hopfian} in the sense introduced by Kapovich and Lukyanenko in \cite{KaLu}.

\begin{corollary}
Let $\Gamma$ be a nonuniform lattice in a simple Lie group $G$ of real rank at least $2$. Then any quasi-isometric embedding $\varphi: \Gamma\to \Gamma$ is an isomorphism on finite index subgroups.
\end{corollary}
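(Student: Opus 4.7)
The plan is to reduce the corollary to Theorem \ref{mainthm} by taking $G' = G$ and $\Lambda = \Gamma$, and then upgrade the homomorphism produced by the theorem to an isomorphism onto a finite-index subgroup using Margulis superrigidity.

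First I would verify that the two hypotheses of Theorem \ref{mainthm} hold in this degenerate setting. Any linear self-embedding of the Weyl chamber pattern of $G$ into itself is an isomorphism of that pattern, hence conformal, so (1) is automatic. For (2), there is no closed subgroup strictly between $G$ and $G$, so the hypothesis is vacuous. Therefore Theorem \ref{mainthm} applies and produces a finite index subgroup $\Gamma' < \Gamma$ together with a homomorphism $\psi : \Gamma' \to \Gamma$ which lies at bounded distance from $\varphi|_{\Gamma'}$.

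Next I would argue that $\psi$ is itself a quasi-isometric embedding: being at bounded distance from the QI-embedding $\varphi$ (restricted to $\Gamma'$, which is a QI-embedded subgroup of $\Gamma$), it satisfies the same kind of bi-Lipschitz-up-to-additive-error inequality. In particular $\psi$ has unbounded image, so it is not a finite-image homomorphism. Since $\Gamma'$ is a non-uniform lattice in the higher rank simple Lie group $G$, Margulis superrigidity applies to $\psi$: it extends (possibly after passing to a further finite index subgroup, which I absorb into $\Gamma'$) to a continuous homomorphism $\tilde\psi : G \to G$. Because $G$ is simple and $\tilde\psi$ is nontrivial, $\tilde\psi$ is surjective with finite central kernel, i.e.\ essentially an automorphism of $G$.

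The final step is to extract an isomorphism onto a finite index subgroup. Since $\tilde\psi$ is an essentially an automorphism, $\tilde\psi(\Gamma')$ is again a lattice in $G$, and by construction it is contained in $\Gamma$; a lattice contained in a lattice of the same covolume type has finite index, so $[\Gamma : \psi(\Gamma')] < \infty$. For injectivity I would note that $\ker\psi$ is a finite subgroup of $\Gamma'$ (it lies in the finite center $Z(G)\cap\Gamma'$, and is also finite because $\psi$ being a QI-embedding has finite fibers), and so after intersecting $\Gamma'$ with a torsion-free finite index subgroup (or with the kernel of the action on $Z(G)\cap\Gamma'$) the restriction of $\psi$ becomes injective. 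This yields a finite index subgroup on which $\varphi$ is at bounded distance from an isomorphism onto a finite index subgroup of $\Gamma$, which is exactly the conclusion of the corollary.

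The only real obstacle is the superrigidity step: one must ensure that Margulis's theorem applies to $\psi : \Gamma' \to \Gamma < G$ despite the target being a lattice rather than a linear algebraic group, which is standard but must be invoked carefully. The rest is essentially bookkeeping about passing to finite index subgroups to kill the finite kernel.
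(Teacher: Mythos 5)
Your reduction is the right one and both hypotheses of Theorem \ref{mainthm} are indeed satisfied when $G'=G$, $\Lambda=\Gamma$: a rank-preserving linear self-map of an irreducible Weyl chamber pattern normalizes the Weyl group and is therefore a similarity by Schur's lemma, and the orbit-closure condition holds trivially (even $H=G$ fails to have a compact orbit on $\Gamma\backslash G$ since $\Gamma$ is non-uniform). Where you diverge from the paper is in the upgrade from ``homomorphism'' to ``isomorphism onto a finite index subgroup.'' The paper treats this as immediate, because the homomorphism produced in the proof of Theorem \ref{mainthm} is not an abstract one: it is $\gamma\mapsto g\gamma g^{-1}$ for an element $g\in G'$ with $g\Gamma'g^{-1}<\Lambda$, hence visibly injective, and its image is a lattice contained in the lattice $\Gamma$, hence of finite index by comparing covolumes. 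You instead treat the theorem as a black box and recover the same conclusion via Margulis superrigidity applied to the homomorphism $\psi:\Gamma'\to\Gamma<G$. That route works --- the image is infinite and not precompact since $\psi$ is at bounded distance from a QI-embedding, the Zariski closure of the image must be all of $G$ because $G$ is simple and the extension is nontrivial, the finite kernel is central by Borel density and can be killed on a further finite index subgroup, and $\psi(\Gamma')\subset\Gamma$ is a lattice hence of finite index --- but it imports a deep external theorem to prove something the construction already hands you for free. The superrigidity argument does have the virtue of being robust to how the homomorphism in Theorem \ref{mainthm} is produced, so it is a legitimate, if heavier, alternative.
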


\noindent We remark that a careful reading of Eskin's paper reveals that the corollary is already proven there.

In addition we have many results concerning quasi-isometric embeddings of distinct lattices, the simplest of which is:

\begin{corollary}
\label{corollary:sl}
Let $\varphi: SL(n, \mathbb{Z})\to SL(n,\mathbb{Z}[i])$ be a quasi-isometric embedding. Then $\varphi$ is at
bounded distance from a homomorphism $\phi: \Gamma \rightarrow SL(n, \mathbb{Z}[i])$ where $\Gamma < SL(n, \mathbb Z)$ is of finite index.
\end{corollary}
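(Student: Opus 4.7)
The plan is to deduce this from Theorem \ref{mainthm} applied with $G = SL(n,\RR)$, $G' = SL(n,\CC)$ (viewed as a real Lie group), $\Gamma = SL(n,\ZZ)$, and $\Lambda = SL(n,\ZZ[i])$; both groups are simple of real rank $n-1$, so I tacitly assume $n\ge 3$ throughout so that the rank-at-least-2 hypothesis holds.

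For hypothesis (1), the restricted root systems of $G$ and $G'$ are both of type $A_{n-1}$: in each case the diagonal Cartan gives roots $e_i-e_j$ in an $(n-1)$-dimensional Euclidean space. Hence the two Weyl chamber patterns coincide as the standard braid arrangement. Any linear embedding of this arrangement into itself is automatically a linear isomorphism and must permute the hyperplanes $\{v_i=v_j\}$; a standard argument then identifies such maps as an element of the Weyl group $S_n$ (possibly composed with $-I$) times a positive scalar, all of which are conformal.

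For hypothesis (2), the key geometric input is that $G$ is a maximal closed connected subgroup of $G'$. Indeed, the real decomposition $\mathfrak{sl}(n,\CC)=\mathfrak{sl}(n,\RR)\oplus i\,\mathfrak{sl}(n,\RR)$ exhibits the second summand, up to the scalar $i$, as the adjoint module for $\mathfrak{sl}(n,\RR)$, which is irreducible over $\RR$; hence no real Lie subalgebra strictly lies between the two. So the only $H$ requiring consideration are $H=G$ and $H=G'$ (together with their finite normalizer extensions, which do not change orbit compactness). For $H=G'$ the orbit is all of $\Lambda\backslash G'$, which is noncompact because $\Lambda$ is a nonuniform lattice. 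For a $G$-orbit through $[g_0]$ to be compact one would need $G\cap g_0^{-1}\Lambda g_0$ to be a uniform lattice in $G$, and by Margulis arithmeticity this forces such a lattice to be commensurable with $SL_1(D)(\ZZ)$ for a central division algebra $D/\QQ$ of degree $n$ splitting over $\QQ[i]$. But any splitting field of an index-$n$ central division algebra has degree a multiple of $n$ over the base, and $[\QQ[i]:\QQ]=2<n$, a contradiction.

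With both hypotheses verified, Theorem \ref{mainthm} applies directly and yields the corollary. The main technical point is the verification of (2): the Lie algebra argument for maximality reduces matters to orbits of $G$ and $G'$, and the remaining arithmetic subtlety is handled by the Brauer-theoretic fact that $\QQ[i]$ cannot split a central division algebra over $\QQ$ of degree at least $3$.
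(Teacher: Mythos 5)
Your overall strategy is the same as the paper's: invoke Theorem \ref{mainthm} with $G=\SL_n(\RR)\subset G'=\SL_n(\CC)$ (so $n\ge 3$), verify hypothesis (1) from the fact that both Weyl chamber patterns are the $A_{n-1}$ braid arrangement, and verify hypothesis (2) by an arithmetic argument; the paper's version of that arithmetic argument is Proposition \ref{NoSLZi} in the appendix. Your maximality argument (irreducibility of $i\,\sl(n,\RR)$ as an $\sl(n,\RR)$-module) cleanly reduces hypothesis (2) to the cases $H=G'$ (noncompact orbit since $\Lambda$ is nonuniform) and $H=G$, and is if anything more explicit than what the appendix records.

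The gap is in the case $H=G$. You assert that Margulis arithmeticity forces a cocompact lattice $G\cap g_0^{-1}\Lambda g_0$ in $G\cong \SL_n(\RR)$ to be commensurable with $\SL_1(D)(\ZZ)$ for a central division algebra $D/\QQ$ of degree $n$. That is false without further argument: anisotropic $\QQ$-forms of $\SL_n(\RR)$ also include the \emph{outer} forms $\SU_m(h;D,\tau)$, with $D$ a division algebra over a real quadratic field and $\tau$ an involution of the second kind, and these produce cocompact arithmetic lattices in $\SL_n(\RR)$ that are not of the form $\SL_1(D)(\ZZ)$. Your splitting-field punchline ($[\QQ[i]:\QQ]=2<n$) only applies after the form is known to be inner. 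The missing step — which is precisely what Proposition \ref{NoSLZi} supplies — is: the $\QQ$-form $\Lie{G}\cap\sl_n(\QQ[i])$, after tensoring with $\QQ[i]$, is a subalgebra of $\sl_n(\QQ[i])$ of type $A_{n-1}$ and hence equals $\sl_n(\QQ[i])$, so the form splits over $\QQ[i]$; it also splits over $\RR$; being inner over both the imaginary field $\QQ[i]$ and over $\RR$, it is inner over $\QQ$. Only then does the classification of anisotropic inner $\QQ$-forms give $\SL_1(D)$, and your observation that a degree-$n$ division algebra cannot be split by a quadratic field (for $n\ge 3$) finishes the proof. As written, the outer forms are unaccounted for, so the verification of hypothesis (2) is incomplete.
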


There are a number of other results that follow once one has some idea when given $G, G', \Gamma$ and $\Lambda$ as in the theorem, there is a closed subgroup $H$ containing $\Gamma$ and therefore $G$, such that $H$ has a closed orbit in $\Lambda \backslash G'$.  A partial solution to this problem is given in the appendix to this paper by Garibaldi, McReynolds, Miller and Witte-Morris.  Examples do exist and are constructed in the appendix, and their work also gives some restrictions, yielding results like:

\begin{corollary}\label{orthogonalcorollary}
Let either $m\geq n \geq 2$ or $m + n \geq 7$ and let $\Gamma$ be a non-uniform lattice in $SO(n,m)$ and $\Lambda$ a non-uniform lattice in $SO(n, m+l)$ where $l< n+m$, then any quasi-isometric embedding $\varphi: \Gamma \rightarrow \Lambda$ is at bounded distance from a homomorphism $\phi: \Gamma' \rightarrow \Lambda$ where $\Gamma'<\Gamma$ has finite index.
\end{corollary}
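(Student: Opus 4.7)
The plan is to reduce Corollary~\ref{orthogonalcorollary} to Theorem~\ref{mainthm} by verifying its two hypotheses for the pair $G = SO(n,m)$ and $G' = SO(n,m+l)$. Under either of the stated numerical restrictions the two groups have equal real rank at least $2$, and both $\Gamma$ and $\Lambda$ are non-uniform lattices, so the general setup of Theorem~\ref{mainthm} applies.

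To verify hypothesis (1), I would note that the restricted root systems of $SO(n,m)$ and $SO(n,m+l)$ are each of type $B_n$ or $D_n$, living naturally inside a common $n$-dimensional Euclidean space $\RR^n$. Any linear embedding of one such wall arrangement into the other must carry hyperplanes to hyperplanes; since the hyperplanes $\{x_i = \pm x_j\}$ appearing in $D_n \subset B_n$ alone already pin down the standard inner product on $\RR^n$ up to a positive scalar (any linear self-map preserving this family is a composition of a signed permutation with a dilation), every such embedding is automatically conformal.

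The verification of hypothesis (2) is the substantive content of the corollary, and is carried out entirely in the appendix by Garibaldi, McReynolds, Miller and Witte-Morris. The question is whether there exists a closed intermediate subgroup $SO(n,m) < H < SO(n,m+l)$ whose orbit on $\Lambda \backslash SO(n,m+l)$ is compact. The appendix classifies the candidate algebraic subgroups $H$ containing $SO(n,m)$ in $SO(n,m+l)$ and shows that, under $l < n+m$ together with the stated size conditions on $(n,m)$, no such $H$ can have a closed orbit with compact quotient. I would simply quote that classification result.

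With both hypotheses in hand, Theorem~\ref{mainthm} immediately yields the conclusion that any QI-embedding $\varphi: \Gamma \to \Lambda$ is at bounded distance from a homomorphism defined on a finite-index subgroup $\Gamma' < \Gamma$. The principal obstacle in this argument is hypothesis (2): the algebro-arithmetic analysis needed to rule out intermediate subgroups with compact orbits is delicate enough that it is segregated into a separate appendix. Hypothesis (1) and the invocation of Theorem~\ref{mainthm} itself are, by contrast, essentially formal once the appendix is accepted.
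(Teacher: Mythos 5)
Your proposal follows the paper's own (implicit) proof exactly: reduce to Theorem \ref{mainthm}, with hypothesis (1) checked for the $B_n/D_n$ wall patterns and hypothesis (2) supplied by Proposition \ref{SOinSO} of the appendix, whose contrapositive under $\ell < n+m$ rules out any connected intermediate $H$ with $\Lambda\cap H$ cocompact. The only caveat is that Proposition \ref{SOinSO} requires both $2\le n\le m$ and $n+m\ge 7$ (and Examples \ref{SO6bad} and \ref{SO2rBad} show the latter cannot be dropped), so the corollary's numerical hypotheses must be read conjunctively rather than as a genuine ``either/or,'' which is in effect what your argument does.
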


We remark here that while in the context of \cite{FiWh} it is clear that some assumption on Weyl chamber patterns is required when considering quasi-isometric embeddings of symmetric spaces as quasi-isometric embeddings of $SL(n, \mathbb R) \rightarrow SP(2(n-1), \mathbb R)$ and $SL(2, \mathbb R) \times SL(2,\mathbb R) \rightarrow SL(3, \mathbb R)$ are constructed there, but it is less clear that this assumption is needed here.  In particular, we cannot answer:

\begin{question}
Are there any quasi-isometric embeddings of $SL(n, \mathbb Z) \rightarrow SP(2(n-1), \mathbb Z)$?  Are there
any quasi-isometric embeddings of $SL(2, \mathbb Z) \times SL(2, \mathbb Z) \rightarrow SL(3, \mathbb Z)$?
\end{question}

We believe the answer to the first question is no, but a proof requires genuinely new ideas.  Either one would
need to understand all quasi-isometric embeddings of the associate symmetric spaces or one would need to find an approach to the quasi-isometric embeddings of lattices that did not make reference to the symmetric spaces.  Since $SL(2, \mathbb Z)$ is virtually free, the second question seems to admit a much wider array of approaches.

\noindent {\bf Outline of proof and differences from earlier work:}  The main lines of the proof are very similar to those in the papers of Eskin or Drutu, but with some substantial additional difficulties and also some substantial simplifications of arguments \cite{Dru, Esk}. Let $X$ be the symmetric space associated to $G$ and $Y$ the symmetric space associated to $G'$. We begin by showing that the embedding of lattices gives a map sending almost every flat to flat.  In this part of the argument, our argument resembles Drutu's more than Eskin's but simplifies the argument further particularly by using idea from \cite{FiWh}.  We show that almost every flat stays at sublinear distance from the thick part of $G/\Gamma$ and so has a well defined image in the asymptotic cone. Our argument differs from Drutu's in that we do not use the Kleinbock-Margulis logarithm law but use a more naive argument that gives a worse, but still sublinear, bound.  As in Drutu's paper, additional argument is required to show that the set of flats for which this is true is rich enough to capture enough incidences so that we have a full measure family of flats with well-defined maps from the cone of $X$  to the cone of $Y$ which also have chamber walls of any dimension mapping to chamber walls of the same dimension in the image flat. These arguments occur in subsection \ref{subsection:goodflats}.    To show that the image of a flat is a single flat, we use an argument close to the one in the paper by the first author and Whyte and in particular, use the higher rank Mostow-Morse lemma.  This lemma shows that off a set of codimension $2$ the image of any point in the flat has a neighborhood contained in a single flat and greatly simplifies the arguments from \cite{Dru, Esk} see subsection \ref{subsection:flatstoflats}. As in the papers of Drutu and Eskin, the most difficult step is to show that the boundary map we have constructed extends to a continuous morphism of buildings.  In our context there is substantial additional difficulty here, since chambers do not in fact, in general map to chambers and one has instead a map from chambers to finite collections of chambers.  Here we use the fact that the map is isometric along flats and the Tits buiding structure on the boundary of $X$ to show that this yields a well defined map from a set of full measure in the Furstenberg boundary of $X$ to a finite product of Furstenberg boundaries of $Y$.
{ Following Eskin's original argument (also used by Drutu), we use negative curvature to obtain continuity of the map on the set of chambers adjacent to a chamber wall, see section \ref{section:firstcontinuity}. The set of Weyl sectors that are adjacent to a fixed hyperplane can naturally be parametrized as a hyperbolic space, and the embedding coarsely preserves distance hence images of chambers at infinity also vary continuously. This is similar with showing boundary map is continuous in Mostow rigidity for hyperbolic manifolds. As in prior work, because the lattice is non-uniform, we only get this continuity at almost every chamber wall and for almost every chamber adjacent to the chamber wall. A short additional argument is required because our map on chambers is multi-valued. This also makes the next step much harder compared with the quasi-isometry case.

The next step is done in section \ref{section:secondcontinuity}, showing the boundary map extends continuously to a building homomorphism from boundary at infinity of $X$ to a sub-building of boundary at infinity of $Y$. This is the most novel and most difficult part of this paper.  The two buildings $\partial_TX$ and $\partial_TY$ are not isomorphic, so we cannot apply existing result of Tits as Drutu did in \cite[Section 5.3 A and B]{Dru}. In Eskin's approach, an additional problem arises since the Furstenberg boundary of $X$ maps to a very thin set (measure zero, clearly not dense) in the Furstenberg boundary of $Y$ so the arguments of \cite[Section 5.4]{Esk} do not apply.  To overcome this difficulty, we work directly with the building structure at infinity.   Motivated by Tits' \cite[section 4]{Tits74}, we show, by induction on combinatorial distance, that the boundary map extends continuously to an injective adjacent preserving map on balls around a fixed chambers. This is done by first picking a good chamber in the sense that at almost every wall in each sphere (w.r.t.~combinatorial metric) around the chamber we have the continuity obtained in previous step. We also fix a good apartment containing that chamber. The induction argument uses chambers adjacent to two opposite walls. This roughly means we can get an injective continuous map on chambers adjacent to a wall if there is an opposite wall and an injective continuous map defined on adjacent chambers of the opposite wall. Moreover, to make sure that the map constructed by induction argument agrees almost everywhere with our boundary map and has desired properties (injectivity, continuity, and combinatorially well-behaved), in each step of induction argument we also have to show some combinatorial and continuity claims (see proof of Theorem \ref{buildingmap}). As a result we get an extended injective continuous boundary map which also preserves combinatorial structure of $\partial_TX$. In other word, we get a subset of $\partial_TY$ carrying a building structure of the same type of $\partial_TX$, and is homeomorphic to $\partial_TX$ as buildings. After this, we can identify the image of $X$ in $Y$ as a subsymmetric space in $Y$ using the results in \cite{KL2} as in \cite{FiWh}.
}

The rest of the argument resembles that given in \cite{Esk} (and essentially repeated in \cite{Dru}) using Ratner's theorem, but with some additional difficulties, since $G \neq G'$.  It is at this step that the group $H$ arises and the question of compact $G$ invariant sets in $G'/\Lambda$ intervenes.  By Ratner's theorem, these compact invariant sets are homogeneous and the question reduces to finding subgroups $H$ in $G'$ with $G<H$ and $\Lambda \cap H$ a cocompact lattice.  This question is analyzed in the appendix by Garibaldi, McReynolds, Miller and Witte Morris and answers are given in many cases, including those required to prove the specific results stated as Corollary \ref{corollary:sl} and Corallary \ref{orthogonalcorollary}.

\section{Preliminaries}\label{Preliminaries}

In this section, we introduce some notation and terminology.

Let $X,Y$ be the symmetric spaces corresponding to $G,G'$. Let $K,K'$ be maximal compact subgroups in $G$ and $G'$. Let $\pi: G\to G/K=X$, $p:G\to \Gamma\backslash G$, $\overline{p}:G/K \to \Gamma\backslash X$, $\overline{\pi}: \Gamma\backslash G\to \Gamma\backslash X$ are projections.

Let $A$ be the Cartan subgroup of $G$, and let $\Xi$ be the root system associated to $G$. For $\sigma\subset \Xi$, let
\[A_\sigma = \{a\in A| \alpha(\log a)=0, \forall \alpha \in \sigma\}.\]
This is a subflat in the flat $A$. When $\sigma=\varnothing$, $A_\varnothing=A$. When $\sigma=\{\alpha\}$ for any $\alpha\in\Xi$, we also denote $A_\alpha = A_{\{\alpha\}}$. For any $\alpha\in \Xi$, fix a $k_\alpha\in K$ such that two flats $\pi(A)$ and $\pi(k_\alpha A)$ intersect exactly at $\pi(A_\alpha)$. For convenience, we denote $k_\varnothing = 1$. A copy of $\pi(A_\sigma)$ is called the Weyl hyperplane associated to $\sigma\in\Xi$. By Weyl pattern at a point $x$  in a flat $F$, we mean the pattern of Weyl hyperplanes in $F$ passing through $x$. Let $W$ be a chamber, we denote by $W(\infty)$ the boundary at infinity of $W$. This is again a chamber in the building $\partial X$. We use similar notations to denote the boundary at infinity of a flat, a hyperplane, or a ray.

Let $\Xi_+\subset \Xi$ be the set of positive roots and let $A_+=\{a\in A: \alpha(\log a)>0, \forall \alpha\in\\ \Xi_+\}$. Then any chamber in $X$ will have form $\pi(gA_+)$ for some $g\in G$. If $\Delta\subset \Xi$, denote $D_\Delta^+=\{a\in A|\alpha(\log a)>0,\forall \alpha\in\Delta\}$. Denote $U_\Delta$ the unipotent subgroup of $G$ corresponding with the set of roots $\Delta$.

Let $M$ be the subgroup of $K$ consisting all elements that commute with all $\alpha\in \Xi$. Then the Furstenberg boundary of $X$ can be identified with $K/M=:\overline{K}$. Hence there is a natural measure on $\overline{K}$. Also, we denote $\overline{K'}$ for the Furstenberg boundary of $Y$.

For $\alpha\in \Xi$, denote by $P_\alpha$ the parabolic subgroup associated to the root $\alpha$. We have the Langlands decomposition $P_\alpha=M_\alpha A_\alpha N_\alpha$. Let $K_\alpha=K\cap M_\alpha$. $K_\alpha$ is a stabilizer of a fixed face in $\partial X$. There is a natural labeling map that is invariant under the action of Weyl group. And the set of faces in $\partial X$ of the same type as the face $K_\alpha$ can be identify with $K/K_\alpha$. For a face $O=kK_\alpha$, the star chamber of $O$, subset of $\overline{K}$ consists of chambers containing $O$ as a face, can be identified with $kK_\alpha$. This a a copy of a compact group, thus, there is a natural measure on each star chamber.

We use various notions of distance in this paper. Here is the list:
\begin{itemize}
\item $d(.,.)$ stands for distance in $X$ or $Y$.
\item $d_{Hau}(.,.)$ stands for Hausdorff distance between compact subsets in $X$ or $Y$.
\item $d_{\overline{K}}(.,.)$, or $d_{\overline{K'}}(.,.)$ stands for distance between chambers in $\partial X$ or $\partial Y$.
\item $dist(.,.)$ stands for combinatorial distance between two chambers or a face and a chamber in $\partial X$ or $\partial Y$.
\end{itemize}

In $X$ or $Y$, flats are maximal dimension isometric copies of Euclidean spaces. By hyperplane, we mean codimension 1 subflat in a flat. In $\partial X$ or $\partial Y$, an apartment is the boundary at infinity of a flat in $X$ or $Y$. A wall is the boundary of a hyperplane. A face is the boundary at infinity of a Weyl subsector in some hyperplane.

\section{Mapping flats to flats}
The $(L,C)$ QI-embedding $\varphi: \Gamma\to \Lambda$ induces a map of $X$ into $Y$, that is the composition of $\varphi$ and nearest point projection onto $\Gamma$. We also denote the resulting map  $\varphi$.

The outline of this section is: first we show that the image under $\varphi$ of any flat in a certain family is a flat in asymptotic cones. Then, taking advantage of this conclusion, we show that image of a flat is sublinearly diverging from an actual flat. Moreover, we could show then the image of a large proportion is uniformly closed to a flat. Readers may see same arguments in \cite{Dru} in a different order. The essential difference here is that we do not need the logarithm law, but only the ergodic theorem.

\subsection{Good flats}
\label{subsection:goodflats}

We now start with constructing a family of flats on which $\varphi$ behaves well.

Let $x_0=p(1)\in \Gamma\backslash G$, let $p(d)$ be a number so that the volume of the ball center $B(x_0,d)$ is $1-p(d)$. Note that the $d$-neighborhood of $\Gamma$, denoted $\Nbhd_d(\Gamma)$, is $p^{-1}(B(x_0,d))$. For $x,y\in \Nbhd_d(\Gamma)$:
\[L^{-1}d(x,y)-C-2L^{-1}d<d(\varphi(x),\varphi(y))<Ld(x,y)+C+2Ld.\]
By the ergodic theorem, for a.e. $g\in G$, for any $\sigma\subset \Xi$:
\[\liminf\limits_{r\to\infty}\frac{|F_\sigma\cap \Nbhd_d(\Gamma)\cap B(o,r)|}{|F_\sigma\cap B(o,r)|} > 1- 2p(d),\]
where $F_\sigma=\pi(gA_\sigma)$, $o=\pi(g)$, and $|.|$ stands for appropriate (dimension) Lebesgue measure in Euclidean (sub)flats. Let
\[r'(g,d)=\inf\{ s>0: \frac{|F_\sigma\cap \Nbhd_d(\Gamma)\cap B(o,r)|}{|F_\sigma\cap B(o,r)|} > 1- 2p(d), \forall \sigma\subset \Xi,\forall r>s\}.\]
By ergodic theorem, for any $d$, $r'(g,d)<+\infty$ for a.e. $g\in \Gamma\backslash G$. For every $d$, set $\Omega '(R,d)=\{g\in \Gamma\backslash G: r'(g,d)<R\}$. Then $\lim\limits_{R\to\infty}\mu(\Omega'(R,d))=1$ for all $d$.

Fix a $\delta>0$, there is an increasing sequence $(R_d)$ such that $R_d>e^d$ and $\mu(\Omega '(R_d,d))>1-\frac{\delta}{|\Xi|2^{d+2}}$. Set
\[\Omega '_\delta=\cap_{\alpha\in\Xi\cup\{\varnothing\}}\cap_{d=1}^\infty \Omega '(R_d,d)k_\alpha.\]
We have that $\mu(\Omega_\delta ')>1-\frac{\delta}{2}$. If $g\in \Omega_\delta '$, consider the finite union of flats $\pi(gA)\cup (\cup_{\alpha\in\Xi} \pi(gk_\alpha A))$. Note that $\cup_{\alpha\in\Xi} \pi(gk_\alpha A)$ intersects with flat $\pi(gA)$ in the Weyl pattern at $\pi(g)$. If $x\in \pi(gA)\cup (\cup_{\alpha\in\Xi} \pi(gk_\alpha A))$, then $x$ is \\$\big(\log(d(o,x)+1)+ 2p(\log(d(o,x)+1)d(o,x)\big)$-close to $\Gamma$. Therefore, for any\\ $x,y\in \pi(gA)\cup (\cup_{\alpha\in\Xi} \pi(gk_\alpha A))$, we can estimate:
\begin{multline}\label{distanceformula}
L^{-1}d(x,y)-C-L^{-1}\log(d(o,x)+1)-L^{-1}\log(d(o,y)+1)\\-2L^{-1}p(\log(d(o,x)+1))d(o,x) -2L^{-1}p(\log(d(o,y)+1))d(o,y)<d(\varphi(x),\varphi(y))<\\Ld(x,y)+C+L\log(d(o,x)+1)+L\log(d(o,y)+1)\\+2Lp(\log(d(o,x)+1))d(o,x) +2Lp(\log(d(o,y)+1))d(o,y).
\end{multline}
If we set $\beta(s)=\frac{\log(s+1)}{s}+2p(\log(s+1))$, then $\beta$ is decreasing to 0 on $[0,+\infty)$. Then (\ref{distanceformula}) can be rewritten as:
\begin{multline}\label{refinedistance}
L^{-1}d(x,y)-C-L^{-1}\beta(d(o,x))d(o,x)-L^{-1}\beta(d(o,y))d(o,y)<d(\varphi(x),\varphi(y))\\<Ld(x,y)+C+L\beta(d(o,x))d(o,x)+L\beta(d(o,y))d(o,y).
\end{multline}
This seems complicated but note that there are only two linear growing terms, $L^{-1}d(x,y)$ and $Ld(x,y)$. Other terms are sublinear and will disappear when we take asymptotic cones.

Repeat the argument in order to obtain a refined family of flats as follows. Set
\[r(g,\delta)=\inf\{s>0: \frac{|F\cap \Gamma \Omega_\delta '\cap B(o,r)|}{|F\cap B(o,r)|}>\delta, \forall r>s\},\]
where $F=\pi(gA), o=\pi(g)$. Note that for any $\delta>0$, $r(g,\delta)<+\infty$ for a.e. $g\in \Gamma\backslash G$.  Then set
\[\Omega(R, \delta)=\{g\in\Omega_\delta ': r(g,\delta)<R\}.\]
There exists $R(\delta)$ such that $\mu(\Omega(R(\delta),\delta))>\frac{\delta}{2}$. Moreover, $R(\delta)$ could be chosen to be non-increasing.
Set
\[\Omega_\delta=\cap_{k=1}^\infty \Omega(R(\frac{\delta}{2^{k-1}}),\frac{\delta}{2^{k-1}}).\]
and $\theta_\delta: (0,\infty)\to [0,1]$ be a function defined by $\theta_\delta (s)=\frac{\delta}{2^{k}}$ if $R(\frac{\delta}{2^{k-1}})<s\le R(\frac{\delta}{2^k})$ for $k=1,2,\dots$, and $\theta_\delta(s)=\delta$ is $0<s<R(\delta)$.\\
Now if $g\in \Omega_\delta$ then we get an estimate \eqref{refinedistance} about the distance under $\varphi$ of finite union of transverse flats $\cup_{\alpha\in \Xi\cup\{\varnothing\}}\pi(gk_\alpha A)$ through $\pi(g)$. Moreover, if $z\in\pi(gA)$ then there is a point $x\in \pi(gA)$ at most distance $\theta_\delta(d(o,z))d(o,z)$ from $z$, such that we have the estimate of the distance for image under $\varphi$ of a finite union of transverse flats through $x$.

Terminology: we say a flat $F$ is sub-$\theta_\delta$-diverging w.r.t $x$ if $F=\pi(gA)$ for some $g\in\Omega_\delta$ such that $x=\pi(g)$.

By the ergodic theorem, for almost every $g\in G$, for any $\alpha\in \varnothing\cup \Xi$, in (sub)flat $F_\alpha=\pi(gA_\alpha)$ we have
\[\liminf\limits_{r\to\infty}\frac{|\{v\in A_\alpha\cap B_r: gv\in \Gamma\Omega_\delta\}|}{|B_r|}> 1-2\delta,\]
where $B_r$ denotes the Euclidean ball in appropriate dimension, centered at origin, radius $r$. Let $\mathcal{G}$ be the full measure subset consisting of such $g\in G$. Note that we can take $\mathcal{G}$ to be $\Gamma$-invariant by defining $\mathcal{G}$ as the disjoint union of  the $\Gamma$ translates  of a full measure subset in $\Gamma\backslash G$. Let $\mathcal{F}$ be the family of flats that have form $F=\pi(gA)$ for some $g\in\mathcal{G}$. This say that if a flat $F$ is in the family $\mathcal{F}$, then $F$ is sub-$\theta_\delta$-diverging w.r.t. a large portion of points in $F$, also w.r.t. a large portion of points in a finite union of certain hyperplanes.

\subsection*{Taking asymptotic cones.}
We denote $[x_n]$ the point in a asymptotic cone represented by the sequence $(x_n)$. For a sequence of sets $(D_n)$, similarly we denote $[D_n]$ be a subset of a asymptotic cone consisting of points $[x_n]$, where $x_n\in D_n$ for every $n$. And we denote $[x]$, $[D]$ for the case $x_n=x$, $D_n=D$ for all $n$.

We will show that if $F_n$ is a sub-$\theta_\delta$-diverging flat w.r.t. $x_n$ then restriction of $\varphi$ on $F_n$ induces a biLipschitz map from a flat $[F_n] \subset \Cone(X,x_n,c_n,\omega)$ into $\Cone(Y,y_n,c_n,\omega)$, where $y_n=\varphi(x_n)$, $\omega$ is arbitrary nonprincipal ultrafilter, and $(c_n)$ is any sequence $\omega$-converging to infinity.
Indeed, if $(u_n)$, $(v_n)$ are two sequences in $F$ such that $\lim\limits_{\omega}\frac{d(x_n,u_n)}{c_n}=d_1<+\infty$ and $\lim\limits_{\omega}\frac{d(x_n,v_n)}{c_n}=d_2<\infty$ then
\begin{align*}
\frac{d(\varphi(x_n),\varphi(u_n))}{c_n}<L\frac{d(x_n,u_n)}{c_n}+\frac{C}{c_n}+L\frac{\beta(d(x_n,u_n))d(x_n,u_n)}{c_n}.\end{align*}
We see that if $\lim\limits_\omega\frac{d(x_n,u_n)}{c_n}>0$ then $\lim\limits_\omega \beta(d(x_n,u_n))=0$, thus we always have $\lim\limits_\omega \frac{\beta(d(x_n,u_n))d(x_n,u_n)}{c_n}=0$. So $[\varphi(u_n)]$ represents a point in $\Cone(Y,y_n,c_n,\omega)$. Moreover,
\begin{align*}L^{-1}\frac{d(u_n,v_n)}{c_n}-&\frac{C}{c_n}-L^{-1}\frac{\beta(d(x_n,u_n))d(x_n,u_n)}{c_n}-L^{-1}\frac{\beta(d(x_n,v_n))d(x_n,v_n)}{c_n}\\
<&\frac{d(\varphi(u_n),\varphi(v_n))}{c_n}<\\
<L\frac{d(u_n,v_n)}{c_n}+&\frac{C}{c_n}+L\frac{\beta(d(x_n,u_n))d(x_n,u_n)}{c_n}+L\frac{\beta(d(x_n,v_n))d(x_n,v_n)}{c_n}.\end{align*}
As above, we can see that all terms, possibly except $L\frac{d(u_n,v_n)}{c_n}$ have $\omega$-limits are zero. From this $\varphi$ induced a well-defined map $[\varphi]$ on $[[F_n]$. Moreover, this map is $L$-biLipschitz.

Assume $u_\omega=[u_n]$ be an arbitrary point in the flat $[F_n]$. We show that, not only there is a biLipschitz map on $[F_n]$ but also at any point $u_\omega\in [F_n]$, there is a $L$-biLipschitz map on finite union of flats intersecting $[F_n]$ at the Weyl pattern at $u_\omega$, and agrees with $[\varphi]$ on $[F_n]$. Abusing notation, we shall still denote the induced map on finite union of flats $[\varphi]$

In case $u_\omega=[x_n]$ the we will pick the sequence of finite unions of flats going through $x_n$ as follows: let $g_n\in\Omega_\delta$ such that $F_n=\pi(g_nA)$ and $x_n=\pi(g_n)$. For each $\alpha\in \Xi\cup\{\varnothing\}$, denote $F_{n,\alpha}=\pi(g_n k_\alpha A)$. We still have the estimate \eqref{refinedistance} for image under $\varphi$ of the finite union of flats $\cup_{\alpha\in \Xi\cup\{\varnothing\}}F_{n,\alpha}$. Therefore, $\varphi$ induces a $L$-biLipschitz map on $[\cup_{\alpha\in\Xi\cup\{\varnothing\}}F_{n,\alpha}]=\cup_{\alpha\in\Xi\cup\{\varnothing\}}[F_{n,\alpha}]$. Note that $[F_{n,\alpha}]$ intersects $[F_n]$ exactly at hyperplane containing $x_\omega=u_\omega$.

In the case $d_\omega(x_\omega,u_\omega)=d>0$, then $\lim\limits_\omega d(x_n,u_n)=+\infty$. Since $x_n=\pi(g_n)$ where $g_n\in\Omega_\delta$, by the definition of $\Omega_\delta$, there is $v_n=\pi(h_n)\in F_n$ such that
\begin{itemize}
\item $F_n=\pi(h_nA)$.
\item $d(u_n,v_n)<\theta(d(x_n,u_n))d(x_n,u_n)$, thus $[u_n]=[v_n]$.
\item at each $v_n$, there is a finite union of flats $\cup_{\alpha\in\Xi} F_{n,\alpha}$, where $F_{n,\alpha}=\pi(h_nk_\alpha A)$, containing $v_n$ and intersects $F_n$ exactly at hyperplanes going through $v_n$. Moreover, the estimate \eqref{refinedistance} works for each of the finite union of flats $F_n\cup (\cup_{\alpha\in\Xi}F_{n,\alpha})$, where $v_n$ have role of center $o$ in this situation.
\end{itemize}
Note that any sequence $(w_n)$ with $\lim\limits_\omega \frac{d(w_n,x_n)}{c_n}<+\infty$ also satisfy $\lim\limits_\omega\frac{d(w_n,v_n)}{c_n}<\infty$. Therefore $\varphi$ induces a well-defined biLipschitz map $[\varphi]$ on $[F_n]\cup_{\alpha\in\Xi}[F_{n,\alpha}]$. It is easy to see that each $[F_{n,\alpha}]$ intersects $[F]$ at the Weyl pattern at $[v_n]=[u_n]$. Note that different choices of $(v_n)$ resulted in different finite union of flats in the asymptotic cone. However, each finite union of flats always intersect with flat $[F_n]$ at the Weyl pattern at $[u_n]$.

\subsection{$[\varphi]([F_n])$ is a flat in $ \Cone (Y,y,c_n,\omega)$.}
\label{subsection:flatstoflats}

The idea of argument here is the same as in section 3.2 in \cite{FiWh}.

There exists a finite union of codimension 2 hyperplanes in $[F_n]$ such that on its complement, $\Sigma$, $[\varphi]$ locally maps into a flat. Since at each point in $[F_n]$, there are transverse flats, and a biLipschitz map defined on the finite union of flats that agrees with $[\varphi]$ on $[F_n]$, we can deduce that $[\varphi]$ locally maps Weyl pattern to Weyl pattern. Also, $[\varphi]$ is biLipschitz, hence differentiable almost everywhere. At points of differentiability in $\Sigma$, $D[\varphi]$ is a linear map preserving Weyl pattern. By assumption, that linear map is conformal. This implies locally $[\varphi]$ is 1-quasiconformal a.e.

So locally $[\varphi]$ is a quasi-conformal map that is 1-quasi-conformal a.e., by Gehring's theorem, $[\varphi]$ is smooth. So $[\varphi]$ has a derivative, and the derivative is continuous everywhere in $\Sigma$.

Consider $z_\omega\in \Sigma$, and $[\varphi]$ maps a connected neighborhood $U$ of $z_\omega$ into a flat. Choose some coordinate for $U$ and $[\varphi](U)$. Derivative $D[\varphi]$ is 1-quasiconformal linear map that preserves Weyl pattern at each point. Therefore, $D[\varphi]$ at each point is a composition of a constant multiple of identity and a linear Weyl element in the Weyl group associated with the symmetric space $Y$. Since $D[\varphi]$ is continuous, the Weyl elements component of the derivatives are the same for all points in $U$. So, up to composing with an element of Weyl group, we can assume that the derivative at each point is a multiple of identity. In the chosen coordinate, $D[\varphi](v_\omega)=f(v_\omega)\cdot Id$, for all $v_\omega\in U$. This implies we can write $[\varphi](v_\omega)=([\varphi]_{1}(v_\omega ),\ldots,[\varphi]_{d}(v_\omega))$. Then $\frac{\partial}{\partial v_{\omega ,i}}[\varphi]_{j}(v_\omega)=0$ for $1\le i\ne j\le d$. Hence, $[\varphi]_{i}$ only depends on $v_{\omega,i}$, i.e.
\[[\varphi](v_\omega)=([\varphi]_{1}(v_{\omega,1}),\ldots,[\varphi]_{d}(v_{\omega,d})).\]

$D[\varphi](v_\omega)=f(v_\omega)\cdot Id$ would imply that $[\varphi]_{1}'(v_{\omega,1})=\ldots= [\varphi]_{d}'(v_{\omega,d})=f(v_\omega)$ for all $v_\omega\in U$. Therefore $f(v_\omega+(0,\dots,\epsilon,\dots,0))=\varphi_{\omega,1}'(v_{\omega,1})=f(v_\omega)$. This implies $f$ is constant in $U$. Hence $[\varphi]$ is a fixed constant multiple of identity on the whole $U$.

Now, we know $[\varphi]$ locally is a composition of multiple of identity and an element of Weyl group. $[\varphi]$ is continuous on $\Sigma$, and $\Sigma$ is connected, thus on $\Sigma$, $[\varphi]$ has to be a fixed constant multiple of identity up to composing with a unique element of Weyl group. This property of $[\varphi]$ has to be true every where on the flat $[F_n]$ too, because $[\varphi]$ is continuous on the flat and $\Sigma$ is the complement of a codimension 2 set. Therefore $[\varphi](F_\omega)$ is a flat in $\Cone(Y,\varphi(x_n),c_n,\omega)$.

To summarize, what we have proved is the following:

\begin{proposition}\label{isoflat}
For any sequence $(c_n)$ with $\lim\limits_\omega c_n =\infty$ and any sequence of flats $F_n$ which is sub-$\theta_\delta$-diverging w.r.t. $x_n$, then $[\varphi(F_n)]$ is a flat in $\Cone(X,\varphi(x_n), c_n,\omega)$. Moreover, $\varphi$ induces a scalar multiple of an isometry $[\varphi]$ from flat $[F_n]$ in \\$\Cone(X,x_n,c_n,\omega)$ to flat $[\varphi(F_n)]$ in $\Cone(Y,\varphi(x_n),c_n,\omega)$.
\end{proposition}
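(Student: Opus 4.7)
The plan is to establish the proposition in two stages: first produce a well-defined $L$-biLipschitz map $[\varphi]\colon [F_n]\to \Cone(Y,\varphi(x_n),c_n,\omega)$ that carries local Weyl patterns on $[F_n]$ to Weyl patterns on transverse flats, and then upgrade this biLipschitz map into a scalar multiple of an isometry whose image is a single flat.

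For the first stage, I would feed the distance estimate \eqref{refinedistance} through the ultralimit. The two linear-growth terms $L^{\pm 1} d(u_n,v_n)$ produce the biLipschitz bounds, while each correction term is of the form $\beta(d(x_n,\cdot))\, d(x_n,\cdot)/c_n$; since $\beta$ decays to $0$ and $d(x_n,\cdot)/c_n$ has a finite $\omega$-limit, each such term vanishes in the cone. This immediately yields an $L$-biLipschitz map on $[F_n]$. The more delicate point is that at a general $u_\omega = [u_n]\in [F_n]$ one wants the map to be defined on a finite union of flats meeting $[F_n]$ in the Weyl pattern at $u_\omega$. Here the sub-$\theta_\delta$-diverging hypothesis is exactly what is needed: it supplies nearby centres $v_n\in F_n$ with $d(u_n,v_n) \le \theta_\delta(d(x_n,u_n)) d(x_n,u_n)$, so $[v_n]=[u_n]$, and at each $v_n$ the full transverse family $\cup_\alpha \pi(h_n k_\alpha A)$ satisfies estimate \eqref{refinedistance}. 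Taking ultralimits gives the desired local extension of $[\varphi]$ to a union of flats through $u_\omega$.

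For the second stage, I would invoke the higher-rank Mostow--Morse lemma (as used in \cite{FiWh}) to produce a finite union of codimension-$2$ hyperplanes in $[F_n]$ whose complement $\Sigma$ has the property that $[\varphi]$ maps a neighbourhood of each point into a single flat of the target cone. Combined with the local transverse structure produced in the first stage, $[\varphi]$ then sends the Weyl pattern at each point of $\Sigma$ to the Weyl pattern at its image, so wherever the biLipschitz map $[\varphi]$ is differentiable the derivative is a linear embedding of Weyl patterns. By hypothesis (1) of Theorem \ref{mainthm}, every such embedding is conformal, so $[\varphi]$ is $1$-quasiconformal a.e.\ on $\Sigma$, hence smooth by Gehring's theorem. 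Continuity of the derivative forces the Weyl-element factor in the local decomposition of $D[\varphi]$ to be locally constant; a direct computation with partial derivatives then shows the conformal factor is locally constant too. Since $\Sigma$ is connected, $[\varphi]$ is a global scalar multiple of an isometry on $\Sigma$, and continuity extends this across the codimension-$2$ set, giving a flat image.

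The principal obstacle in carrying out this plan is the first stage rather than the differential-geometric second stage: one needs a transverse-flat estimate available at \emph{every} point of $[F_n]$ in the cone, not merely at $[x_n]$, in order to see Weyl-pattern preservation everywhere. This is precisely what the construction of the sets $\Omega_\delta$ and the gauge function $\theta_\delta$ was designed to deliver, via the two-step ergodic argument combining $\Omega'_\delta$ (transverse flats stay close to $\Gamma$) with the density-$\delta$ condition on $\Gamma\Omega'_\delta$ inside typical flats. With this in hand the remainder of the argument follows the Fisher--Whyte template closely.
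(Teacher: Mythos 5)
Your proposal follows the paper's own argument essentially step for step: the first stage is the paper's ``Taking asymptotic cones'' construction (estimate \eqref{refinedistance} with vanishing sublinear terms, plus the $\theta_\delta$-gauge supplying transverse flats at every point of $[F_n]$), and the second stage is exactly subsection \ref{subsection:flatstoflats} (codimension-$2$ exceptional set, Weyl-pattern preservation, conformality via hypothesis (1), Gehring's theorem, and local constancy of both the Weyl factor and the conformal factor on the connected set $\Sigma$). The approach and all key ingredients match the paper's proof.
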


\begin{remark}
 The proposition above implies that for a flat $F\in\mathcal{F}$, $[\varphi(F)]$ is a flat in asymptotic cones with arbitrary rescaling sequence which have $\omega$-limit infinity. However, the sequence of based points are not arbitrary. Otherwise, by proposition 7.1.1 in \cite{KL2}, $\varphi(F)$ is uniformly close to a flat. And this cannot be expected in the case $\varphi$ is induced from a QI-embedding of a nonuniform lattice.
\end{remark}

\subsection{Associating to  $\varphi(F)$ a unique flat in $Y$}

\begin{proposition}\label{1flatprop}
Let $F\in\mathcal{F}$ be a flat that is sub-$\theta_\delta$-diverging w.r.t. $x\in F$. Then there is a flat $F'\subset Y$ such that for any sequence $(c_n)$ that has $\lim\limits_\omega c_n=\infty$, in \\$\Cone(Y, \varphi(x),c_n,\omega)$:
\[[\varphi(F)]=[F'].\]
Moreover, the flat $F'$ does not depend on which point $x\in F$ that we choose. This implies that $F'$ is unique.\end{proposition}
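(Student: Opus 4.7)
\emph{Strategy.} Let $y=\varphi(x)$. The plan is to construct a single flat $F'\subset Y$ by collecting, for each unit direction $v$ in $F$, the asymptotic endpoint in $\partial_\infty Y$ of $\varphi\circ\rho_v$, and then to verify that this $F'$ satisfies $[\varphi(F)]=[F']$ in every asymptotic cone based at $y$. Since the hypothesis of Proposition~\ref{isoflat} is available for \emph{every} rescaling sequence $c_n\to\infty$, the Euclidean behavior of $[\varphi(F)]$ will turn out to be stiff enough to pin down one honest ideal point per direction.

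\emph{Constructing $F'$ and checking $[\varphi(F)]=[F']$.} For each unit vector $v$ in $F$ at $x$, let $\rho_v:[0,\infty)\to F$ be the corresponding ray. Because $F$ is sub-$\theta_\delta$-diverging w.r.t.\ $x$, estimate \eqref{refinedistance} makes $\varphi\circ\rho_v$ a coarse quasi-geodesic up to sublinear error; applying Proposition~\ref{isoflat} for any scaling $c_n\to\infty$, $[\varphi\circ\rho_v]$ is a genuine Euclidean ray inside the flat $[\varphi(F)]\subset\Cone(Y,y,c_n,\omega)$. Since this Euclidean structure is the same in every cone, the direction of $\varphi(\rho_v(t))$ from $y$ must stabilize as $t\to\infty$ to a well-defined ideal point $\xi_v\in\partial_\infty Y$. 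As $v$ varies, the biLipschitz-on-flats conclusion of Proposition~\ref{isoflat} forces the assignment $v\mapsto\xi_v$ to preserve pairwise Tits angles, so $\{\xi_v\}$ is the boundary apartment of a flat in $Y$. Let $F'$ be the unique flat through $y$ with this apartment as its boundary at infinity; in $\Cone(Y,y,c_n,\omega)$, $[F']$ is a flat through the origin whose $r$ spanning directions agree with those of $[\varphi(F)]$ by construction, so the two flats coincide.

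\emph{Uniqueness and base point independence.} If $F''\subset Y$ also satisfies $[\varphi(F)]=[F'']$ in every cone based at $y$, then $[F'']=[F']$ in each such cone, which forces $\partial F''=\partial F'$ inside $\partial_\infty Y$; in the higher-rank irreducible symmetric space $Y$ the parallel set of a flat is the flat itself, so $F''=F'$. For independence of $x$, replace $x$ by any $x'\in F$: the rays from $x'$ in direction $v$ are $F$-parallel to $\rho_v$, and applying \eqref{refinedistance} on the affine strip of width $d(x,x')$ between them inside $F$ (which sits in our good family by the ergodic construction of $\mathcal{F}$) gives $d(\varphi(x+tv),\varphi(x'+tv))=O(1)$ along the thick part. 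The two rays therefore share the same ideal endpoint $\xi_v$, so the apartment $\{\xi_v\}$ and hence $F'$ does not depend on the choice of $x$.

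\emph{Main obstacle.} The delicate step is the promotion of ``$[\varphi\circ\rho_v]$ is a Euclidean ray in every asymptotic cone'' to ``$\varphi(\rho_v(t))$ converges in $\partial_\infty Y$.'' The key lever is that flatness in Proposition~\ref{isoflat} holds for \emph{every} rescaling sequence $c_n\to\infty$: this prevents any $\omega$-dependent choice of direction and forces a single point $\xi_v$. Once this convergence is in hand, the apartment-determines-flat principle in higher-rank irreducible symmetric spaces closes out the argument routinely.
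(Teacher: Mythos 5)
There is a genuine gap at the step you yourself flag as the main obstacle, and the resolution you offer does not close it. Knowing that $[\varphi\circ\rho_v]$ is a Euclidean ray in $\Cone(Y,y,c_n,\omega)$ for \emph{every} scaling sequence only controls the deviation of $\varphi(\rho_v(t))$ from candidate rays \emph{proportionally to the scale}: for a fixed sequence $(c_n)$ it says the deviation at radius $\rho\approx \eta c_n$ is $o(c_n)$, but it does not by itself exclude the possibility that the best-approximating ray from $y$ drifts as $t\to\infty$ (two scales $t_n\ll s_n$ live near the basepoint and at distance $1$, respectively, in the cone at scale $s_n$, so no single cone compares them). To convert ``flat in every cone'' into ``the direction stabilizes to a single $\xi_v\in\partial_\infty Y$'' you must run an argument at the \emph{critical scale where the deviation first becomes proportionally large}. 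This is exactly what the paper's Lemma \ref{quantifylemma} does: it sets $R_{\epsilon,n}=\sup\{\rho\le c_n:\ \varphi(F)\cap S(y,\rho)$ is $\epsilon\rho$-far from $\cup_j W_n^j\}$, passes to the cone rescaled by $R_{\epsilon,n}$ (a sequence dictated by the geometry, not chosen in advance --- this is where ``every rescaling sequence'' is genuinely used), and derives a contradiction from the \emph{convexity of the metric} along geodesics from $y$. The paper then takes Gromov--Hausdorff limits $W^j$ of the chamber sequences $W_n^j$, re-quantifies against these fixed chambers (Corollaries \ref{quancor} and \ref{flatWeylcor}), and only then reads off the apartment $\cup_j W^j(\infty)$ and the flat $F'$. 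Your proposal asserts the conclusion of this entire chain (``this prevents any $\omega$-dependent choice of direction and forces a single point $\xi_v$'') without the convexity/critical-scale mechanism that makes it true; some such quantitative interpolation across scales is unavoidable, since $\varphi(F)$ is only \emph{sublinearly}, not boundedly, close to $F'$ (see the Remark after Proposition \ref{isoflat}).

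Two smaller points. First, once convergence of directions is granted, your identification of $\{\xi_v\}$ with an apartment still needs that $\varphi\circ\rho_v$ is sublinearly close to the ray toward $\xi_v$ in order to transfer cone angles to Tits angles --- again the quantitative statement, not just flatness in cones. Second, in the base-point independence step the estimate \eqref{refinedistance} does not give $d(\varphi(x+tv),\varphi(x'+tv))=O(1)$: the error terms $\beta(d(o,\cdot))d(o,\cdot)$ grow sublinearly in $t$, so you only get $O(d(x,x'))+o(t)$. That is still enough to conclude the two limit rays are asymptotic, but the cleaner route (and the paper's) is to observe that $\Cone(Y,\varphi(x_1),c_n,\omega)$ and $\Cone(Y,\varphi(x_2),c_n,\omega)$ are canonically identified because $d(\varphi(x_1),\varphi(x_2))/c_n\to 0$, so the identity $[\varphi(F)]=[F']$ transfers verbatim.
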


Let $W_\omega^1,\cdots,W_\omega^p$ be distinct Weyl chambers at $[y]=[\varphi(x)]=[\varphi]([x])$ such that the flat $[\varphi]([F])=\bigcup_{j=1}^p W_\omega^j$. For each $W_\omega^j$, there is sequence of Weyl chambers $(W_n^j)$ such that $W_\omega^j=[W_n^j]$.

For $\epsilon, \rho >0$, we denote $C_\epsilon (y,\rho)=\{u\in Y: (1-\epsilon)\rho<d(y,u)<(1+\epsilon)\rho\}$, and $S(y,\rho)=\{u\in Y: d(y,u)=\rho\}$. We prove a lemma estimating the divergence away from chambers.

\begin{lemma}\label{quantifylemma}
Let $U$ be a subset of $Y$, containing $y$, such that in any asymptotic cone $\Cone(Y,y,d_n,\omega)$, $[U]$ is always a flat for any rescaling sequence $(d_n)$ with $\lim\limits_\omega d_n=\infty$. Assume in $\Cone(Y,y,c_n,\omega)$, we have $[U]=F_\omega$, where flat $F_\omega=\cup_{j=1}^pW_\omega^j$, union of Weyl chambers vertex at $[y]$. Let $W_n^j$ be Weyl chambers vertex at $y$ such that $W_\omega^j=[W_n^j]$, for all $j=1,\cdots, p$. For every $\epsilon>0$,  there exists $ R_\epsilon$ such that for $\omega$-a.e. $n$, and for all $\rho\in [R_\epsilon,c_n]$:
\[\sup_{z\in S(y,\rho)\cap U}d(z,\cup_{j=1}^p W_n^j)< \epsilon\rho.\]
\end{lemma}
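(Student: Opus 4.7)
The plan is a proof by contradiction. Suppose the conclusion fails for some $\epsilon>0$; then for each $R$ the set of $n$ admitting a violating $\rho\in[R,c_n]$ together with $z\in S(y,\rho)\cap U$ satisfying $d(z,\cup_j W_n^j)\ge\epsilon\rho$ has $\omega$-measure $1$. Let $\rho_n^*\in[1,c_n]$ be the supremum of such violating radii (or $1$ if none exist); it follows that $\lim_\omega\rho_n^*=\infty$, so after choosing near-maximizers one obtains $\rho_n$ with $\lim_\omega\rho_n=\infty$ and $\rho_n\le c_n$ $\omega$-a.e., together with witnesses $z_n\in S(y,\rho_n)\cap U$ such that $d(z_n,\cup_j W_n^j)\ge\epsilon\rho_n$.

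Next I form the asymptotic cone $\Cone(Y,y,\rho_n,\omega)$. The point $[z_n]$ lies at distance $1$ from $[y]$, and by the standing hypothesis $[U]$ is a flat through $[y]$ in this cone. Each $W_n^j$ is a Euclidean cone at $y$, so its image $[W_n^j]$ in any asymptotic cone based at $y$ is the corresponding Weyl chamber at $[y]$ with the same shape; hence $\cup_j[W_n^j]$ reproduces the flat $F_\omega$ in every such cone, and I denote this common flat $\mathcal{E}$. The witness estimate rescales to $d([z_n],\mathcal{E})\ge\epsilon$ in the $\rho_n$-cone.

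The crux is to show $[U]=\mathcal{E}$ in the $\rho_n$-cone, giving the contradiction $[z_n]\in[U]=\mathcal{E}$ against $d([z_n],\mathcal{E})\ge\epsilon$. For this I use the iterated ultralimit identification
\[\Cone(Y,y,\rho_n,\omega)\ \cong\ \Cone\bigl(\Cone(Y,y,c_n,\omega),\,[y],\,\rho_n/c_n,\,\omega\bigr),\]
which follows since rescaling distances first by $c_n$ and then by $\rho_n/c_n$ has the same effect as rescaling by $\rho_n$. Under this identification $[U]_{\rho_n}$ corresponds to the $\rho_n/c_n$-rescaling of $[U]_{c_n}=F_\omega=\mathcal{E}$ based at the cone point $[y]$. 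Since $\mathcal{E}$ is a Euclidean cone at $[y]$, any rescaling at $[y]$ returns $\mathcal{E}$ itself, so $[U]_{\rho_n}=\mathcal{E}$.

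The main technical subtlety lies in the case $\rho_n/c_n\to 0$, where the outer ultralimit is a tangent-cone rather than an asymptotic-cone construction at the singular point $[y]$ of $\Cone(Y,y,c_n,\omega)$. The scale-invariance of $\mathcal{E}$ at its cone point is what makes the transfer work: it propagates the equality $[U]_{c_n}=\mathcal{E}$ from scale $c_n$ down to every smaller scale $\rho_n$, even though a priori we know nothing about $[U]_{\rho_n}$ beyond the fact that it is a flat.
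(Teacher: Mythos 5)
There is a genuine gap at the crux of your argument: the claim that $[U]=\cup_{j=1}^p[W_n^j]$ in $\Cone(Y,y,\rho_n,\omega)$ is unjustified, and it is false in general. First, the identification $\Cone(Y,y,\rho_n,\omega)\cong\Cone\bigl(\Cone(Y,y,c_n,\omega),[y],\rho_n/c_n,\omega\bigr)$ does not hold when $\rho_n/c_n\to 0$: the right-hand side is a tangent cone at the basepoint of the $c_n$-cone, and it only sees the infinitesimal germ of that cone at $[y]$. The $c_n$-cone records $U$ and the $W_n^j$ only up to Hausdorff errors of size $o(c_n)$, which are unbounded at scale $\rho_n$; so no information about $[U]_{\rho_n}$ can be extracted from $[U]_{c_n}$ by rescaling. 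Second, the set $\mathcal{E}$ is not well defined independently of scale, because the chambers $W_n^j$ vary with $n$. For two geodesic rays (or Weyl chambers) issuing from $y$, the normalized divergence $d(\gamma_1(s),\gamma_2(s))/s$ is merely non-decreasing in $s$ and may pass from near $0$ to its terminal value at a transition scale lying strictly between $\rho_n$ and $c_n$; consequently a family $\{W_n^j\}$ that tiles a flat in the $c_n$-cone can fold together (and drift away from $[U]$) in the $\rho_n$-cone. So $\cup_j[W_n^j]_{\rho_n}$ need not be a flat at all, and even though $[U]_{\rho_n}$ is a flat by hypothesis, there is no reason it should coincide with $\cup_j[W_n^j]_{\rho_n}$. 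Your witness inequality $d([z_n],\cup_j[W_n^j])\ge\epsilon$ therefore produces no contradiction.

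The missing ingredient is the one piece of data your reduction throws away: the maximality of the violating radius. The paper sets $R_{\epsilon,n}$ to be the supremum of violating radii in $[1,c_n]$, first rules out $R_{\epsilon,n}\gtrsim c_n$ exactly as you do, and then, assuming $R_{\epsilon,n}\to\infty$ with $R_{\epsilon,n}/c_n\to0$, works in $\Cone(Y,y,R_{\epsilon,n},\omega)$. There the hypothesis that $[U]$ is a flat lets the witness $z_\omega$ (at distance $1$ from $[y]$, at distance $\ge\epsilon$ from $\cup_j[W_n^j]$) be extended along a ray inside $[U]$ to a point $z_\omega'$ at distance about $2$; since the radius $2R_{\epsilon,n}$ exceeds the supremum $R_{\epsilon,n}$ and is still $\le c_n$, points of $U$ at that radius \emph{are} within roughly $2\epsilon R_{\epsilon,n}$ of $\cup_j W_n^j$, so $d(z_\omega',\cup_j[W_n^j])\lesssim 2\epsilon$. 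Convexity of $t\mapsto d(\gamma(t),W_n^j)$ for each (convex) chamber, together with the fact that this function vanishes at the common vertex $[y]$, then forces $d(z_\omega,\cup_j[W_n^j])\lesssim\epsilon$, contradicting the witness bound after a minor adjustment of constants. This argument never needs to identify $[U]_{R_{\epsilon,n}}$ with $\cup_j[W_n^j]_{R_{\epsilon,n}}$; it only compares the witness point with points of $U$ one scale further out. To repair your proof you would need to reinstate that use of maximality (or find some other way to relate $U$ to the chambers at radius comparable to $\rho_n$), since the pure rescaling argument cannot do it.
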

\begin{proof}
Let
\[R_{\epsilon,n}=\sup\{\rho\in[1,c_n]:\sup_{z\in S(y,\rho)\cap U}d(z,W_n^j)\ge\epsilon\rho\}.\]
We need to show $\lim\limits_\omega R_{\epsilon,n} < +\infty$ for every $\epsilon >0$.

If  $\lim\limits_\omega \frac{R_{\epsilon,n}}{c_n}= \sigma >0$ then take $z_n\in S(y,R_{\epsilon,n})\cap U$ such that
\[d(z_n, \cup_{j=1}^p W_n^j) \ge \epsilon R_{\epsilon,n}.\]
\[\implies d_\omega([z_n], \cup_{j=1}^p W_\omega^j)\ge \epsilon \sigma.\]
This contradicts that $[U]=F_\omega=\cup_{j=1}^p W_\omega^j$.

Therefore $\lim\limits_\omega \frac{R_{\epsilon,n}}{c_n}=0$. Suppose that $\lim\limits_\omega R_{\epsilon,n}=+\infty$, in $\Cone(Y,y,R_{\epsilon,n},\omega)$, $[W_n^j]$ is also a Weyl chamber for all $j=1,\cdots,p$. Let $z_n$ as above, we have
\[d_\omega ([y],[z_n])=1\]
and
\[d_\omega([z_n],\cup_{j=1}^p [W_n^j])\ge \epsilon.\]
Since $\lim\limits_\omega \frac{R_{\epsilon,n}}{c_n} =0$, $2R_{\epsilon,n}<c_n$ for $\omega$-a.e. $n$. And by definition of $R_{\epsilon,n}$, for all $u_n\in U\cap C_\frac{\epsilon}{100}(y,2R_{\epsilon,n})$, we have $d(u_n,\cup_{j=1}^p W_n^j)< \epsilon d(y,u_n)$. Then $u_\omega =[u_n]$ has properties
\[2-\frac{\epsilon}{100}\le d(y_\omega, u_\omega) \le 2 +\frac{\epsilon}{100},\]
\[d(u_\omega,\cup_{j=1}^p W_\omega^j)<\epsilon.\]
The point $z_\omega$ is in the flat $[U]$. Let $z_\omega ' \in [U]$ on the ray $\overrightarrow{y_\omega z_\omega}$ that has $d(y_\omega,z_\omega')\in\\ (2-\frac{\epsilon}{100},2+\frac{\epsilon}{100})$. Obviously, $z_\omega '$ has to be a limit of such a sequence $u_n$ above. On the ray $\overrightarrow{y_\omega z_\omega '} $, we have
\[d(z_\omega, \cup_{j=1}^p W_\omega^j)\ge \epsilon,\]
\[d(z_\omega ', \cup_{j=1}^p W_\omega^j) < \epsilon.\]
This contradicts the convexity of the metric  on symmetric spaces. Therefore, the lemma is proven.
\end{proof}
\begin{remark}
Instead of working with a fixed $y$ and $U$, we could also make the assumption that $y_n\in U_n$ such that $[U_n]$ is a flat in any asymptotic cone.

We could also drop the assumption that $[U]$ is a flat in any asymptotic cones, and only need that $[U]$ is the limit of sequence of finite union of Weyl chambers in $\Cone(Y,y,c_n,\omega)$. The proof will be similar, but requires some extra technical details. See \cite[Lemma 3.3.5]{Dru}.
\end{remark}

For any sequence $(c_n)$ with $\lim\limits_\omega c_n= \infty$, by previous subsection, $[\varphi] ([F])$ is a flat in $\Cone(Y,y,c_n,\omega)$. Let $[\varphi]([F])=\cup_{j=1}^pW_\omega^j=\cup_{j=1}^p[W_n^j]$, where $W_\omega^j$, $W_n^j$ are Weyl chambers with vertex at, respectively, $[y]$ and $y$. Let $W^j$ be the Gromov-Hausdorff limit of the $W_n^j$, i.e. for any $r>0$, $W_n^j\cap B(y,r)\to W^j\cap B(y,r)$ in Hausdorff metric.

For $i\neq j$ let $u_n^i\in W_n^i$, $u_n^j\in W_n^j$ such that $[\overrightarrow{yu_n^i}]$ and $[\overrightarrow{yu_n^j}]$ are two rays in in $W_\omega^i$ and $W_\omega^j$. If two rays coincide then they also coincide in $\Cone(Y,y,1,\omega)$. The limit of rays in this cone is exactly the Gromov-Hausforff limit. This implies $W^i$ and $W^j$ have the same or more adjacency relation as the adjacency relation of $W_\omega^i$ and $W_\omega^j$. Because $\cup_{j=1}^pW_\omega^j(\infty) $ is a sphere, we know that $\cup_{j=1}^pW^j(\infty)$ is a Lipschitz sphere. Here, by sphere we mean an apartment at infinity. In order to show $\cup_{j=1}^pW_\omega^j(\infty) $ is actually a sphere, we show that the adjacency relation of $\{W^j(\infty): j=1,\dots,p\}$ is just actually same as adjacency relation of $\{W^j_\omega(\infty): j=1,\dots,p\}$.

Fixing some $\lambda$, we let
\[R_n=\sup\{\rho \in (0,c_n] : W_n^j\cap B(y,\rho)\subset \Nbhd_\lambda(W^j)\forall j\in\{1,\dots,p\}\}.\]
Then we always have $\lim\limits_\omega R_n=\infty$ because of the definition of $W^j$.

We claim that in $\Cone(Y,y,\sqrt{R_n},\omega)$ we have
\begin{equation}\label{claim1}[\varphi (F)] \subset\cup_{j=1}^p[W_n^j]=\cup_{j=1}^p[W^j].\end{equation}
The last equality is obvious, we only need to prove the inclusion. Let $z_\omega =[z_n]\in [\varphi (F)]$ where $z_n\in \varphi (F)\cap C_{\zeta_n}(y,\eta\sqrt{R_n})$, where $\lim\limits_\omega \zeta_n =0$. Then $d(y_\omega,z_\omega)=\eta$. By Lemma \ref{quantifylemma}, for any $\epsilon >0$, there exists $ R_\epsilon$ such that $\omega$-a.e. $n$
\[\sup_{z\in S(y,\rho)\cap \varphi(F)} d(z,\cup_{j=1}^p W_n^j)< \epsilon\rho,\]
for all $\rho\in[R_\epsilon,c_n)$. Since $\omega$-a.e. $n$, we have $R_\epsilon < d(y,z_n) <  c_n$, we must have
\[d(z_n,\cup_{j=1}^p W_n^j ) < \epsilon \eta (1+\zeta_n)\sqrt{R_n}.\]
Thus,
\[d(z_\omega ,\cup_{j=1}^p [W_n^j]) < \epsilon\eta.\]
Since $\epsilon$ can be arbitrarily small, $z_\omega \in \cup_{j=1}^p [W_n^j]$. This proves the claim, i.e. in $\Cone(Y,y,\sqrt{R_n},\omega)$, $[\varphi(F)]\subset \cup_{j=1}^p[W^j]$.

We also know that $[\varphi(F)]$ is a flat. On the other hand, $\cup_{j=1}^p[W^j]$ is a union of Weyl chambers at a common vertex, by the claim, containing a flat through that vertex. Moreover that the number of Weyl chambers in the union is exactly the number of a Weyl chambers we have in a sphere apartment at infinity. Hence, we have an equality rather than an inclusion, i.e. $[\varphi(F)]= \cup_{j=1}^p[W^j]$.
Applying lemma \ref{quantifylemma} to $\Cone(Y,y,\sqrt{R_n},\omega)$, $\varphi(F)$ and the family of chambers $W^j$, we get

\begin{corollary}\label{quancor}
For all $\epsilon >0$, there exists $R_\epsilon$, such that for $\omega$-a.e. $n$
\[\sup_{z\in S(y,\rho)\cap U} d(z, \cup_{j=1}^p W^j ) <\epsilon\rho,\]
for all $\rho\ge R_\epsilon$.
\end{corollary}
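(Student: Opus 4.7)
The plan is to deduce this corollary by directly applying Lemma \ref{quantifylemma} to the data $(U, c_n, W_n^j) = (\varphi(F), \sqrt{R_n}, W^j)$, where the $W^j$ are treated as a constant-in-$n$ sequence of Weyl chambers with vertex at $y$.

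First I would verify the hypotheses of Lemma \ref{quantifylemma} in this setting. By Proposition \ref{isoflat}, for any rescaling sequence $(d_n)$ with $\lim\limits_\omega d_n = \infty$, the image $[\varphi(F)]$ is a flat in $\Cone(Y, y, d_n, \omega)$, so the global ``flat in every cone'' hypothesis of the lemma is met with $U = \varphi(F)$. Next, in the specific cone $\Cone(Y, y, \sqrt{R_n}, \omega)$, the paragraph immediately preceding the corollary establishes that $[\varphi(F)] = \bigcup_{j=1}^p [W^j]$, where each $[W^j]$ is a Weyl chamber with vertex at $[y]$ (it is the limit of the constant sequence $W^j$ under a rescaling that tends to infinity $\omega$-a.e.). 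Thus the setup of Lemma \ref{quantifylemma} is matched, with the constant-in-$n$ choice $W_n^j := W^j$.

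Applying the lemma then yields: for each $\epsilon > 0$ there exists $R_\epsilon$ such that for $\omega$-a.e. $n$ and all $\rho \in [R_\epsilon, \sqrt{R_n}]$,
\[\sup_{z \in S(y, \rho) \cap \varphi(F)} d\bigl(z, \cup_{j=1}^p W^j\bigr) < \epsilon \rho.\]
To remove the upper restriction $\rho \le \sqrt{R_n}$, observe that $\varphi(F)$, the $W^j$, and $y$ do not depend on $n$, so the displayed inequality is a statement about the single variable $\rho$. Given any fixed $\rho \ge R_\epsilon$, the set of $n$ with $\sqrt{R_n} > \rho$ has full $\omega$-measure (since $R_n \to \infty$ $\omega$-a.e.); picking any $n$ in the intersection of that set with the $\omega$-a.e. set produced by the lemma yields the desired inequality for that $\rho$. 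Hence the bound holds for every $\rho \ge R_\epsilon$.

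The only real point requiring care is the identification $[\varphi(F)] = \bigcup_j [W^j]$ in the cone $\Cone(Y, y, \sqrt{R_n}, \omega)$: the rescaling by $\sqrt{R_n}$ rather than by $R_n$ itself is precisely what makes the $\lambda$-Hausdorff approximation of $W_n^j$ by $W^j$ negligible at the rescaled scale, and I expect this is where the main conceptual work lies. Once that identification is in hand, the corollary is an immediate specialization of Lemma \ref{quantifylemma}, followed by the routine removal of the $\rho \le \sqrt{R_n}$ restriction described above.
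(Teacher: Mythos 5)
Your proposal is correct and follows essentially the same route as the paper: apply Lemma \ref{quantifylemma} in the cone $\Cone(Y,y,\sqrt{R_n},\omega)$ with $U=\varphi(F)$ and the constant sequence $W_n^j:=W^j$, using the identification $[\varphi(F)]=\cup_j[W^j]$ established just before the corollary, and then discard the restriction $\rho\le\sqrt{R_n}$ because $\sqrt{R_n}\to\infty$ $\omega$-a.e. while the quantity being bounded is independent of $n$. Your spelled-out removal of the upper bound on $\rho$ is just a more careful version of the paper's one-line remark.
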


\begin{proof}
Apply the lemma \ref{quantifylemma}, for $\omega$-a.e. $n$
\[\sup_{z\in S(y,\rho)\cap U} d(z, \cup_{j=1}^p W^j ) \le \epsilon\rho,\]
for all $\rho\in [R_\epsilon,\sqrt{R_n})$. Since $\lim\limits_\omega\sqrt{R_n}=\infty$, the estimate holds for all $\rho\ge R_\epsilon$.
\end{proof}

\begin{corollary}\label{flatWeylcor}
For any sequence $(c_n)$ such that $\lim\limits_\omega c_n =\infty$, in $\Cone(Y,y,c_n,\omega)$
\[[\varphi(F)]=\cup_{j=1}^p [W^j].\]
\end{corollary}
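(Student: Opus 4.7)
The plan is to exploit the fact that Corollary \ref{quancor} is a statement about $\varphi(F)$ inside $Y$ itself, with an estimate that is uniform in $\rho$ once $\rho \geq R_\epsilon$. Because the bound is scale-free and involves the fixed subsets $W^j \subset Y$ (which do not depend on the rescaling sequence), it transfers unchanged to \emph{every} asymptotic cone $\Cone(Y, y, c_n, \omega)$ with $\lim_\omega c_n = \infty$, not only to the distinguished cone $\Cone(Y, y, \sqrt{R_n}, \omega)$ used to define the $W^j$ in the first place.

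First I would establish the inclusion $[\varphi(F)] \subset \cup_{j=1}^p [W^j]$ in $\Cone(Y, y, c_n, \omega)$. Pick $z_\omega = [z_n]$ with $z_n \in \varphi(F)$. If $d_\omega(y_\omega, z_\omega) = 0$ then $z_\omega = y_\omega$, which is a common vertex of every $[W^j]$. Otherwise set $t = d_\omega(y_\omega, z_\omega) > 0$, so $d(y, z_n)/c_n \to t$ along $\omega$. Given any $\epsilon > 0$, Corollary \ref{quancor} provides $R_\epsilon$; for $\omega$-a.e.\ $n$ we have $d(y, z_n) \geq R_\epsilon$, whence $d(z_n, \cup_{j=1}^p W^j) < \epsilon\, d(y, z_n)$. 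Dividing by $c_n$ and passing to the $\omega$-limit gives $d_\omega(z_\omega, \cup_{j=1}^p [W^j]) \leq \epsilon t$, and letting $\epsilon \to 0$ forces $z_\omega \in \cup_{j=1}^p [W^j]$.

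For the reverse inclusion I would use that $[\varphi(F)]$ is a flat in $\Cone(Y, y, c_n, \omega)$ by Proposition \ref{isoflat}, and therefore a union of exactly $p$ Weyl chambers based at $y_\omega$. The sets $[W^1], \ldots, [W^p]$ are also $p$ Weyl chambers at $y_\omega$ whose union contains this flat. Since each $[W^j]$ is a top-dimensional convex Weyl cone and cannot be strictly split among several chambers of $[\varphi(F)]$, the same adjacency-and-counting argument given just before Corollary \ref{quancor} (which is intrinsic to $Y$ and so transfers unchanged to any cone) forces each $[W^j]$ to coincide with one of the $p$ chambers of $[\varphi(F)]$. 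Hence $[\varphi(F)] = \cup_{j=1}^p [W^j]$.

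The main work was really carried out in building Corollary \ref{quancor}; once the sublinear control uniform in $\rho$ is in hand, the present corollary is essentially a bookkeeping step. The one conceptual point needing care is that $W^j$, defined as the Gromov--Hausdorff limit of $W_n^j$ inside $Y$, is a fixed subset independent of the rescaling sequence, whereas the identity $\cup_j [W_n^j] = \cup_j [W^j]$ only held in the original cone $\Cone(Y, y, \sqrt{R_n}, \omega)$; in an arbitrary cone, $\cup_j [W^j]$ is simply a fixed geometric object that now turns out to be the limit flat in question.
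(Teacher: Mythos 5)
Your proposal is correct and follows essentially the same route as the paper: the inclusion $[\varphi(F)]\subset\cup_{j=1}^p[W^j]$ is obtained by passing the uniform, scale-free estimate of Corollary \ref{quancor} to the limit in the cone, and equality then follows from the counting argument that $[\varphi(F)]$ is a flat containing the common vertex and the $[W^j]$ are exactly as many Weyl chambers as a flat comprises. Your added care about the case $d_\omega(y_\omega,z_\omega)=0$ and the explicit $\leq \epsilon t$ bound are minor refinements of the same argument.
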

\begin{proof}
By the previous corollary, for all $\epsilon >0$, there exists $ R_\epsilon$ such that $d(z,\cup_{j=1}^pW^j) \\< \epsilon d(y,z)$, for $d(y,z)>R_\epsilon$.
This implies that for all $z_\omega =[z_n]\in [\varphi(F)]$, and for all $\epsilon>0$: $d(z_\omega, \cup_{j=1}^p [W^j]) < \epsilon$. Thus, $[\varphi(F)]\subset \cup_{j=1}^p [W^j]$.

 As before, $\cup_{j=1}^p [W^j]$ is a set of finite union of Weyl chambers at the same vertex, and the number of the chambers is exactly the number of Weyl chambers that a flat can have. Moreover, $[\varphi(F)]$ is a flat, containing the vertex of chambers. This implies the union of Weyl chambers is exactly the flat, i.e. $[\varphi(F)] = \cup_{j=1}^p[W^j]$.
\end{proof}

\begin{proof}[Proof of Proposition \ref{1flatprop}]

We use same notations as in Lemma \ref{quantifylemma}, Corollary \ref{quancor}, and \ref{flatWeylcor}. As the result of Corollary \ref{flatWeylcor}, there are Weyl chambers $W^j$ in $Y$ vertex at $y$ for $j=1,\dots,p$ such that $[\varphi(F)]=\cup_{j=1}^p[W^j]$.

Since any two rays in $Y$ are either asymptotic or diverging linearly, and for any ray $r_\omega \subset [W^j]$ there is a ray $r\subset W^j$ such that $r_\omega =[r]$, we can conclude that $\{W^j(\infty)\}_{j=1}^p$ has the same adjacency relation as  $\{[W^j](\infty)\}_{j=1}^p$; Therefore $\cup_{j=1}^p W^j(\infty)$ is a biLischitz sphere in Tits metric, and containing exactly the same number of chambers as in an apartment. Hence $\cup_{j=1}^p W^j(\infty)$ is an apartment in $\partial Y$. Thus there is an apartment $F_y'\subset Y$ such that $F'(\infty) =\cup_{j=1}^p W^j(\infty)$. It follows easily that $[F']=[\cup_{j=1}^pW^j]=[\varphi(F)]$.

We now prove uniqueness. Let $x_1,x_2\in F$ such that the flat $F$ is sub-$\theta_\delta$-diverging w.r.t.~both $x_1$ and $x_2$. Note that $\Cone(Y,\varphi(x_1),c_n,\omega)$ and $\Cone(Y,\varphi(x_2),c_n,\omega)$ are canonically isometric by identity map on each sequence. So if in $\Cone(Y,\varphi(x_1),c_n,\omega)$, we have $[\varphi(F)]=\cup_{j=1}^p[W^j]$, then that equality will still hold true in $\Cone(Y,\varphi(x_2),c_n,\omega)$. Therefore the two flats $F_{\varphi(x_1)}'$ and $F_{\varphi(x_2)}'$ coincide.

So for every flat $F\in \mathcal{F}$, we can associate a unique flat $F'$ such that if $F$ is sub-$\theta_\delta$-diverging w.r.t.~$x$ then in $\Cone(Y,\varphi(x),c_n,\omega)$ we have $[\varphi(F)]=[F']$.
\end{proof}

Because of the assumption about Weyl patterns, we can treat $\partial X$ as a non-thick building with the Coxeter structure the one for $\partial Y$. So for $F$ sub-$\theta_\delta$-diverging flat w.r.t.~ $x\in F$, there are $p$ Weyl chambers $W_1,\dots,W_p$ vertex at $x$ with respect to the non-thick structure such that $F=\cup_{j=1}^p W_j$. Let $F'$ be the flat associated to $\varphi(F)$ as in Proposition \ref{1flatprop}, and $W^1,\dots,W^p$ be chambers vertex at $y$, projection of $\varphi(x)$ on $F'$, such that $F'=\cup_{j=1}^p W^j$ . By Proposition \ref{isoflat}, $[\varphi]$ isometrically map $[F]$ to $[F']$. Hence with appropriate order, we have that $[\varphi(W_j)]=[\varphi]([W_j])=[W^j]$ for all $j=1,\dots,p$.

\begin{corollary}\label{cham2cham}
For any $\epsilon>0$, there is $R_\epsilon >0$, independent of $F$, such that for all $j=1,\dots,p$, and for any $\rho>R_\epsilon$
\[\sup_{z\in \varphi(W_j)\cap S(y,\rho)}d(z,W^j)<\epsilon\rho.\]
\end{corollary}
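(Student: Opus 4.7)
The plan is to prove this by contradiction in a based asymptotic cone, using the chamber-to-chamber identification $[\varphi(W_j)] = [W^j]$ established immediately above the corollary. The single contradiction argument will directly deliver the uniformity in $F$, rather than first proving a pointwise statement and then upgrading.

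Suppose the conclusion fails. Then there exist $\epsilon_0 > 0$, a sequence of flats $F_n \in \mathcal{F}$ each sub-$\theta_\delta$-diverging with respect to some $x_n \in F_n$, chamber indices $j_n \in \{1, \dots, p\}$, radii $\rho_n \to \infty$, and points $z_n \in \varphi(W_{j_n, n}) \cap S(y_n, \rho_n)$, where $y_n = \varphi(x_n)$, such that
\[
d(z_n, W^{j_n, n}) \geq \epsilon_0 \rho_n.
\]
Here $W_{j_n, n}$ denotes the $j_n$-th Weyl chamber of $F_n$ at $x_n$ (in the non-thick building structure), and $W^{j_n, n}$ is the corresponding chamber at $y_n$ produced by Proposition \ref{1flatprop} together with the Gromov-Hausdorff construction preceding the corollary.

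Fix a nonprincipal ultrafilter $\omega$ and pass to the based asymptotic cone $\Cone(Y, y_n, \rho_n, \omega)$. The point $[z_n]$ satisfies $d_\omega([y_n], [z_n]) = 1$, so it is a genuine point of the cone; and by construction $[z_n] \in [\varphi(W_{j_n, n})]$. Applying Proposition \ref{isoflat} and Proposition \ref{1flatprop} to the varying sequence $(F_n, x_n)$, the image $[\varphi(F_n)]$ is a flat equal to $[F'_n]$, and the chamber-to-chamber identification gives $[\varphi(W_{j_n, n})] = [W^{j_n, n}]$ in this cone. Therefore $[z_n] \in [W^{j_n, n}]$, forcing $d_\omega([z_n], [W^{j_n, n}]) = 0$. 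But the hypothesis $d(z_n, W^{j_n, n}) \geq \epsilon_0 \rho_n$ translates in the cone to $d_\omega([z_n], [W^{j_n, n}]) \geq \epsilon_0 > 0$, a contradiction.

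The main obstacle is verifying that the chamber-containment $[\varphi(W_j)] \subseteq [W^j]$, previously derived with a \emph{single} flat $F$ and its associated data, transfers to a sequence $F_n$ in which the basepoints $x_n, y_n$, the rescaling $\rho_n$, and the chambers $W^{j, n}$ all vary simultaneously. I would check this by inspecting each step of the derivation — the existence of the $L$-biLipschitz map on the cone, the flat-to-flat conclusion of Proposition \ref{1flatprop}, and the Gromov-Hausdorff extraction producing $W^j$ — and confirming that every constant involved depends only on $L$, $C$, the function $\theta_\delta$, and the symmetric space structure of $Y$, none of which vary with $F_n$. With this uniformity in hand, the containment holds in the varying cone and the contradiction closes.
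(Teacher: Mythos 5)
Your proposal is correct and follows essentially the same route as the paper: a proof by contradiction extracting a sequence of flats $F_n$, radii $\rho_n\to\infty$, and witnesses $z_n$, then passing to the based cone $\Cone(Y,y_n,\rho_n,\omega)$ where the chamber identification $[\varphi(W_{j,n})]=[W_n^j]$ forces $d_\omega([z_n],[W_n^j])$ to be both $0$ and $\ge\epsilon$. The uniformity issue you flag is handled implicitly in the paper exactly as you propose, by noting that Proposition \ref{isoflat} is already stated for varying sequences of basepoints and flats.
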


\begin{proof}Suppose not, there is a sequence of flats $F_n$ with $F_n= \cup_{j=1}^p W_{j,n}$, where $W_{j,n}$ are chambers vertex at $x_n$; the sequence associated flats $F_n'=\cup_{j=1}^p W_n^j$ where $W_n^j$ are chambers vertex at $y_n$, the projection of $\varphi(x_n)$ on $F_n'$; and there are $R_n>0$ with $\lim\limits_{n\to\infty}R_n=\infty$, and $z_n\in \varphi(W_{j,n})\cap S(y,R_n)$ such that $d(z_n,W_n^j)\ge \epsilon R_n$. The flat $[F_n]$ in $\Cone(X,x_n,R_n,\omega)$ is mapped by a scaling factor of isometry $[\varphi]$ to the flat $[F_n']$ in $\Cone(Y,y_n,R_n,\omega)$. Thus, $[\varphi]([W_{j,n}])=[W_n^j]$. However $[z_n]\in [\varphi(W_j)]=[\varphi]([W_n^j]$ satisfies $d_\omega([z_n],[W_n^j])\ge \epsilon$. This is a contradiction.
\end{proof}

Next, we prove that $\varphi$ maps a large proportion of $F$ into a neighborhood of $F'$.

\begin{proposition}\label{Dcloseprop}
There exists $D(L,C,\delta)$ such that if $F\in\mathcal{F}$ is sub-$\theta_\delta$-diverging w.r.t.~ $x$, and $F'\subset Y$ is the flat associated with the image $\varphi (F)$, then $d(\varphi(x), F') < D$.
\end{proposition}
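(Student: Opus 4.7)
The plan is to reduce Proposition \ref{Dcloseprop} to a one-line application of Corollary \ref{cham2cham}. Fix any $\epsilon \in (0,1)$; for concreteness take $\epsilon = 1/2$. Let $R_\epsilon$ be the constant supplied by Corollary \ref{cham2cham}, which is \emph{uniform} over every sub-$\theta_\delta$-diverging flat $F \in \mathcal{F}$ and over every chamber $W_j$ of such a flat. The claim is that $D := R_{1/2}$ works, and all dependence of $D$ on $(L, C, \delta)$ is inherited from $R_\epsilon$.

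The argument itself is short. First I would let $y$ denote the nearest-point projection of $\varphi(x)$ onto $F'$, so that $\rho := d(\varphi(x), y) = d(\varphi(x), F')$. Since $x$ is the common vertex of the Weyl chambers $W_1, \dots, W_p$ whose union is $F$, the point $\varphi(x)$ lies in $\varphi(W_j)$ for every $j$, and of course $\varphi(x) \in S(y, \rho)$ by definition of $\rho$. Now argue by contradiction: suppose $\rho > R_\epsilon$. Then $\varphi(x) \in \varphi(W_j) \cap S(y, \rho)$ meets the hypothesis of Corollary \ref{cham2cham}, which gives $d(\varphi(x), W^j) < \epsilon \rho$. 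Since $W^j \subset F'$, this forces
\[
d(\varphi(x), F') \leq d(\varphi(x), W^j) < \epsilon\, d(\varphi(x), F'),
\]
contradicting $\epsilon < 1$. Therefore $d(\varphi(x), F') \leq R_\epsilon = D$.

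I do not anticipate a genuine obstacle here. The essential work, namely the uniformity of $R_\epsilon$ across all sub-$\theta_\delta$-diverging flats, has already been extracted in Corollary \ref{cham2cham} via a contradiction-in-the-asymptotic-cone argument leaning on Proposition \ref{isoflat}. What remains for Proposition \ref{Dcloseprop} is purely bookkeeping: one only has to observe that $\varphi(x)$ automatically lies on the image of some chamber $W_j$ based at $x$ and that the sphere through $\varphi(x)$ centered at the projection $y$ is exactly the sphere $S(y, \rho)$ to which Corollary \ref{cham2cham} applies. Both observations are immediate from the setup, and the contradiction closes the proof with $D$ being any effective value of $R_\epsilon$ for $\epsilon < 1$.
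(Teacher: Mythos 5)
Your argument is correct, and it reaches the conclusion by a genuinely shorter route than the paper's. You evaluate the uniform estimate of Corollary \ref{cham2cham} at the single point $z=\varphi(x)$, which indeed lies in $\varphi(W_j)\cap S(y,\rho)$ for $\rho=d(\varphi(x),y)=d(\varphi(x),F')$, and since $W^j\subset F'$ the resulting inequality $d(\varphi(x),F')<\epsilon\, d(\varphi(x),F')$ is absurd once $\rho>R_\epsilon$ (and $\rho>0$); so $D=R_{1/2}$ works. The paper does not use Corollary \ref{cham2cham} here at all: it runs a fresh compactness argument, taking a putative bad sequence $F_n$ with $c_n=d(\varphi(x_n),F_n')\to\infty$, rescaling by $c_n$, invoking Corollary \ref{quancor} to identify $[\varphi(F_n)]$ with a union of chambers $\cup_{j}[W_n^j]$ based at $y_n=\varphi(x_n)$ (not at the projection onto $F_n'$), and concluding from the fact that $\cup_j[W_n^j]$ and $[F_n']$ would be two distinct flats at Hausdorff distance $1$ in the cone. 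The trade-off is this: your proof transfers all of the analytic content into the clause ``independent of $F$'' in Corollary \ref{cham2cham}, whose own proof compares $\Cone(Y,\varphi(x_n),R_n,\omega)$ with $\Cone(Y,y_n,R_n,\omega)$ and hence tacitly requires $d(\varphi(x_n),y_n)=O(R_n)$ $\omega$-a.e.\ --- exactly the quantity that Proposition \ref{Dcloseprop} is meant to control. In your specific application this is harmless, because the bad sequence you would feed into that corollary's proof has $R_n=d(\varphi(x_n),y_n)$ by construction, so the two basepoint sequences stay at rescaled distance $1$ and the cones coincide; but you should say this explicitly, since otherwise a reader may suspect circularity. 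The paper's choice to base its chambers at $\varphi(x_n)$ itself is precisely how it avoids ever referring to the projection point whose distance from $\varphi(x_n)$ is the unknown.
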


\begin{proof}
Suppose not: then there exist $ F_n\in\mathcal{F}$, $x_n\in F_n$ such that $F_n$ is sub-$\theta_\delta$-diverging w.r.t.~$x_n$ and $c_n=d(x_n,F_n')\to \infty$ as $n\to \infty$, where $F_n'$ is flat in $Y$ associated to $\varphi(F_n)$ by Proposition \ref{1flatprop}. Denote $y_n =\varphi(x_n)$. Consider $[\varphi(F_n)]\subset \Cone (Y,y_n,c_n,\omega)$. Let $\cup_{j=1}^p W_n^j$ be the union of Weyl chambers vertices at $y_n$ such that $F_n'(\infty)=\cup_{j=1}^p W_n^j(\infty)$.

By Corollary \ref{quancor}, for any $\epsilon >0$, there is $R_\epsilon$ such that
\[\sup_{z\in C _\frac{\epsilon}{100}(y_n,\rho)}d(z, \cup_{j=1}^pW_n^j)\le 5\epsilon\rho,\]
for all $\rho\ge R_\epsilon$. Thus $[\varphi(F_n)]\subset\cup_{j=1}^p [W_n^j]$. But $[\varphi(F_n)]$ is a flat, and by the argument before, we get the equality $[\varphi(F_n)]=\cup_{j=1}^p [W_n^j]$. Thus $\cup_{j=1}^p [W_n^j]$ is a flat.

We have $d_{Hau}(W_n^j,F_n')=d(y_n,F_n')=c_n$ for all $j=1,\cdots,p$, implies that $d_{Hau}(\cup_{j=1}^p [W_n^j], [F_n'])=1$. Note that $[F_n']$ is also a flat. So we have two flats have Hausdorff distance 1 from each other. This is a contradiction.
\end{proof}

\section{Measurable boundary map and continuity on stars}
\label{section:firstcontinuity}

So far, we have associated to each flat $F\in\mathcal{F}$ a flat $F'$ in $Y$. We now want to consider the correspondence at the level of Weyl chambers. Let $y=\varphi(x)$, consider the map $[\varphi] |_{F_\omega}: F_\omega\to F_\omega'\subset \Cone(Y,y,c_n,\omega)$. Up to rescaling a factor, $[\varphi] |_{F_\omega}$ is an isometry preserving the Weyl chamber pattern. So $[\varphi] |_{F_\omega}$ maps each Weyl chamber to a finite union of chambers in $F'_\omega$. There is an obvious correspondence between Weyl chambers in $F_\omega(\infty)$ (respectively $F'_\omega(\infty)$) and Weyl chambers in $F(\infty)$ (respectively $F'(\infty)$). Therefore, $\varphi$ associates each chamber in $F(\infty)$ with a finite union of chambers in $F'(\infty)\subset \partial Y$.

Let $\overline{\Omega}$ be the set of Weyl chambers at infinity of flats in $\mathcal{F}$. Then $\overline{\Omega}\cap \overline{K}$ has full measure in $\overline{K}$, where $\overline{K}$ is the Furstenberg boundary of $X$.

Let $W\in \overline{\Omega}\cap \overline{K}$. If $F_1, F_2\in\mathcal{F}$ are two flats that contain $W$ in their boundaries, i.e. $W\subset F_1(\infty)$, $W\subset F_2(\infty)$. There exists $F_1', F_2'$ such that
\[[\varphi] ([F_1]) =[\varphi(F_1)]=[F_1'],\]
\[[\varphi] ([F_2]) =[\varphi(F_2)]=[F_2'].\]
Note that the map $\varphi |_{F_1\cup F_2}$ yields a well defined, biLipschitz map $[\varphi]$ on $[F_1]\cup [F_2]$. $[F_1]\cap[F_2]$ contains a Weyl chamber sector corresponding to $W$. The image of the sector under $[\varphi]$ is a finite union of chambers in $[F_1']\cap[F_2']$. Therefore, the corresponding Weyl chambers at infinity in $F_1'(\infty)$ and $F_2'(\infty)$ coincide. So we can set a correspondence
\[\overline{\varphi}(W)=(W_1', \cdots W_l').\]
In order to get a consistent way to map a Weyl chamber to a finite union of chambers, we will do as following: let $W_1$ be an arbitrary Weyl chamber in $E\cap \overline{K}$. There is a chamber $W_2\in \overline{\Omega}\cap \overline{K}$ such that there exist $ F_1,F_2\in \mathcal{F}$, and $W,W_2\subset F_1(\infty)$, $W_1,W_2\subset F_2(\infty)$. Note $[\varphi] |_{[F_1]\cup [F_2]}$ is an isometry up to a rescaling factor. There is a composition of reflections in walls of $[F_1](\infty)\cup[F_2](\infty)$ that carries $W_1$ to $W$. So there is a corresponding composition of reflections in $[F_1'](\infty)\cup [F_2'](\infty)$ carries a finite union of Weyl chambers corresponding to image of chamber $W_1$ to the finite union of chambers $W_1'\cup\dots\cup W_l'$ corresponding to image of $W$. Thinking of this as a way to label $\{1,\dots,l\}$ to finite union of chambers in the image of each chamber so that the the labeling is invariant under the induced action of Coxeter group for $\partial X$ on building $\partial Y$. Therefore, we can define a map $\overline{\varphi}$ on $\overline{\Omega}\cap \overline{K}$:
\[\overline{\varphi}: \overline{\Omega}\cap \overline{K} \to \overline{K'}\times \cdots\times \overline{K'}\]
sending each Weyl chamber to a $l$-tuple of Weyl chambers in a consistent way. We will assume that $\overline{\varphi}=(\varphi_1,\cdots,\varphi_l)$.

The rest of this section is for proving uniform continuity of $\overline{\varphi}$ on star chambers. For this we need a version of proposition \ref{Dcloseprop} for hyperplanes.

\begin{proposition}\label{hyperprop}
For a hyperplane $P=\pi(gA_\alpha)$, $g\in \Omega_\delta$, and $\alpha\in \Xi$,  there is a hyperplane $P'$ in $Y$ such that
\begin{itemize}
\item in $\Cone(Y,y,c_n,\omega)$: $[\varphi (P)]=[P']$.
\item there exists $ d(\delta,L,C,X,\Gamma)$ such that for any $z=\pi(u)$. where $u\in gA_\alpha \cap \Omega_\delta$, we have $d(\varphi(u), P')<d$. Here $d$ is independent with $P$.
\end{itemize}
\end{proposition}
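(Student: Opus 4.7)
The proposition packages hyperplane analogues of Propositions \ref{1flatprop} and \ref{Dcloseprop} together, and my plan is to deduce it from those flat statements applied to the ambient flat $F=\pi(gA)$ containing $P$ and to the transverse partner flat $F_\alpha=\pi(gk_\alpha A)$, which meets $F$ exactly along $P$.

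First, I would construct the hyperplane $P'$. Since $g\in\Omega_\delta$, the standing estimates \eqref{refinedistance} on the union $F\cup F_\alpha$ give sub-$\theta_\delta$-diverging behavior at $\pi(g)$ for both flats, so Proposition \ref{1flatprop} furnishes unique associated flats $F'$ and $F_\alpha'$ in $Y$, each within distance $D$ of $y=\varphi(\pi(g))$ by Proposition \ref{Dcloseprop}. Working in any asymptotic cone $\Cone(Y,y,c_n,\omega)$, the biLipschitz extension $[\varphi]$ on $[F]\cup[F_\alpha]$ satisfies $[\varphi]([F])=[F']$ and $[\varphi]([F_\alpha])=[F_\alpha']$, hence
\[[\varphi(P)]=[\varphi]([F]\cap[F_\alpha])\subset[F']\cap[F_\alpha'].\]
Since $[\varphi(P)]$ is a biLipschitz image of the hyperplane $[P]$, it has codimension one in $[F']$, forcing $F'\cap F_\alpha'$ (and not merely its image in the cone) to be a hyperplane of $Y$; I would take $P':=F'\cap F_\alpha'$. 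The equality $[\varphi(P)]=[P']$ in the cone is then immediate from dimension matching.

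Second, I would establish the uniform distance bound. Any $u\in gA_\alpha\cap\Omega_\delta$ satisfies $\pi(u)\in P\subset F\cap F_\alpha$, and the flats $\pi(uA)=F$ and $\pi(uk_\alpha A)=F_\alpha$ remain sub-$\theta_\delta$-diverging with respect to $\pi(u)$, so Proposition \ref{Dcloseprop} applied at $\pi(u)$ yields both $d(\varphi(\pi(u)),F')\le D$ and $d(\varphi(\pi(u)),F_\alpha')\le D$. The flats $F'$ and $F_\alpha'$ meet along $P'$ at an angle bounded below by a positive constant controlled by the Weyl chamber pattern of $Y$ and independent of the particular hyperplane $P$ (only finitely many conjugacy types of transverse pair arise), so standard Euclidean geometry upgrades these two bounds to a uniform estimate $d(\varphi(\pi(u)),P')=d(\varphi(\pi(u)),F'\cap F_\alpha')<d$ with $d=d(\delta,L,C,X,\Gamma)$.

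The main obstacle is justifying that $F'\cap F_\alpha'$ is in fact a single hyperplane of $Y$: two maximal flats in a symmetric space can a priori intersect in a complicated way, and one must upgrade the cone-level intersection to an actual codimension-one affine intersection in $Y$. I would handle this via the Kleiner--Leeb structure of flats from \cite{KL2} already invoked elsewhere in the paper: the agreement of the boundary apartments $F'(\infty)$ and $F_\alpha'(\infty)$ along a codimension-one face, combined with the bounded-distance bounds $d(y,F'),d(y,F_\alpha')\le D$, pins $F'\cap F_\alpha'$ down to a single affine hyperplane. A secondary subtlety is ensuring that the transverse partner $F_\alpha$ lies in $\mathcal{F}$ so that Proposition \ref{1flatprop} applies to it; this is arranged by incorporating the translated conditions $gk_\alpha\in\mathcal{G}$ into the full-measure intersection defining $\mathcal{G}$, analogous to what was already done when constructing $\Omega_\delta'$.
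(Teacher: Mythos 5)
Your overall setup --- working with the transverse pair $F=\pi(gA)$, $F_\alpha=\pi(gk_\alpha A)$ and their associated flats $F'$, $F'_\alpha$ supplied by Propositions \ref{1flatprop} and \ref{Dcloseprop} --- is the same as the paper's, but both of the steps you yourself flag as delicate contain genuine gaps. First, you set $P':=F'\cap F'_\alpha$ and assert that Kleiner--Leeb plus the bounds $d(y,F')\le D$, $d(y,F'_\alpha)\le D$ ``pin this down to a single affine hyperplane.'' But two maximal flats of $Y$ whose boundary apartments share exactly a wall need not intersect at all (translate one of them by a unipotent element fixing that wall: the boundary data and the distance-to-$y$ bounds persist while the flats become disjoint), so $F'\cap F'_\alpha$ may be empty and the ``dimension matching'' has nothing to match. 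The paper never claims the flats meet: it only produces a hyperplane $P'$ with $P'\subset\Nbhd_D(F'_1)\cap\Nbhd_D(F'_2)\subset\Nbhd_d(P')$, i.e.\ a hyperplane coarsely realizing the intersection of the $D$-neighborhoods, not the literal intersection of the flats.

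Second, and more seriously, the uniformity of $d$ --- the real content of the second bullet --- rests in your write-up on the claim that $F'$ and $F'_\alpha$ meet at an angle bounded below because ``only finitely many conjugacy types of transverse pair arise.'' That is false: pairs of maximal flats sharing a given wall at infinity form continuous families (parametrized by pairs of geodesics in the cross-section of the parallel set of the wall), there is no finiteness of types and no a priori positive lower bound on any angle; in general $\Nbhd_D(F'_1)\cap\Nbhd_D(F'_2)$ for two flats through a fixed ball can be a slab of arbitrarily large width around a hyperplane. The uniformity is not a fact about configurations of flats alone --- it uses the map. The paper proves it by contradiction: if $d_n\to\infty$, rescaling by $d_n$ yields a point of $[F'_{1,n}]\cap[F'_{2,n}]$ at cone-distance $1$ from $[P'_n]$, so this intersection contains a strip of positive width; but it equals the image of the hyperplane $[P_n]$ under the rescaled isometry $[\varphi]$ on $[F_{1,n}]\cup[F_{2,n}]$, hence is codimension one --- a contradiction. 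To close your argument you would need either this ultralimit step or a genuinely new geometric input (e.g.\ hyperbolicity of the cross-section of the wall's parallel set combined with control of the pair at infinity), neither of which is supplied by the angle heuristic as written.
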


\begin{proof}
Let $F_1=\pi(gA)$, $F_2=\pi(gk_\alpha A)$ Then $F_1, F_2\in \mathcal{F}$, $F_1\cap F_2 =P$,  and there is $c$ such that $\Nbhd_1(F_1)\cap \Nbhd_1(F_2)\subset \Nbhd_c(P)$.
Then there exist $F_1',F_2'\subset Y$ such that $[\varphi(F_1)] = [F_1']$ and $[\varphi(F_2)]=[F_2']$ in $\Cone(Y,y,c_n,\omega)$. $[F_1]$ and $F_2]$ are two flats whose intersection is exactly $[P]$. $[\varphi] |_{[F_1]\cup [F_2]}$ is a biLipschitz map. Thus $[F_1']\cap [F_2']$ is exactly a codimension $1$ hyperplane. Also notice that $d(F_1',F_2') < 2D$ since there is $x\in P$ such that $d(F_i',\varphi(x))<D$. This implies there exists a hyperplane $P'\subset Y$, and there exists $ d>0$ such that $P'\subset \Nbhd_D(F_1')\cap \Nbhd_D(F_2')\subset \Nbhd_d(P')$. It follows that $[P']=[\varphi(P)]$. And for $z=\pi(u)$ with $u\in gA_\alpha\cap\Omega_\delta$ then $\varphi(z)\in \Nbhd_D(F_1') \cap \Nbhd_D(F_2') \subset \Nbhd_d(P')$.

We need to show that $d$ is bounded, and does not depend on $P$. Suppose not, then there exist $P_n=\pi(g_nA_\alpha)$, $F_{1,n}=\pi(g_nA)$, $F_{2,n}=\pi(g_nk_\alpha A)$, and there is $d_n\to\infty$ such that  $\Nbhd_{d_n}(P_n')\subset \Nbhd_D(F_{1,n}')\cap \Nbhd_D(F_{2,n}')$, where $g_n\in\Omega_\delta$, and $P_n', F_{1,n}',F_{2,n}'$ are hyperplanes associated to $P_n, F_{1,n},F_{2,n}$. Let $x_n=\pi(g_n)$. In $\Cone(X,x_n, d_n,\omega)$, $[F_{1,n}]\cap[F_{2,n}]$ is a codimension $1$ hyperplane. In $\Cone(Y,\varphi(x_n),c_n,\omega)$, the codimension $1$ hyperplane $[P_n']$ is contained in the intersection $[F_{1,n}'\cap F_{2,n}']$. Moreover  there are $z_n'\in \Nbhd_{d_n}(P_n')$ that have $d(z_n', P_n')=d_n$ and $d(z_n', F_{i,n}')\le D$. Therefore the point $[z_n']$ is contained in the intersection $[F_{1,n}']\cap [F_{2,n}']$, and this point is also distinct from $[L_n']$ since $d([z_n],[P_n'])=1$. Hence $[F_{1,n}]\cap [F_{2,n}]$ contains a strip with positive width containing the hyperplane $[P'_n]$. However this strip is the image of hyperplane $[P_n]$ under an isometry (up to a rescaling factor) $[\varphi]$. This is a contradiction.
\end{proof}

Recall that $\mathcal{G}$ is a full measure subset of $G$ such that for any $F\in\mathcal{F}$, there is $g\in \mathcal{G}$ such that $F=\pi(gA)$.

\begin{lemma}[Fubini's theorem]\label{Fubini}
Let $G$ be a group. $H$ be a subgroup and E a full measure subset of $G$. Then for a.e. $g\in E$, we have that $gh\in E$ for a.e. $h\in H$.
\end{lemma}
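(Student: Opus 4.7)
The strategy is to deduce this directly from the disintegration of Haar measure on $G$ with respect to the right coset projection $\pi:G\to G/H$. Since $H$ is a closed subgroup of a locally compact group, there exists a quasi-invariant Radon measure $\nu$ on $G/H$ (Weil's measure) together with a disintegration formula
\[\int_G f\,d\mu_G \;=\; \int_{G/H}\Big(\int_H f(g_0 h)\,d\mu_H(h)\Big)\,d\nu(g_0 H)\]
valid for every nonnegative measurable $f$, where $g_0$ denotes any lift of the coset $g_0 H$. Up to a positive density factor coming from quasi-invariance this is plain Fubini; and since the statement is only about null sets, that density is harmless.

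The argument then proceeds in two short steps. First I would apply the formula with $f=\mathbf{1}_{G\setminus E}$: since $\mu_G(G\setminus E)=0$ the inner integral must vanish for $\nu$-a.e.\ coset, so there is a $\nu$-null set $B\subset G/H$ such that for every coset $g_0 H\notin B$ the fiber $\{h\in H: g_0 h\notin E\}$ is $\mu_H$-null. Put
\[N := \{g\in G : gh\in E\text{ for } \mu_H\text{-a.e.}\ h\in H\};\]
then $N$ is a union of right $H$-cosets, and the previous sentence says $G\setminus N\subset \pi^{-1}(B)$. Second, to see that $\pi^{-1}(B)$ is $\mu_G$-null, I would exhaust $G$ by compact sets $K_n$ and apply the disintegration formula to $\mathbf{1}_{K_n\cap \pi^{-1}(B)}$: the integrand is supported in the preimage of $B$, so each such intersection is $\mu_G$-null, hence so is $\pi^{-1}(B)$. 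Combining the two steps gives $\mu_G(G\setminus N)=0$, and since $E$ itself has full measure this means that $\mu_H$-almost every right translate $gh$ of $\mu_G$-almost every $g\in E$ lies in $E$, which is the claim.

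There is no serious obstacle here: the only technical caveat is that when $G/H$ carries no genuinely $G$-invariant measure one must use Weil's quasi-invariant version, but the null-set conclusion is unaffected by the density. In the applications within the paper $G$ is a semisimple Lie group and the subgroups $H$ that arise, such as the Cartan $A$ or a unipotent $U_\Delta$, are unimodular, so the classical form of Fubini applies verbatim.
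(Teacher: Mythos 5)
Your proof is correct. Note that the paper does not actually prove this lemma; it simply cites Dru\c{t}u's Lemma 5.1.1, so you have supplied an argument where the paper gives only a reference. Your route via Weil's disintegration formula for a closed subgroup is the standard one and is essentially the content of the cited lemma: the two steps (the inner integral of $\mathbf{1}_{G\setminus E}$ vanishes for $\nu$-a.e.\ coset, and the saturation $\pi^{-1}(B)$ of a $\nu$-null set is $\mu_G$-null) are exactly what the disintegration is designed to deliver, and you correctly observe that the $\rho$-density appearing in the non-unimodular case is strictly positive and therefore irrelevant for null-set statements. Two small points worth being aware of rather than genuine gaps: the inner integral is well defined on cosets because left Haar measure on $H$ is invariant under $h\mapsto h_0h$ (this is also what makes your set $N$ a union of cosets $gH$), and the lemma as stated in the paper suppresses the hypotheses actually needed (that $G$ is a $\sigma$-compact locally compact group and $H$ a closed subgroup), which hold in every application in the paper, where $H$ is a compact, unipotent, or Cartan subgroup of a semisimple Lie group.
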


\begin{proof}See \cite[Lemma 5.1.1]{Dru}.
\end{proof}

By Fubini, for a.e. $g\in \mathcal{G}$, then $gk\in\mathcal{G}$ for a.e. $k\in K_\alpha$, for some $\alpha\in\Xi$. This is equivalent with saying that for almost every hyperplane of the form $P=\pi(gA_\alpha)$, almost every flat containing $P$ is in the family $\mathcal{F}$. Let $M$ be an arbitrary face in the building $\partial X$, denote $\Star(M)\subset \overline{K}$ be all the Weyl chambers containing $M$. If $M$ is a face in $P(\infty)$ where $P$ is the above hyperplane, then we see that $\Star(M)\cap\overline{\Omega}$ has full measure in $\Star(M)$. More precise, $\Star(M)\cap\overline{\Omega}$ contains a full measure subset of $\Star(M)$, that full measure subset consists of chambers that are in apartments bounding flats in $\mathcal{F}$ that contains the hyperplanes $P$. By Fubini again, almost every faces in $\partial X$ are faces with described properties of $M$.

For such a face $M$ as above, let $P$ be the hyperplane such that almost every flat containing $P$ is in $\mathcal{F}$, and $M\subset P(\infty)$. Assume that $P=\pi(gA_\alpha)$, then flat $\pi(gA)$ is sub-$\theta_\delta$-diverging w.r.t.~~ a large portion of points in $P$. Therefore we can assume that $g\in \Omega_\delta$. By proposition \ref{hyperprop}, there is a hyperplane $P'$ associated to $\varphi(P)$. For each Weyl chamber $E\in \Star(M)\cap\overline{\Omega}$, among $\varphi_1(E), \cdots, \varphi_l(E)$ there is (at least) one Weyl chamber adjacent to $P'(\infty)$. Without loss of generality, assume that is $\varphi_1(W)$.

\begin{proposition}\label{unicts}
The map $\varphi_1: \Star(M)\cap \overline{\Omega} \to \overline{K'}$ is uniformly continuous on a full measure subset. Moreover the extension map to $\Star(M)$ is injective.
\end{proposition}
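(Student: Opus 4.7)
The plan is to parametrize the chambers in $\Star(M)$ by the space of half-flats of $X$ bounded by $P$ in the direction of $M$, and to exploit the fact that this space carries a natural Gromov hyperbolic structure (it is acted on transitively by a subgroup of the Levi factor of the parabolic stabilizing $P$, with compact stabilizers). Under this identification, $\Star(M)$ sits as (a subset of) the visual boundary of the hyperbolic parametrizing space, and the idea is to show that $\varphi$ induces a quasi-isometric embedding into the analogous parametrizing space for $P'$ in $Y$, from which continuity and injectivity of $\varphi_1$ follow by the Mostow--Morse style boundary map argument familiar from real hyperbolic rigidity.

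Concretely, I would fix a basepoint $p\in P$ and, for each chamber $E\in\Star(M)\cap\overline{\Omega}$, let $H_E\subset X$ denote the half-flat bounded by $P$ that contains the Weyl sector at $p$ with boundary $E$. Combining Proposition \ref{hyperprop}, Proposition \ref{1flatprop} and Corollary \ref{cham2cham}, $\varphi(H_E)$ is at sublinearly growing Hausdorff distance from a half-flat $H_E'\subset Y$ bounded by $P'$ whose boundary chamber adjacent to $P'(\infty)$ is precisely $\varphi_1(E)$. For two chambers $E_1,E_2\in\Star(M)\cap\overline{\Omega}$, the biLipschitz estimates on images of flats sharing $P$ give the required fellow-traveling principle: if $d_{\overline{K}}(E_1,E_2)<\epsilon$, then $H_{E_1}$ and $H_{E_2}$ lie within uniformly bounded Hausdorff distance on the ball of radius $R(\epsilon)$ around $p$, with $R(\epsilon)\to\infty$ as $\epsilon\to 0$. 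Applying the QI-embedding bounds on the good portions of these half-flats, the images $H_{E_1}'$ and $H_{E_2}'$ fellow-travel on a ball of radius $L^{-1}R(\epsilon)-O(1)$ around $\varphi(p)$. Since two half-flats of $Y$ bounded by $P'$ that fellow-travel on a large ball must have boundary chambers close in $\overline{K'}$, this yields uniform continuity of $\varphi_1$ with modulus depending only on $L$, $C$, $\delta$, and hence a continuous extension to all of $\Star(M)$.

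For injectivity of this extension, distinct $E_1\ne E_2\in\Star(M)$ give half-flats $H_{E_1},H_{E_2}\subset X$ that diverge linearly from each other by CAT(0) geometry inside any flat containing $P$; applying the lower QI-embedding bound to points deep inside $H_{E_1}$ and $H_{E_2}$ coming from good flats, the images diverge linearly in $Y$, so the associated boundary chambers $\varphi_1(E_1),\varphi_1(E_2)$ must differ. Combined with the uniform continuity above, this gives an injective continuous extension on $\Star(M)$.

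The main obstacle I anticipate is bookkeeping around the multivaluedness of $\overline{\varphi}$: the value $\overline{\varphi}(E)$ is a tuple $(W_1',\dots,W_l')$ rather than a single chamber, so one must verify that the selection of the component $\varphi_1(E)$ adjacent to $P'(\infty)$ is consistent as $E$ varies, and that the estimates for $\overline{\varphi}$ (which control the tuple as a whole) really control each component individually. This is handled by Proposition \ref{hyperprop}, which pins down $P'$ once and for all: once $P'$ is fixed, $\varphi_1(E)$ is canonically characterized as the unique component of $\overline{\varphi}(E)$ adjacent to $P'(\infty)$, and the analytic estimates for $\overline{\varphi}$ descend cleanly to the single component $\varphi_1$.
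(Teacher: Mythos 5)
Your proposal is correct and follows essentially the same route as the paper: fix $P'$ once via Proposition \ref{hyperprop}, select the component of $\overline{\varphi}$ adjacent to $P'(\infty)$, translate closeness in the Furstenberg metric into fellow-traveling of the sectors adjacent to $P$ on large balls, push this through the QI estimates on good flats and Corollary \ref{cham2cham}, and use the lower QI bound for injectivity. The paper's written proof phrases this directly in terms of Hausdorff distances of (non-thick) chambers on balls and the visual metric rather than through an explicit Gromov-hyperbolic parametrizing space, but that framing is exactly the motivation the authors themselves give in the introduction, and the quantitative content is the same.
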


\begin{proof}
The full measure subset $U$ of $\Star(M)$ where we are proving continuity is the subset consisting of chambers in apartments at infinity of  flats in $\mathcal{F}$ containing $P$.

For $W_1(\infty),W_2(\infty)\in U$ are chambers at infinity of $W_1,W_2$ vertex at $x$, we let $F_1,F_2$ be two flats in $\mathcal{F}$ containing hyperplane $P$ and two chambers $W_1, W_2$.

By the way we choose $P$ and $U$, $F_1$ and $F_2$ are sub-$\theta_\delta$-diverging w.r.t.~ a large portion of points in $P$. Let $x_1\in P$ be such a point, and let $W_3,W_4$ be Weyl chambers at $x_1$ that have $W_3(\infty)=W_1(\infty)$, $W_4(\infty)=W_2(\infty)$. Then for any $R>0$, $d_{Hau}(W_1\cap B(x,R),W_2\cap B(x,R))=d_{Hau}(W_3\cap B(x_1,R), W_4\cap B(x_1,R))$, and we have that $F_1,F_2$ are sub-$\theta_\delta$-diverging w.r.t.~ $x_1$.

Let the hyperplane $P'$ and flats $F_1',F_2'$ be the hyperplane and flats associated to $\varphi(P)$, and $\varphi(F_1), \varphi(F_2)$. By proposition \ref{hyperprop} $P'$ is determined up to some fixed finite Hausdorff neighborhood, hence projection of a point on $P'$ is also well-defined up to a finite distance. Let $y_1$ be the projection of $\varphi(x_1)$ to $P'$, and let $W_3'\subset F_1'$, $W_4'\subset F_2'$ be chamber vertex at $y_1$ such that
\[\varphi_1(W_1(\infty))=W_3'(\infty),\]
\[\varphi_1(W_2(\infty))=W_4'(\infty).\]

We also denote $y$ the projection of $\varphi(x)$ on $P'$, and $W_1', W_2'$ be chambers vertex $y$ with $W_1'(\infty)=W_3'(\infty)$, $W_2'(\infty)=W_4'(\infty)$. Note that $y$ is fixed (up to a finite distance), independent of chambers in $\Star(M)$.

Consider $\partial X$ with non-thick building structure induced from $\partial Y$, by corollary \ref{cham2cham}, there are non-thick chambers $V_1\subset W_3, V_2\subset W_4$ vertex at $x_1$ such that $\varphi(V_1)$, $\varphi(V_2)$ are asymptotic to $W_3', W_4'$ respectively.

Note that distance on Furstenberg boundary is biLipschitz equivalent with the visual metric at some base point. In other words, for any $\delta_0>0$, and any $R>0$, there exist $ \delta_1,\delta_2>0$ such that if $d_{\overline{K}}(W_1(\infty),W_2(\infty))<\delta_1$ (resp. $>\delta_2$) then $d_{Hau}(W_1\cap B(x,R), W_2\cap B(x,R)) < \delta_0 R$ (resp. $ >\delta_0 R$), where $W_1,W_2$ are Weyl chambers vertex at $x$ that have $W_1(\infty), W_2(\infty) \in \Star(M)$. Because $V_1, V_2$ are non-thick chambers, adjacent to $P$, there are $\delta_3, \delta_4$ such that if $d_{\overline{K}}(W_1(\infty),W_2(\infty))<\delta_3$ (resp. $>\delta_4$) then $d_{Hau}(V_1\cap B(x_1,R), V_2\cap B(x_1,R)) < \delta_0 R$ (resp. $ >\delta_0 R$).

By the estimate (\ref{refinedistance}),
\begin{align*}
d_{Hau}(\varphi(V_1)\cap B(y_1, L^{-1}(1-\beta(R))R-C),\varphi(V_2)\cap B(y_1, L^{-1}(1-\beta(R))R-C))\\< L(\delta_0 R+\beta(R)R)+C.\end{align*}
By corollary \ref{cham2cham}, if $L^{-1}(1-\beta(R))-C>R_\epsilon$, then
\begin{align*}
d_{Hau}(W_3'\cap B(y_1, L^{-1}(1-\beta(R))R-C),W_4'\cap B(y_1, L^{-1}(1-\beta(R))R-C))\\< L(\delta_0 R+\beta(R)R)+C +\epsilon (L^{-1}(1-\beta(R))-C).
\end{align*}
Choosing $R$ large enough, we can rewrite:
\[d_{Hau}(W_3'\cap B(y_1, \frac{1}{2}L^{-1}R),W_4'\cap B(y_1, \frac{1}{2}L^{-1}R))<\delta_0' R,\]
for some $\delta_0'$ deduced from above inequality. Thus
\[d_{Hau}(W_1'\cap B(y, \frac{1}{2}L^{-1}R),W_2'\cap B(y, \frac{1}{2}L^{-1}R))<\delta_0' R.\]
Again, using the equivalence of $d_{\overline{K'}}$ and the visual metric at $y$, there exists $\delta_2'$ such that if $d_{\overline{K'}}(V_1'(\infty),V_2'(\infty))>\delta_2'$ then $d_{Hau}(V_1'\cap B(y, \frac{1}{2}L^{-1}R),V_2'\cap B(y, \frac{1}{2}L^{-1}R))>\\ \delta_0' R$, where $V_1',V_2'$ are chambers in $Y$ vertex at $y$. Therefore, we conclude that \\$d_{\overline{K'}}(W_1'(\infty),W_2'(\infty))<\delta_2'$. This is equivalent with saying that for any $\delta_2'>0$ we can find $\delta_3>0$ such that for any pair of chambers in the full measure subset $U$ at $d_K$-distance at most $\delta_3$ then the image chambers under $\varphi_1$ are at $d_{\overline{K'}}$-distance $\delta_2'$. Hence we get the continuity of $\varphi_1$ on a full measure subset of $\Star(M)$.

To prove the injectivity of the extension map we repeat above argument for lower bound estimate and get
\begin{align*}
d_{Hau}(W_1'\cap B(y, 2R),W_2'\cap B(y, 2LR)&=
d_{Hau}(W_3'\cap B(y_1, 2R),W_4'\cap B(y_1, 2LR))\\&> L^{-1}(\delta_0 R-\beta(R)R)-C - 2\epsilon R =\delta_0'' R.
\end{align*}
Arguing as before we get for an $\delta_1'>0$, there is $\delta_4$ such that if $d_{\overline{K'}}(\varphi(W_1(\infty)),\varphi_1(W_2(\infty)))\\ <\delta_1'$ then $d_{\overline{K}}(W_1(\infty),W_2(\infty))<\delta_4$ for any $W_1(\infty),W_2(\infty)\in U$. This implies the injectivity of the extension map.
\end{proof}

\begin{corollary}
$\overline{\varphi}: \Star(M)\cap \overline{\Omega} \to \prod\limits_{i=1}^l \overline{K'}$ is uniformly continuous on a full measure subset.
\end{corollary}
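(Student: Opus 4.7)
The strategy is to observe that Proposition \ref{unicts} already handles the component $\varphi_1$, and that nothing in that proof is special to the index $1$: the same argument applies to every $\varphi_i$, $i=1,\dots,l$. Once each component is uniformly continuous on a full measure subset, equipping $\prod_{i=1}^l \overline{K'}$ with the supremum metric and intersecting the $l$ full measure subsets (which is again full measure since $l$ is finite) yields uniform continuity of $\overline{\varphi}$ itself.

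To run the argument for a given $i$, I would re-pick the hyperplane used in the setup of Proposition \ref{unicts}. In the original proof one chose a hyperplane $P\supset M$ such that almost every flat through $P$ lies in $\mathcal{F}$, and $\varphi_1(W)$ was singled out as the image chamber adjacent to the associated hyperplane $P'$. For $\varphi_i$, I would instead pick a hyperplane $P_i$ through $M$ such that the chamber $W_i$ of $F$ producing the $i$-th labeled image under $\overline{\varphi}$ is adjacent to $P_i$. Such a $P_i$ exists on a full measure subset of $\Star(M)$ by Fubini (Lemma \ref{Fubini}) applied to each wall type of a Weyl chamber containing $M$, exactly as in the paragraph preceding Proposition \ref{unicts}. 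Proposition \ref{hyperprop} then furnishes the associated hyperplane $P_i'$ in $Y$, and by construction $\varphi_i(W)$ is the component of $\overline{\varphi}(W)$ adjacent to $P_i'$.

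With $P_i, P_i'$ in place, the entire argument of Proposition \ref{unicts}, from the construction of $F_1,F_2,W_3,W_4$ down to the final use of the biLipschitz equivalence between the Furstenberg metric and the visual metric, transfers verbatim with $\varphi_i$ in place of $\varphi_1$ and $(P_i,P_i')$ in place of $(P,P')$. The estimate relies only on the biLipschitz-isometry-up-to-scale property of $[\varphi]$ on $[F_1]\cup[F_2]$ and on Corollary \ref{cham2cham}, neither of which distinguishes which labeled chamber we track. The resulting modulus of continuity depends on $L, C, \delta$ and the root data but not on $i$. The one point that requires care, but which I expect to be purely bookkeeping, is that the labeling is compatible with the various choices of $P_i$; this is automatic because the labeling was explicitly set up to be invariant under the induced action of the Coxeter group of $\partial X$ on $\partial Y$, and that action is precisely what permutes the $l$ chambers of a single image flat, so the adjacency relation between $\varphi_i(W)$ and $P_i'$ is preserved as $W$ varies through the full measure subset.
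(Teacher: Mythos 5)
Your reduction to ``run Proposition \ref{unicts} once per component'' does not work, and the obstruction is precisely why this corollary needs its own proof. The argument of Proposition \ref{unicts} is not symmetric in the index $i$: it crucially uses that the tracked image chamber is adjacent to the fixed image wall $P'(\infty)$, so that for $W_1,W_2\in\Star(M)$ the image chambers have (coarsely) a common vertex $y$ on $P'$ and a common face $\overline{\varphi}(M)\subset P'(\infty)$, which is what makes the Hausdorff comparison of $W_1'\cap B(y,R)$ and $W_2'\cap B(y,R)$ meaningful. For a component $\varphi_i$ whose image chamber is not adjacent to $P'(\infty)$, your proposed fix --- choosing a different hyperplane $P_i$ ``through $M$'' --- is not available: inside any flat $F$ with $M\subset F(\infty)$ the panel $M$ spans a unique wall of the Weyl pattern of $X$, namely $P(\infty)$, so there is no second hyperplane of $X$ whose boundary contains $M$. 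The walls of $\partial Y$ that bound $\varphi_i(W)$ and lie in the interior of the admissible chamber $\overline{\varphi}(W)$ are not images of walls of $\partial X$ at all (this is the whole point of the subCoxeter formalism), so Proposition \ref{hyperprop} and the ``hyperbolic space of flats through a hyperplane'' mechanism do not apply to them; and the admissible walls through other faces $N\neq M$ of $W$ would only give continuity on $\Star(N)$, not on $\Star(M)$. Nor does continuity of the adjacent component propagate to $\varphi_i$ by adjacency alone: the panel along which $\varphi_i(W)$ meets $\varphi_1(W)$ varies continuously, but in a thick building that panel has many chambers in its star, so this does not pin down $\varphi_i(W)$.

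The paper closes this gap with an apartment-rigidity argument that is absent from your proposal. One applies Proposition \ref{unicts} twice, on $\Star(M)$ and on $\Star(M^{op})$ for the opposite face $M^{op}\subset P(\infty)$; by the consistency of the labelling, $\varphi_1(W)$ and $\varphi_1(W^{op})$ are opposite chambers adjacent to $P'(\infty)$ and hence determine the entire image apartment $F'(\infty)$. Uniform continuity of both maps forces the image apartments of nearby $W_1,W_2$ to be Hausdorff-close, and every component $\varphi_j(W)$, being the consistently labelled chamber in a fixed position inside that apartment, then varies uniformly continuously. Some version of this step is needed; a componentwise repetition of Proposition \ref{unicts} cannot replace it.
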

\begin{proof}
Let $P$ be the hyperplane as above. Since $d_{\overline{K}}$ is biLipschitz with the visual metric at a point, we can assume that $d_{\overline{K}}$ is just the visual metric at a point in hyperplane $P$.

By the Proposition \ref{unicts}, there exists $ \varphi_i: \Star(M)\cap \overline{\Omega}\to\overline{K'}$ is uniformly continuous on a full measure subset $U\subset \Star(M)\cap \overline{\Omega}$, and chambers $\varphi_i(W)$ are adjacent with $P'(\infty)$ for all $W\in \Star(M)\cap\overline{\Omega}$.

Let $M^{op}$ be the opposite face to $M$ in $P(\infty)$. Consider $\overline{\varphi}:\Star(M^{op})\cap\overline{\Omega}\to \overline{K'}$. Because of the consistency when we define $\overline{\varphi}$, we have that $\varphi_i(W^{op})$ is opposite with $\varphi_i(W)$ and is adjacent to $P'(\infty)$ for any pair of opposite chambers $(W,W^{op})\in\ (\overline{\Omega}\cap \Star(M))\times (\overline{\Omega}\cap \Star(M^{op}))$. Let $U^{op}\subset \overline{\Omega}\cap \Star(M^{op})$ be the full measure subset that $\varphi_i$ is uniformly continuous on.

For every $ \epsilon >0$, there exists $ \delta_0$ such that for $W_1, W_2\in U$, if $d_{\overline{K}}(W_1,W_2)<\delta_0$ then \\$d_{\overline{K'}}(\varphi_i(W_1), \varphi_i(W_2))<\epsilon$, and for $W_3, W_4\in U^{op}$ with $d_{\overline{K}}(W_3,W_4)<\delta_0$ then $d_{\overline{K'}}(\varphi_i(W_3), \varphi_i(W_4))<\epsilon$.

Let $F_1, F_2$ be flats containing $P$ such that $W_1\subset F_1(\infty)$, $W_2\subset F_2(\infty)$. Let $W_1^{op}$, $W_2^{op}$ be opposite chambers with $W_1, W_2$ in $F_1(\infty)$ and $F_2(\infty)$. Because of what we assume on $d_{\overline{K}}$, if $d_{\overline{K}}(W_1,W_2)<\delta_0$ then $d_{\overline{K}}(W_1^{op}, W_2^{op})<\delta_0$.

Let $F_1', F_2'$ be flats in $Y$ associated with $\varphi(F_1), \varphi(F_2)$. The apartments $F_1'(\infty),\\ F_2'(\infty)$ have pairs of opposite chambers $(\varphi_i(W_1),\varphi_i(W_1^{op}))$ and $(\varphi_i(W_2),\varphi_i(W_2^{op}))$ are $\epsilon$-close. Hence apartments $F_1'(\infty), F_2'(\infty)$ are $\epsilon '$-close in Hausdorff metric, where $\epsilon '$ depends on $\epsilon$ and hyperplane $P'$. Therefore $\varphi_j$ is uniform continuous on $\Star(M)\cap \overline{\Omega}$ for all $j$. Note that all $\varphi_j$ are injective for all $j$ as well. This is because $F_1'(\infty), F_2'(\infty)$ share a common wall $P'(\infty)$ and $\varphi_i$ is injective.

Therefore $\overline{\varphi}$ is uniformly continuous on $U\subset \Star(M)\cap \overline{\Omega}$, and the extension of the map is also injective.
\end{proof}

\section{Regularity of boundary map}
\label{section:secondcontinuity}

The goal of this section is proving two following theorems:
\begin{theorem}\label{buildingmap}
There is a building monomorphism $\chi: \partial X\to \partial Y$ that agrees with $\overline{\varphi}$ a.e. on the set of chambers.
\end{theorem}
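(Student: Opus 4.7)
The plan is to construct $\chi$ by induction on combinatorial distance from a well-chosen base chamber, following the Tits-style strategy mentioned in the introduction. First I would select a \emph{good} base chamber $C_0 \in \overline{\Omega} \cap \overline{K}$ together with a \emph{good} apartment $A_0 \subset \partial X$ containing $C_0$. Here ``good'' means that in every combinatorial sphere $S_n(C_0)$ inside $A_0$, almost every wall $P$ of that sphere is a wall to which the uniform continuity and injectivity conclusion of Proposition~\ref{unicts} applies for every face $M$ of $P$. By repeated application of Fubini (Lemma~\ref{Fubini}) to the subgroups $K_\alpha$ that stabilize faces, such a pair $(C_0,A_0)$ exists; in fact the set of such pairs has full measure in $\overline{K}\times($apartments$)$. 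Once this is chosen, I would fix the value $\chi(C_0):=\varphi_1(C_0)$, thereby selecting one branch of the multi-valued map $\overline{\varphi}=(\varphi_1,\dots,\varphi_l)$ once and for all.

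The induction is on $n=dist(C,C_0)$. The inductive hypothesis is that $\chi$ is defined on the combinatorial ball $B_n(C_0)\subset \partial X$ as a continuous, injective, adjacency-preserving map into $\partial Y$ that agrees with $\varphi_1$ on a full measure subset of each $S_k(C_0)$ for $k\le n$, and that it sends every sub-apartment of $A_0$ contained in $B_n(C_0)$ into an apartment of $\partial Y$. For the inductive step, given a chamber $C\in S_{n+1}(C_0)$, pick a minimal gallery $C_0=D_0,D_1,\dots,D_{n+1}=C$, let $P$ be the panel separating $D_n$ from $C$, and let $P^{op}$ be the opposite panel of $D_n$ in $A_0$. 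Using goodness of $(C_0,A_0)$ I can assume $\text{Star}(M)\cap\overline{\Omega}\to \overline{K'}$ is uniformly continuous and injective for each face $M$ of $P$ via $\varphi_1$. I then define $\chi(C)$ to be the unique continuous extension of $\varphi_1$ from the full measure subset of $\text{Star}(M)\cap\overline{\Omega}$ to $C$, using the star continuity of the previous section. Injectivity on $B_{n+1}(C_0)$ comes from the ``opposite wall'' trick: the injectivity of $\varphi_1$ on $\text{Star}(M^{op})$ (which is already part of the inductive hypothesis since $M^{op}$ lies in $B_n(C_0)$) combined with the fact that opposite chambers in $A_0$ go to opposite chambers under $\overline{\varphi}$ (a consequence of Proposition~\ref{1flatprop} applied to the flat carrying $A_0$). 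Apartment preservation at level $n+1$ follows because any sub-apartment through $C$ in $A_0$ is the boundary of a flat that (generically) lies in $\mathcal{F}$, whose image is handled by Proposition~\ref{1flatprop}.

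The main obstacle I anticipate is the compatibility of the inductive definition with choices. A chamber $C$ at combinatorial distance $n+1$ can typically be reached by many galleries from $C_0$, and one must check that the value $\chi(C)$ does not depend on the gallery chosen, nor on the branch of $\overline{\varphi}$ we lift through intermediate chambers. This is the reason for the consistent labeling of $(\varphi_1,\dots,\varphi_l)$ by the Coxeter action constructed in Section~\ref{section:firstcontinuity}: any two galleries differ by a sequence of elementary homotopies across rank-two residues, and inside each rank-two residue the map is controlled by the apartment of a single flat in $\mathcal{F}$, on which $\overline{\varphi}$ is already rigid by Proposition~\ref{1flatprop} and Corollary~\ref{cham2cham}. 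A subtler point is that $\overline{\varphi}$ is only defined almost everywhere in each star, so the extension at each inductive step relies on the continuous extension statement of Proposition~\ref{unicts} rather than on the pointwise values of $\overline{\varphi}$; verifying that the resulting $\chi$ still agrees with $\overline{\varphi}$ on a full measure set of $S_{n+1}(C_0)$ is what forces the choice of a good apartment $A_0$ and a good base chamber at the outset.

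Once $\chi$ is defined on $B_n(C_0)$ for every $n$, it is defined on all of $\partial X$ since the chamber graph of $\partial X$ is connected. By construction $\chi$ is continuous, injective, agrees a.e.~with $\varphi_1$ on chambers, and sends each apartment of $\partial X$ (being realized as the boundary of some flat, generically in $\mathcal{F}$) into an apartment of $\partial Y$ while preserving the Coxeter type. This is precisely the statement that $\chi$ is a building monomorphism, so the theorem follows.
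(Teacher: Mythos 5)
Your overall strategy --- induction on combinatorial distance from a generic base chamber, using the star-continuity of Section \ref{section:firstcontinuity} and an ``opposite face'' mechanism --- is the same as the paper's, but there are two genuine gaps in the inductive step. First, you write that by goodness of $(C_0,A_0)$ you ``can assume'' that $\varphi_1$ is uniformly continuous and injective on $\Star(M)$ for each face $M$ of the panel $P$ separating $D_n$ from $C$. But Proposition \ref{unicts} only applies to \emph{almost every} face, and the panel attached to an \emph{arbitrary} chamber $C \in S_{n+1}(C_0)$ (which in general does not lie in $A_0$) has no reason to be one of them; genericity of the base data cannot force every face encountered in the induction to be good, since the faces at distance $n+1$ form a positive-dimensional family and only a full-measure subset of them is covered by Proposition \ref{unicts}. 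The paper handles the non-generic faces by approximating the apartment through $D$ and $\overline{\omega}(D)$ by a sequence of \emph{good} apartments containing $CHull(\overline{\omega}(D),C)$, and passing to the limit using the convergence lemma for half-apartments (Lemma \ref{conlemma}) together with the notion of \emph{coherence} between $\varphi_L$ and $\varphi_{\overline{\omega}(L)}$; this limiting argument is the technical core of the proof and is absent from your sketch.

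Second, your treatment of gallery-independence does not go through as stated: you propose to reduce to rank-two residues and claim that ``inside each rank-two residue the map is controlled by the apartment of a single flat in $\mathcal{F}$,'' but a rank-two residue is not contained in a single apartment, and its chambers need not lie in apartments coming from $\mathcal{F}$ at all. In the paper, well-definedness of $\psi(N)$ (and the agreement $\varphi_L(E)=\varphi_M(E)$ for different faces $L,M$ of the same chamber $E$) is instead forced by coherence: the image $\varphi_L(E)$ is characterized as the unique admissible chamber opposite to $\varphi_{\overline{\omega}(L)}(\overline{\omega}(E))$ in the image apartment, and this opposite chamber is determined by data near the base chamber where the map is already pinned down. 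You invoke the opposite face only for injectivity; it is in fact what makes the construction well defined. Filling these two gaps essentially requires reconstructing the paper's Lemmas \ref{conlemma}--\ref{starchambermaplemma} and the five-part inductive statement with the maps $\psi$ and $\varphi_L$.
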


\begin{theorem}\label{buildinghomeo}
$\chi$ is continuous in the cone topology.
\end{theorem}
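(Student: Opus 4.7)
The plan is to leverage two properties of $\chi$ already established. As a building monomorphism (Theorem \ref{buildingmap}), $\chi$ sends each apartment $A(\infty)\subset\partial X$ to an apartment $A'(\infty)\subset\partial Y$; and by Propositions \ref{isoflat} and \ref{1flatprop} together with the a.e.~agreement with $\overline{\varphi}$, the restriction $\chi|_{A(\infty)}$ is the Tits-boundary trace of a scalar multiple of an isometry $A\to A'$ composed with a fixed element of the Weyl group of $Y$. Since the cone topology on $\partial X$ restricts to the intrinsic spherical topology on each apartment, $\chi|_{A(\infty)}$ is already biLipschitz, in particular continuous, in the cone topology.

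To bootstrap apartment-wise continuity to global continuity, I would invoke the building axiom that any two points of $\partial_T X$ lie in a common apartment. Given $\xi\in\partial X$ and $\xi_n\to\xi$ in the cone topology, this provides apartments $A_n(\infty)$ containing both $\xi$ and $\xi_n$, whose underlying flats $F_n\subset X$ can be arranged to pass within a bounded neighborhood of a fixed basepoint $o$. The set of flats through a bounded region is precompact in the pointed Hausdorff topology, so after passing to a subsequence $F_n\to F$ uniformly on compacta, and the limit apartment $A(\infty)=F(\infty)$ contains $\xi$.

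The main obstacle and the heart of the argument is showing that the image apartments $A'_n(\infty)=\chi(A_n(\infty))$ converge in the cone topology to $A'(\infty)=\chi(A(\infty))$, equivalently that the associated flats $F'_n\subset Y$ converge to $F'$ on compact windows. For this I would combine Proposition \ref{Dcloseprop}, which bounds $d(\varphi(x),F'_n)$ uniformly for $x$ in a full-measure subset of $F_n$, with the a.e.~agreement of $\chi$ with $\overline{\varphi}$, and the fact that two flats in $Y$ that are both at uniformly bounded distance from a common QI-image of a flat cannot diverge Tits-asymptotically---a higher-rank Mostow--Morse-type rigidity already implicit in Section 3. Once the flat convergence $F'_n\to F'$ is in hand, the uniform biLipschitz control on $\chi|_{A_n(\infty)}$ with common scaling constant and fixed Weyl element yields $\chi(\xi_n)=\chi|_{A_n(\infty)}(\xi_n)\to\chi|_{A(\infty)}(\xi)=\chi(\xi)$ by a standard equicontinuity argument, proving continuity of $\chi$ in the cone topology.
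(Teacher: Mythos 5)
There is a genuine gap at the step you yourself identify as ``the main obstacle and the heart of the argument.'' Your plan is to choose apartments $A_n(\infty)$ containing both $\xi$ and $\xi_n$ and then control their images using Proposition \ref{Dcloseprop} and the almost-everywhere agreement of $\chi$ with $\overline{\varphi}$. But Proposition \ref{Dcloseprop} (and likewise Propositions \ref{isoflat} and \ref{1flatprop}) applies only to flats in the full-measure family $\mathcal{F}$ that are sub-$\theta_\delta$-diverging with respect to some point; an apartment chosen to contain a \emph{prescribed} boundary point $\xi$ lies in a measure-zero family, and for a fixed $\xi$ there may be no flat of $\mathcal{F}$ whose boundary contains it. For such apartments there is no flat $F'_n\subset Y$ associated to $\varphi(F_n)$ at all: $\chi(A_n(\infty))$ is defined only through the inductive limit construction of Theorem \ref{buildingmap}, not through the QI-embedding. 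For the same reason ``a.e.\ agreement with $\overline{\varphi}$'' gives you nothing along a specific convergent sequence $\xi_n\to\xi$, which may avoid the full-measure set of chambers entirely. So the convergence $F'_n\to F'$ is exactly what is not available by the tools you cite, and the proposed ``common scaling constant and fixed Weyl element'' across the varying apartments $A_n(\infty)$ is likewise never established (nor needed: a building monomorphism is automatically a Tits isometry on each apartment, so the apartment-wise continuity in your first paragraph is the easy part).

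The paper circumvents precisely this difficulty by a different mechanism. It first reduces cone-topology continuity to continuity on the Furstenberg boundary (the space of chambers), covers $\overline{K}$ by finitely many open sets $\Omega_i$ of chambers opposite to fixed \emph{good} chambers $C_i$, and then re-runs the induction on combinatorial distance from $C_i$: continuity of $\chi$ on $\Omega_i$ is extracted from the coherence of each star map $\varphi_L$ with $\varphi_{\overline{\omega}_i(L)}$ together with Lemma \ref{conlemma}, which says that a sequence of half-apartments whose boundary walls and one interior chamber converge must itself converge. In other words, the limits defining $\chi$ on bad apartments are controlled not by metric estimates from the QI-embedding but by the combinatorial rigidity of half-apartments anchored at the good chambers $C_i$. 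If you want to salvage your outline, you would need to replace the appeal to Proposition \ref{Dcloseprop} with an argument of this anchored, coherence-based type.
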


\begin{corollary}
$\chi(X)$ is a sub-building of $\partial Y$.
\end{corollary}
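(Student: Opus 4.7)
The plan is to deduce the sub-building structure on $\chi(\partial X)$ directly from Theorems \ref{buildingmap} and \ref{buildinghomeo}, transferring the building axioms from $\partial X$ to its image via the injective continuous simplicial map $\chi$. (I read the corollary as asserting that $\chi(\partial X)$, not $\chi(X)$, is a sub-building of $\partial Y$.)

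First I would observe that $\chi(\partial X)$ is a simplicial subcomplex of $\partial Y$: since $\chi$ is an injective building monomorphism, the image is closed under taking faces, and $\chi$ restricts to a bijection from the chambers of $\partial X$ onto their images. I would then take as the apartment system on $\chi(\partial X)$ the collection $\{\chi(\mathcal{A}) : \mathcal{A} \text{ an apartment of } \partial X\}$ and verify the building axioms relative to this collection.

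Second I would check that each $\chi(\mathcal{A})$ is actually an apartment of $\partial Y$ and not merely an abstract Coxeter subcomplex sitting inside a larger one. By the conformality assumption (1) of Theorem \ref{mainthm}, $\partial X$ (endowed with the non-thick structure introduced before Corollary \ref{cham2cham}) carries the same Coxeter type as $\partial Y$, and the equality $\rank(G) = \rank(G')$ forces an apartment of $\partial X$ to have the same combinatorial size as an apartment of $\partial Y$. Concretely, for $\mathcal{A} = F(\infty)$ with $F \in \mathcal{F}$, Proposition \ref{1flatprop} produces a unique flat $F' \subset Y$ with $\chi(\mathcal{A}) = F'(\infty)$, which is a genuine apartment of $\partial Y$; continuity in the cone topology (Theorem \ref{buildinghomeo}) propagates this identification from the almost-everywhere defined $\overline{\varphi}$ to all of $\partial X$.

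Third, I would verify the two building axioms on $\chi(\partial X)$: for (B1), any two chambers $C_1', C_2' \in \chi(\partial X)$ have unique preimages $C_1, C_2 \in \partial X$ (by injectivity) which lie in a common apartment $\mathcal{A}$ of $\partial X$, so $\chi(\mathcal{A})$ is an apartment of $\chi(\partial X)$ containing both $C_i'$; for (B2), given two apartments $\chi(\mathcal{A}_1)$ and $\chi(\mathcal{A}_2)$ sharing a chamber, the isomorphism $\mathcal{A}_1 \to \mathcal{A}_2$ supplied by the building structure of $\partial X$ conjugates through $\chi$ to the required simplicial isomorphism in $\partial Y$, pointwise fixing $\chi(\mathcal{A}_1) \cap \chi(\mathcal{A}_2)$. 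The main obstacle I expect is really the second step: pinning down that $\chi(F(\infty))$ coincides with a full apartment of $\partial Y$ rather than sitting properly inside one. This is precisely where the equal rank and conformality hypotheses of Theorem \ref{mainthm} are essential, as they rule out the degenerate scenario in which the image of a maximal flat of $X$ is a proper sub-flat of a maximal flat of $Y$; once this coincidence is established, the remaining axioms transfer formally, and the continuity of $\chi$ guarantees that the resulting sub-building structure is compatible with the cone topology inherited from $\partial Y$.
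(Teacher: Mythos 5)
Your proposal is correct and follows the same route as the paper, which simply declares the corollary to be an immediate consequence of Theorems \ref{buildingmap} and \ref{buildinghomeo}; you have merely written out the verification (apartments of $\partial X$ map to genuine apartments of $\partial Y$ via Proposition \ref{1flatprop} plus continuity, and the building axioms transfer through the injective monomorphism $\chi$) that the paper leaves implicit. Your reading of the statement as concerning $\chi(\partial X)$ rather than $\chi(X)$ is also the intended one.
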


\begin{proof}
This is obvious from theorem \ref{buildingmap} and theorem \ref{buildinghomeo}.
\end{proof}

We start with some terminologies and definitions.
We know that the Coxeter group for $\partial X$ is a subgroup of the Coxteter group for $\partial Y$. Therefore, from now on when we say subCoxeter structure, we mean the structure on each apartment in $\partial Y$ with the Coxeter group is the one of $\partial X$.

\begin{definition}\emph{(subCoxeter admissible)}

A union of chambers/faces in $\partial Y$ is called a subCoxeter admissible (or admissible in short if there is no confusion) chamber/face if the union is contained entirely in in some apartment $\Sigma\subset \partial Y$ and there is an isometry from the modeled apartment for $\partial X$ into $\Sigma$ such that the union is exactly an image of a chamber/face.
\end{definition}

For example, if $\partial Y$ is a $B_n$ building, and the subCoxeter we consider is of type $D_n$, then any subCoexter admissible chamber is a union of two adjacent chambers having a common face of certain type. The other example is in our situation, image of a chamber in $\overline{K}\cap\overline{\Omega}$ under $\overline{\varphi}$ is a subCoxeter admissible chamber.

A pair of admissible chambers/faces are said to be adjacent if their intersection is a codimension 1 admissible face, and are said to be opposite if they are contained in an apartment and opposite in that apartment.

Let $L'$ be an admissible face. Denote $\Star(L')$ for the set of admissible chambers containing $L'$ as a face. There is a natural topology on $\Star(L')$, coming from the Hausdorff topology on $\partial Y$. This means, a sequence of admissible chambers $(C_n)$ in $\Star(L')$ is said to converge if they converge in Hausdorff topology. Therefore, for a face $L\subset \partial X$ and an admissible face $L'\subset\partial Y$, it makes sense to say a map $\tau:\Star(L)\to \Star(L')$ is continuous, injective, and adjacency preserving. If $M$ is a face of $D\in \Star(L)$, abusing notations, we use $\tau(M)$ as the admissible face of $\tau(D)$ corresponding to $M$ in the obvious way.

For $D, E$ subsets of $\partial X$ or $\partial Y$, we denote $CHull(D,E)$ for the combinatorial convex hull of $D$ and $E$ in $\partial X$ or $\partial Y$.

\begin{definition}\emph{(Coherence)}

Let $L, L^{op}$ be opposite faces in $\partial X$, $L', L'^{op}$ be subCoxeter admissible opposite faces in $\partial Y$. Two continuous adjacency preserving maps $\tau: \Star(L)\to \Star(L')$ and $\tau^{op}:\Star(L^{op})\to \Star(L'^{op})$ are said to be coherent if for any pair of chambers $D\in \Star(L)$, $E\in \Star (L^{op})$ such that $CHull(D,E)$ is a half apartment then $Chull (\tau(D), \tau^{op}(E))$ is also a half apartment.
\end{definition}

In order to prove theorem \ref{buildingmap}, we need a few lemmas.

\begin{lemma}\label{conlemma}
Given a half sphere $HA$ in the apartment model for a building, and $f_n:HA\to\partial Y$ be a sequence of isometries. Let $L$ and $L^{op}$ be opposite faces in $HA$, and let $L_n=f_n(L)$, $L_n^{op}=f_n(L^{op})$. Assume $L_n$ converge to a face $f(L)$, $L_n^{op}$ converge to a face $f({L}^{op})$ (in cone topology) where $f(L)$ and $f({L}^{op})$ are opposite. Then the restriction of $f_n$ on boundary converge to a isometry from $\partial HA$ to a wall containing $f(L)$ and $f({L}^{op})$. Furthermore, if there is an interior point $x$ such that $f_n(x)$ converge to a point $f(x)$, and $f(x)$ is not opposite with any $f(\zeta)$ for $\zeta\in\partial HA$ then $f_n$ converge to an isometry $f$. In particular, if $f(x)$ is an interior point of a chamber then $f_n$ converge to an isometry $f$.
\end{lemma}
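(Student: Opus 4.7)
The plan is to combine a compactness argument (Arzel\`a--Ascoli for equicontinuous isometries between compact metric spaces) with a rigidity argument in CAT(1) geometry. Since $HA$ is compact and each $f_n$ is an isometric embedding into $\partial Y$ (compact in the cone topology), the family $\{f_n\}$ is uniformly equicontinuous, so every subsequence has a further subsequence converging uniformly to an isometric embedding. Restricting any such subsequential limit to $\partial HA$ gives an isometry $g$ from the sphere $\partial HA$ onto a wall in $\partial Y$, and the hypothesis $L_n\to f(L)$, $L_n^{op}\to f(L^{op})$ forces $g(L)=f(L)$ and $g(L^{op})=f(L^{op})$, so the image wall $g(\partial HA)$ contains the opposite pair $f(L), f(L^{op})$.

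To promote subsequential convergence to convergence of the whole sequence $f_n|_{\partial HA}$, I will argue that the subsequential limit is uniquely pinned down by the data. The key point is rigidity of isometric embeddings of a spherical apartment into a CAT(1) space: once the images of an opposite pair of faces $L, L^{op}$ are fixed and the map is constrained to preserve the simplicial structure (as any isometric embedding of an apartment does), every other simplex of $\partial HA$ has a uniquely determined image because its position relative to $L\cup L^{op}$ is encoded by angle and opposition data preserved by any isometric embedding. Hence any two subsequential limits of $f_n|_{\partial HA}$ coincide, the whole sequence converges, and the first conclusion follows.

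For the second conclusion, I extract subsequential limits of $f_n$ on all of $HA$ by a second application of Arzel\`a--Ascoli. Any such limit $f$ restricts on $\partial HA$ to the unique boundary isometry from the first part and sends $x$ to the prescribed $f(x)$. An isometric embedding of a half sphere into a CAT(1) space is determined by its restriction to the bounding wall together with a choice of which of the two hemispheres on either side of the image wall contains the interior image; these two possible extensions differ by reflection across the image wall, which swaps the ``opposite'' relationship of each interior point with the boundary. The hypothesis that $f(x)$ is opposite no $f(\zeta)$ for $\zeta\in\partial HA$ rules out the degenerate configuration (in particular, it prevents $f(x)$ from lying on the image wall itself, since a wall point always has its antipode on the wall) and pins down exactly one of the two hemispheres, so the subsequential limit is unique and the full sequence converges. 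The ``in particular'' clause is immediate: an interior point of a chamber lies strictly on one side of the enclosing wall, with its antipode in the interior of the opposite chamber on the other side, hence it is opposite no wall point.

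The principal obstacle is making the rigidity claim of the second paragraph precise, namely that an isometric embedding of a spherical apartment into a CAT(1) space is determined by its restriction to an opposite pair of faces. This will require a careful combinatorial-metric argument exploiting the simplicial structure of spherical apartments, the preservation of angles by isometries, and the uniqueness of geodesics in CAT(1) spaces away from antipodal configurations.
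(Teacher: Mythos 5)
Your high-level strategy (Arzel\`a--Ascoli plus uniqueness of the subsequential limit) is genuinely different from the paper's, which proves convergence directly: for each $z\in\partial HA$ one picks $\xi\in L$ and $\zeta\in L^{op}$ with $d_T(\xi,\zeta)<\pi$ and $z$ on the geodesic between them, and shows $(f_n(z))$ is Cauchy by a convexity estimate for rays from a fixed basepoint in the CAT(0) space $Y$; the interior statement is obtained the same way using the geodesics from $x$ to points of $\partial HA$, which sweep out $HA$. Your route could be made to work, but as written it has two genuine gaps.

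First, you assert that a subsequential limit of the $f_n$ (uniform in the cone topology) restricts to "an isometry" of $\partial HA$ onto a wall. This does not follow: the Tits metric is only lower semicontinuous with respect to the cone topology, so a cone-topology limit of Tits-isometric embeddings is merely $1$-Lipschitz and can collapse distances (limits of opposite pairs need not be opposite). The hypothesis that $f(L)$ and $f(L^{op})$ are opposite is precisely what rules out collapse, and you must use it for that purpose, not only to identify $g(L)$ and $g(L^{op})$; until you do, your rigidity lemma, which is stated for isometric embeddings, does not apply to the limit. Second, the uniqueness argument for the interior is wrong as stated: in a thick building a wall does not bound just "two hemispheres" --- each panel of the wall lies in at least three chambers, so a wall bounds many half-apartments and there is no global "reflection across the image wall" dichotomy. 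The correct mechanism, and the one the paper uses, is that since $f(x)$ is not antipodal to any $f(\zeta)$, every point of $HA$ lies on the unique ($<\pi$) geodesic from $x$ to some boundary point, and uniqueness of such geodesics in a CAT(1) space pins down the whole map; the same mechanism (every point of $CHull(L,L^{op})$ lies on a $<\pi$ geodesic from a point of $L$ to a point of $L^{op}$) is what makes your deferred "rigidity claim" for the wall precise. You have named the right tool, but both issues above must be repaired before the argument closes. Your justification of the final "in particular" clause is correct.
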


\begin{proof}
The case rank 2 is obvious since a wall consists of exactly two opposite faces. So we only consider the case rank is higher than 2. Fix a based point $x_0$, let $d_{x_0}$ be the visual metric at $x_0$. The cone topology is equivalent with the topology induced from visual metric $d_{x_0}$.

Now let $z\in\partial HA$, there are two points $\xi\in L$ and $\zeta\in L^{op}$ such that $d_T(\xi, \zeta)<\pi$ and $z$ is on the geodesic connecting $\xi$ and $\zeta$. Let $z_n=f_n(z)$, $\xi_n=f_n(\xi)$, $\zeta_n=f_n(\zeta)$. For any $\epsilon>0$, since $(\zeta_n)$ and $(\xi_n)$ are Cauchy, we have that $d_{x_0}(\zeta_n,\zeta_m)<\epsilon$, $d_{x_0}(\xi_n,\xi_m)<\epsilon$ for $n,m$ large enough. Thus, for some fixed $\lambda>0$, there exists $R(\epsilon)>0$ such that $d(\overrightarrow{x_0\xi_n}(R), \overrightarrow{x_0\xi_m}(R))<\lambda$, and  $d(\overrightarrow{x_0\zeta_n}(R), \overrightarrow{x_0\zeta_m}(R))<\lambda$ for $n,m$ large enough. Here $\overrightarrow{x_0\zeta_n}(R)$ denote the point at distance $R$ from $x_0$ on the ray $(\overrightarrow{x_0\zeta_n})$. Because of the convexity of distance in CAT(0) space we have that $d(\overrightarrow{x_0z_n}(R\cos(\frac{d_T(\xi_n,\zeta_n)}{2})),\overrightarrow{x_0z_m}(R\cos(\frac{d_T(\xi_n,\zeta_n)}{2})))<\lambda$. Note that $d_T(\xi_n,\zeta_n)=d_T(\xi,\zeta)\\ <\pi$ constant, and $R(\epsilon)\to \infty$ as $\epsilon\to 0$. Hence, $(z_n)$ is a Cauchy sequence, thus converge to some point, denote $f(z)$. We easily see that $d(f(z),f(\xi))=d(f_n(z),f_n(\xi))$, $d(f(z),f(\zeta))=d(f_n(z),f_n(\zeta))$, so $f(z)$ is on the geodesic connecting $f(\xi)$ and $f(\zeta)$. Note that convex hull of a pair of opposite faces is a wall. Therefore, restriction of $f_n$ on boundary of $HA$ converge to an isometry on $\partial HA$.

Furthermore, assume $x$ is an interior point of $HA$ and $f_n(x)$ converge to $f(x)$. Then for any $\zeta\in\partial HA$, by assumption we have $d_T(f(x),f(\zeta))<\pi$. By the argument in the previous paragraph $f_n(z)$ converge to $f(z)$ for any $z$ in geodesic segment connecting $\zeta$ and $x$. And thus $f_n(x)$ converge to a point $f(x)$ for any $x\in HA$. That f is an isometry follows easily.
\end{proof}

For any sequence of half apartments having a pair of opposite faces converge to a pair of opposite faces, we treat them as a sequence of isometries from a fixed half apartment. And by position, we mean the images under the isometries at the same point.

\begin{lemma}\label{starextension}
Let $L, L^{op}$ be two opposite faces in $\partial X$, $L', L'^{op}$ admissible faces in $\partial Y$. Assume that there is an adjacency preserving map $\tau: \Star(L)\to \Star(L')$ that is continuous and for a.e. $F\in \Star(L)$, the convex hull $CHull(\tau(F), L'^{op})$ is a half apartment. Then for any $F\in \Star(L)$, the convex hull $CHull(\tau(F), L'^{op})$ is a half apartment.
\end{lemma}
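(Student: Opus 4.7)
The plan is to take an arbitrary $F \in \Star(L)$, approximate it by a sequence of chambers $F_n \in \Star(L)$ on which the half apartment property is already known, and transport the property to the limit using Lemma~\ref{conlemma}. Since the good set in $\Star(L)$ has full measure it is dense, so by continuity of $\tau$ I can choose $F_n \to F$ with $\tau(F_n) \to \tau(F)$ in the Hausdorff topology and with $HA_n := CHull(\tau(F_n), L'^{op})$ a half apartment for every $n$.

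To invoke Lemma~\ref{conlemma} I will display the $HA_n$ as a sequence of isometric images of a single model half apartment $HA$. I fix a top chamber $\tilde D \subset HA$ whose face $\tilde L$ lies on the boundary wall $\partial HA$, and let $\tilde L^{op} \subset \partial HA$ be the face opposite to $\tilde L$ in $\partial HA$. Then I let $f_n : HA \to HA_n$ be the unique type-preserving isometry (with respect to the subCoxeter structure) that sends $\tilde D$ to $\tau(F_n)$ and $\tilde L$ to $L'$. Since both $L'$ and $L'^{op}$ lie on the wall bounding $HA_n$ and are opposite in that wall, this normalization automatically forces $f_n(\tilde L^{op}) = L'^{op}$. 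Hence the images $f_n(\tilde L) \equiv L'$ and $f_n(\tilde L^{op}) \equiv L'^{op}$ are constant, so the hypothesis on convergence of opposite faces in Lemma~\ref{conlemma} is met trivially. Picking any interior point $\tilde x \in \tilde D$, the image $f_n(\tilde x)$ is an interior point of $\tau(F_n)$ and converges to an interior point of $\tau(F)$, which is a chamber. The final clause of Lemma~\ref{conlemma} then yields a limit isometry $f : HA \to \partial Y$.

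The image $f(HA)$ is then an isometrically embedded half apartment of $\partial Y$ containing both $\tau(F) = f(\tilde D)$ and $L'^{op} = f(\tilde L^{op})$, so $CHull(\tau(F), L'^{op}) \subseteq f(HA)$. Equality follows because the combinatorial convex hull of a top chamber and the face on the boundary wall opposite to that chamber's wall-face already fills the entire half apartment. The main technical point is organizing the normalizations so that all hypotheses of Lemma~\ref{conlemma} line up: the isometries $f_n$ must be arranged to carry the model opposite pair $(\tilde L, \tilde L^{op})$ to the fixed pair $(L', L'^{op})$, which in turn uses only the combinatorial uniqueness of the face opposite $L'$ on the bounding wall of a half apartment. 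Beyond this, no further analytic input is required; the continuity of $\tau$ and the density of the good set, both already in hand, do the rest.
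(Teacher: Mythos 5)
Your proof is correct and follows essentially the same route as the paper's: approximate $F$ by a sequence $F_n$ from the (dense, full measure) good subset of $\Star(L)$, note that the half apartments $CHull(\tau(F_n),L'^{op})$ share the common boundary wall $CHull(L',L'^{op})$ while the chambers $\tau(F_n)$ converge to $\tau(F)$ by continuity, and conclude via Lemma~\ref{conlemma}. Your write-up is merely more explicit than the paper's in normalizing the isometries $f_n$ from the model half apartment and in verifying the hypotheses of Lemma~\ref{conlemma}, which the paper leaves implicit.
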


\begin{proof}
Let $(F_n)$ be a sequence of chambers converging to $F$ in $\Star(L)$, and have $CHull(\tau(F_n),L'^{op})$ are half apartments. Those half apartments have common boundary, that is the convex hull of $L'$ and $L'^{op}$. So we get a sequence half apartment with common boundary, and the sequence of certain cells converging. By lemma \ref{conlemma} the sequence of half apartments converges to a half apartment. Hence $CHull(\tau(F),L'^{op})$ is a half aparment.
\end{proof}

\begin{lemma}\label{starchambermaplemma}
Let $L_n$ be a sequence of faces converging to a face $L$, and $L^{op}$ is face that is opposite with all $L_n$ and $L$. Assume that we have $\tau_n:\Star(L_n)\to \Star(L_n')$, where $(L_n')$ is a sequences of admissible faces converging to an admissible face $L'$. Assume there is $L'^{op}$ an opposite admissible face with all $L_n'$ and $L'$ and there is continuous adjacency preserving map $\tau^{op}: \Star(L^{op})\to \Star(L'^{op})$ such that all pair of maps $(\tau_n, \tau^{op})$ are coherent. Then there is a unique continuous adjacency preserving map $\tau: \Star(L)\to \Star(L')$ such that $\tau$ and $\tau^{op}$ are coherent.
\end{lemma}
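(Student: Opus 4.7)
My plan is to define $\tau$ by anchoring against $\tau^{op}$ and taking limits of the $\tau_n$, and then to verify the required properties via the coherence hypothesis together with Lemma \ref{conlemma}. Specifically, given $D\in\Star(L)$, I would choose a sequence $D_n\in\Star(L_n)$ with $D_n\to D$ (possible because $L_n\to L$) and pick any $E\in\Star(L^{op})$ such that $CHull(D,E)$ is a half apartment, which exists since $L$ and $L^{op}$ are opposite. For all sufficiently large $n$, $CHull(D_n,E)$ is also a half apartment, so by coherence of $(\tau_n,\tau^{op})$ the combinatorial convex hull $H_n:=CHull(\tau_n(D_n),\tau^{op}(E))$ is an admissible half apartment. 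Each $H_n$ has bounding wall $CHull(L_n',L'^{op})$ and contains the constant chamber $\tau^{op}(E)$ on its other side. Since $L_n'\to L'$ while $L'^{op}$ is fixed, Lemma \ref{conlemma} applied to the bounding walls (viewed as isometries from a fixed apartment model) and then extended using the constant interior chamber $\tau^{op}(E)$ shows that the half apartments $H_n$ converge to a half apartment $H$ bounded by $CHull(L',L'^{op})$ and containing $\tau^{op}(E)$. I define $\tau(D)$ to be the unique admissible chamber in $H$ containing $L'$; equivalently $\tau_n(D_n)\to\tau(D)$.

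Well-definedness is essentially built into this construction. Independence of $E$ is automatic: for any valid $E$, the chamber in $H_n$ containing $L_n'$ is $\tau_n(D_n)$ itself, which does not depend on $E$, so neither does the limit. Independence of the approximating sequence $D_n$ follows from the intrinsic characterization of $H$ as the unique half apartment with bounding wall $CHull(L',L'^{op})$ that contains the fixed chamber $\tau^{op}(E)$: a different sequence $D_n'\to D$ produces half apartments $H_n'$ converging to a half apartment with the same bounding wall and same anchor chamber, hence to $H$, and the chamber at $L'$ is unique in $H$. The same intrinsic characterization yields the uniqueness clause of the lemma: any continuous adjacency-preserving map coherent with $\tau^{op}$ must send $D$ to the chamber at $L'$ in this unique half apartment.

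Continuity of $\tau$ on $\Star(L)$ is then a direct reapplication of Lemma \ref{conlemma}: given $D^{(k)}\to D$, fix $E$ so that $CHull(D,E)$ is a half apartment, which remains a half apartment for each $D^{(k)}$ once $k$ is large; the half apartments $CHull(\tau(D^{(k)}),\tau^{op}(E))$ share the fixed bounding wall $CHull(L',L'^{op})$ and the fixed anchor chamber $\tau^{op}(E)$, so they converge to $CHull(\tau(D),\tau^{op}(E))$, whence $\tau(D^{(k)})\to\tau(D)$. Adjacency preservation is inherited from the $\tau_n$: if $D_1,D_2\in\Star(L)$ share a codimension-one admissible face $M$, approximate them by adjacent sequences $D_{i,n}\in\Star(L_n)$ sharing a face $M_n\to M$, and use that each $\tau_n$ is adjacency preserving so that $\tau_n(D_{1,n})$ and $\tau_n(D_{2,n})$ share a common admissible face; passing to the limit gives a shared face between $\tau(D_1)$ and $\tau(D_2)$, which is the image $\tau(M)$. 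Coherence of $(\tau,\tau^{op})$ is built into the construction.

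The main obstacle is the independence of $\tau(D)$ from the approximating sequence $D_n$. Since the $\tau_n$ are only continuous on their own stars and not uniformly across $n$, one cannot directly compare $\tau_n(D_n)$ and $\tau_n(D_n')$ for two sequences tending to $D$. The anchor viewpoint circumvents this by characterizing $\tau(D)$ intrinsically through coherence with $\tau^{op}$, but making this characterization rigorous relies on the combinatorial fact that a half apartment in a spherical building is rigidly determined by its bounding wall together with one chamber on its interior side, and that within such a half apartment the chamber containing a prescribed boundary face is unique. Verifying these rigidity statements in the subCoxeter admissible category, and confirming that the abstract limit produced by Lemma \ref{conlemma} is a genuine half apartment (so that "the chamber at $L'$" is actually well-defined in the limit), is where the real care is needed.
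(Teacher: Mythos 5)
There is a genuine gap at the foundation of your construction: the claim that, for a \emph{fixed} anchor chamber $E\in\Star(L^{op})$ with $CHull(D,E)$ a half apartment, $CHull(D_n,E)$ is again a half apartment for all large $n$. This is false in general: forming a half apartment with a fixed chamber is a rigid (closed, positive-codimension) condition on $D_n$, not an open one. Concretely, in the $A_2$ building of flags in $\mathbb{P}^2$, take $L=p$ a point, $L^{op}=\ell$ a line with $p\notin\ell$, $D=(p,m)$ and $E=(q,\ell)$; then $CHull(D,E)$ is a half apartment exactly when $q=m\cap\ell$, and perturbing $D$ to $D_n=(p_n,m_n)$ generically moves $m_n\cap\ell$ off $q$, so $CHull(D_n,E)$ is no longer a half apartment and the coherence hypothesis tells you nothing about $CHull(\tau_n(D_n),\tau^{op}(E))$. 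The same defect recurs in your continuity argument for $\tau$. A telling symptom is that your argument never invokes the continuity of $\tau^{op}$, which is a hypothesis of the lemma and is genuinely needed.

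The paper's proof avoids this by letting the anchor vary with $n$: it takes $E_n$ to be the chamber of the half apartment $CHull(D_n,L^{op})$ adjacent to $L^{op}$ (so $CHull(D_n,E_n)$ is automatically a half apartment and coherence of $(\tau_n,\tau^{op})$ applies), proves $E_n\to E$ by applying Lemma \ref{conlemma} to the half apartments $CHull(D_n,E_n)$ in the source building, and only then uses the continuity of $\tau^{op}$ to get $\tau^{op}(E_n)\to\tau^{op}(E)$, which supplies the convergent interior chamber needed to apply Lemma \ref{conlemma} once more to $HA_n=CHull(\tau_n(D_n),\tau^{op}(E_n))$ in the target. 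Your remaining points --- characterizing the limit half apartment by its bounding wall together with one interior chamber, and deducing well-definedness, uniqueness, adjacency preservation and coherence from that --- are in the spirit of the paper's (terse) conclusion and would go through once the anchor is allowed to move in this way.
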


\begin{proof}
We show that if $(D_n)$ is a sequence of chambers converging to the chamber $D$ where $D_n\in \Star(L_n)$ and $D\in \Star(L)$ then $\tau_n(D_n)$ converge to an admissible chamber adjacent to $L$, and we set the limit to be $\tau(D)$. Let $HA_n=CHull(L'^{op},\tau_n(D_n))$. As $\tau_n$ and $\tau^{op}$ are coherent, those sets are half apartments. First we have that their boundaries converge to a wall, that is the convex hull of $L'$ and $L'^{op}$. In the half apartment $CHull(D_n, L^{op})$, let $E_n$ be the chamber adjacent with $L^{op}$. Since $\tau_n$ and $\tau^{op}$ are coherent, $HA_n=CHull(\tau_n(D_n),\tau^{op}(E_n))$. The sequence of half apartments $CHull(D_n,E_n)$ have their boundaries converge, and interior chambers $D_n$ converge as well. By Lemma \ref{conlemma} the sequence of half apartments converge, in particular $E_n$ also converge to some limit chamber, say $E$. Clearly, $CHull (D,E)$ is a half apartment. Now $HA_n$ have boundaries converging to a wall and interior chambers $\tau^{op}(E_n)$ converge  to $\tau^{op}(E)$ due to continuity of $\tau^{op}$. Therefore the half apartments, and hence $\tau_n(D_n)$, converge as well. We set $\tau (D)$ be the limit. The coherence property of $\tau$ and $\tau^{op}$ follows immediately from from the definition of $\tau$. Continuity and uniqueness follow from coherence.
\end{proof}

\begin{proof} [Proof of theorem \ref{buildingmap}]
We already know that $\overline{\varphi}$ can be defined on chambers of almost every apartment, and $\overline{\varphi}$ sends those apartments to apartments. Let $\mathcal{A}$ be the family of apartments that bound flats in family $\mathcal{F}$. Then for almost every faces $L$ in $\partial X$, the subset of $\Star(L)$ consists of chambers $E$ such that $E$ is a chamber of some apartment in the family $\mathcal{A}$, has full measure in $\Star(L)$. We say an apartment $A\in\mathcal{A}$ is \textbf{good}, if for any faces $L\subset A$ then $\Star(L)$ has a full measure subset consists of chambers that belong to apartment in family $\mathcal{A}$. Then the family of good apartment is still of full measure in the set of apartments. Note that from previous section, if $L$ is a face of a good apartment $A\in\mathcal{A}$ then $\overline{\varphi}$ is uniformly continuous on a full measure subset of $\Star(L)$, and this full measure subset contains all chambers in $A$ that have $L$ as a face.

Let $\mathcal{E}\subset G$ be of full measure such that $\forall g\in \mathcal{E}$, $\pi(gA)(\infty)$ is a good apartment. In building $\partial X$, recall that there are $q$ chambers $C_1, \dots, C_q$ in an apartment ($C_q$ is opposite with $C_1$). There are $(q-1)$ subsets $\Delta_1,\dots,\Delta_{q-1}$ of $\Xi$ such that $D^+_{\Delta_i}(\infty)$ is the convex hull of $C_1$ and $C_i$. Here recall that $D^+_{\Delta_i}$ is a union of Weyl sectors consists of vector in $A$ with positive value when evaluated by all roots in $\Delta_i$. Let $U_{\Delta_i}$ be the unipotent subgroup of $G$ corresponding with the set of roots $\Delta_i$. By lemma \ref{Fubini}, there is $\mathcal{E'}\subset \mathcal{E}$ of full measure such that for all $g\in \mathcal{E'}$, and for each $i$, a.e. $h\in U_{\Delta_i}$, the apartment $\pi(ghA)(\infty)$ is good. Note that $\{\pi(ghA):h\in U_{\Delta_i}\}$ is the family of all apartments containing (convex hull of) chambers $gC_1$ and $gC_i$. Fix such $g$, let $C=gC_1$ and $\Sigma=\pi(gA)(\infty)$. And let $\Sigma '$, $C'$ be the apartment and admissible chamber corresponding to $\Sigma$ and $C$ via $\overline{\varphi}$.

Set $\overline{\omega} =op_\Sigma\circ retr_{\Sigma,C}$, where $retr_{\Sigma,C}$ is the retraction of $\partial X$ onto $\Sigma$ centered at $C$, and $op_\Sigma$ is the map sending a chamber/face to the opposite one in $\Sigma$. Similarly, we set $\overline{\omega}'=op_{\Sigma'}\circ retr_{\Sigma',C'}$. This is, however, not always well defined on set of admissible chambers/faces, but we will only consider the map on whichever admissible chambers/faces it can be defined. We also denote $\alpha: \Sigma\to\Sigma'$ the isomorphic complex map that is restriction of $\overline{\varphi}$ on $\Sigma$.

For every face $L\subset \Sigma$, by assumption, there is an admissible face $L'\subset \Sigma'$, and a continuous adjacency preserving map $\varphi_L: \Star(L)\to \Star(L')$ that coincides a.e. with $\overline{\varphi}|_{\Star(L)\cap \overline{\Omega}}$. We let $\psi(L)=L'$. We also have that $\varphi_L$ is coherent with $\varphi_{\overline{\omega} (L)}$ for all faces $L\subset \Sigma$.

For general L, we prove by induction on the combinatorial distance from $C$ the following:

\textit{For every face $M\subset \partial X$ of distance at most $k+1$ from $C$, there is an admissible face $\psi (M)\subset \partial Y$, and for each face $L\subset\partial X$ of distance at most $k$ from $C$, there is a continuous adjacency preserving map $\varphi_L: \Star(L)\to \Star(\psi(L))$ with the following properties:
\begin{enumerate}
\item $\overline{\omega}'(\psi (M))=\alpha (\overline{\omega}(M))$.\label{1}
\item $\varphi_L$ and $\varphi_{\overline{\omega}(L)}$ are coherent.\label{2}
\item if $M\subset \partial X$ is a face in a chamber containing $L$ such that $dist(C,M)=\\dist(C,L)-1$ or $dist(C,M)=dist(C,L)$, then $\varphi_M$ and $\varphi_L$ coincide on the chamber containing both $L$ and $M$.\label{3}
\item If $L$ is a face in a good apartment containing $L$, $\overline{\omega}(L)$, and $C$ then $\varphi_L$ agrees with $\overline{\varphi}$ on a full measure subset of $\Star(L)$. If $N$ is a face in a good apartment containing $N$, $\overline{\omega}(M)$, and $C$ then $\psi(M)$ agrees with image of $M$ under $\overline{\varphi}$ restricted on that good apartment.\label{4}
\item The restriction of $\psi$ to the set of faces of distance $k+1$ from $C$ is continuous.\label{5}
\end{enumerate}
}

Indeed, when $k=0$ then $L$ is a face of $C$, and $\varphi_L$ and $\psi(L)$ already exist. Then (\ref{2}), (\ref{3}) and the first half of (\ref{4}) will be immediate. Denote $E_1(C)$ for set of chambers that are adjacent to $C$ by a codimension 1 face of $C$. If $M$ is a face at distance 1 from $C$, then there is $ D\in E_1(C)$ such that $M$ is a face of $D$. Assume that $L=C\cap D$, then $\psi(M)$ can be defined as $\varphi_L(M)$. We know (\ref{1}) is true as $\varphi_L(D)$ and $\varphi_{\overline{\omega}(L)}(\overline{\omega}(D))$ are opposite due to the coherence of $\varphi_L$ and $\varphi_{\overline{\omega}(L)}$. For (\ref{5}), if $M_n$ are faces at distance 1 from $C$ that converge to a face $M$, also at distance 1. Let $D,D_n\in E_1(C)$ such that $M$ and $ M_n$ is a face of $D$ and $D_n$ respectively, and assume $L_n=C\cap D_n$, $L=C\cap D$. As $D_n$ converge to $D$, $L_n$ also converge to $L$. Hence $L_n=L$ for large $n$. Hence $\varphi_{L_n}(D_n)$ converge to $\varphi_L(D)$ due to the continuity of $\varphi_L$. Therefore $\psi(M_n)$ converge to $\psi(M)$ as well.

Assume by induction that $\varphi_L$ exists with above properties up to faces $L$ have distance at most $k-1$ from $C$, and $\psi$ is defined for faces at distance up to $k$. Now let $L$ be a face of distance $k$. Let $D$ be a chamber containing $L$ and is on a combinatorial geodesic path from $C$ to $L$. Let $M$ be a face of $D$ that has distance $k$ to $C$. Note that $D$ and $\overline{\omega}(D)$ determine an apartment, and this apartment contains the convex hull of $\overline{\omega}(D)$ and $C$. By assumption, a.e. apartments containing the convex hull of $\overline{\omega}(D)$ and $C$ are good. Thus there is a sequence of good apartments containing the convex hull of $\overline {\omega}(D)$ and $C$, converging to the apartment containing $\overline{\omega}(D)$ and $D$. Hence, there exists a sequence of faces $L_n$ in that sequence of good apartments such that $L_n$ converge to $L$. Moreover for each $L_n$ there is a continuous adjacency preserving map $\tau_n: \Star(L_n)\to \Star(\psi(L_n))$ that agrees with $\overline{\varphi}$ almost everywhere on $\Star(L_n)$. $\tau_n$ sends star chamber of $L_n$ to star chamber of $\psi(L_n)$ is due the property (\ref{4}) on induction assumption of $\psi$. Since $\tau_n$ agrees with $\overline{\varphi}$ on a full measure set of $\Star(L_n)$, by lemma \ref{starextension}, $\tau_n$ and $\varphi_{\overline{\omega}(N)}$ are coherent. Because of the induction assumption (\ref{5}) on continuity on the set of faces at distance $k$ from $C$, we have that $\psi(L_n)$ converge to $\psi(L)$. $\psi(L)$ is opposite with $\psi(\overline{\omega}(L))$ because of (\ref{1}). By the lemma \ref{conlemma} there is a continuous adjacency preserving map $\varphi_L: \Star(L)\to \Star(\psi(L))$ that is coherent with $\varphi_{\overline{\omega}(L)}$. With different choices of sequences of good apartments and $L_n$, we still get an adjacency preserving map from star chamber of $L$ to star chamber of $\psi(L)$ due to the continuity of $\psi$ by (\ref{4}). And the map $\varphi_L$ is defined uniquely due to the coherence with $\varphi_{\overline{\omega}(L)}$.

Let $N$ be a face at distance $k+1$ from $C$. Assume that $N$ is a face of $E\in \Star(L)$ where $dist(L,C)=k$. Define $\psi(N)=\varphi_L(N)$. We have to prove $\psi(N)$ is well defined. The convex hull of $N$ and $C$ contains only one chamber having $N$ as a face, i.e. there is only one chamber, that is $E$, containing $N$ and such that $dist(E,C)=k+1$.  Therefore, if $M$ is a face contained in the combinatorial path from $C$ to $N$ and such that $dist(C,M)=k$ then $M$ is a face of $E$. To show $\varphi_L$ and $\varphi_M$ map the face $N$ into the same image, we show that $\varphi_L(E)=\varphi_M(E)$. Because of the way we choose apartment $\Sigma$, there is a sequence of good apartments containing $CHull(\overline{\omega}(E),C)$ that converges to the apartment containing $E$ and $\overline{\omega}(E)$. Pick the corresponding sequences of chambers $(E_n)$ and faces $(L_n)$ and $(M_n)$ where $L_n, M_n$ are faces of $E_n$ and such that $E_n\to E$, $L_n\to L$, $M_n\to M$ and $\overline{\omega}(E_n)=\overline{\omega}(E)$, $\overline{\omega}(L_n)=\overline{\omega}(L)$, $\overline{\omega}(M_n)=\overline{\omega}(M)$. On the image we know that $\psi(L_n)\to \psi(L)$ and $\psi(M_n)\to \psi(M)$ due to the continuity of $\psi$ on the set of faces at distance $k$ from $C$. Recall that $\varphi_L(E)$ and $\varphi_M(E)$ are defined as limits of $\tau_{L_n}(E_n)$ and $\tau_{M_n}(E_n)$ respectively, where $\tau_{L_n}$ and $\tau_{M_n}$ are coherent with $\varphi_{\overline{\omega}(L_n)}$, $\varphi_{\overline{\omega}(M_n)}$ and agrees with $\overline{\varphi}$ a.e. on $\Star(L_n)$, $\Star(M_n)$. Due to the coherence, $\tau_{L_n}(E_n)=\tau_{M_n}(E_n)$, implies that $\varphi_L(E)=\varphi_L(E)$. Thus $\varphi_L(E)=\varphi_M(E)$. Hence the image of the face $N$ is well-defined when we set $\psi(N)=\varphi_L(N)=\varphi_M(N)$.

We now verify properties (\ref{1}) - (\ref{5}) of $\varphi$ and $\psi$. Note that the coherence (\ref{2}) is immediate from the way we defined $\varphi_-$.
\begin{enumerate}
\item Let $N$ be a face at distance $k+1$ from $C$. Assume that $N$ is a face of $E\in \Star(L)$ where $dist(L,C)=k$. Because $\varphi_L$ is coherent with $\varphi_{\overline{\omega}(L)}$, we have that $\overline{\omega}'(\varphi_L(E))=\alpha(\overline{\omega}(E))$. Hence $\overline{\omega}'(\psi(N))=\alpha(\overline{\omega}(N))$.

\item $\varphi_L$ and $\varphi_{\overline{\omega}(L)}$ are coherent because of the way we defined $\varphi_L$. This coherence property will make $\varphi_L$ be uniquely defined.

\item Let $M\subset \partial X$ is a face in a chamber containing $L$ such that $dist(C,M)=\\ dist(C,L)-1$ or $dist(C,M)=dist(C,L)$. Let E be the chamber containing $L$ and $M$.We treat each case separately.

Case 1: $dist(C,M)=dist(C,L)-1$. Because of the induction assumption (\ref{1}), we have that $\varphi_M(L)=\psi(L)$. Consider the apartment containing $E$ and $\overline{\omega}(E)$. Since $\varphi_M$ and $\varphi_{\overline{\omega}(M)}$ are coherent, this apartment is mapped to an apartment in $\partial Y$ and $\varphi_M(E)$ is opposite with $\varphi_{\overline{\omega}(M)}(\overline{\omega}(E))$. This apartment also contains $\psi(L)$ and $\psi(\overline{\omega}(L))$, as they are admissible faces of $\varphi_M(E)$ and $\varphi_{\overline{\omega}(M)}(\overline{\omega}(E)$. Moreover, $\varphi_L$ and $\varphi_{\overline{\omega}(L)}$ are coherent, thus $\varphi_L(E)$ is the admissible chamber opposite to $\varphi_{\overline{\omega}(L)}(\overline{\omega}(E))$ in this image apartment. But $\varphi_{\overline{\omega}(L)}(\overline{\omega}(E))$ is the same as $\varphi_{\overline{\omega}(M)}(\overline{\omega}(E))$. Hence $\varphi_L(E)=\varphi_M(E)$.

Case 2: $dist(C,M)=dist(C,L)$. The same argument as when we showed $\psi$ is well-defined, can be applied in this case to conclude that $\varphi_L(E)=\varphi_M(E)$.

\item If $M$ is a face at distance $k+1$ from $C$ in a good apartment $A$ containing $C$ and $\overline{\omega}(M)$ . Let $L$ be a face at distance $k$ on the geodesic combinatorial path from $C$ to $M$. Because of the way we defined $\varphi_L$, it is obvious that $\varphi_L$ agrees with $\overline{\varphi}$ on a full measure subset of $\Star(L)$. Let $E$ be the chamber containing $L$ and $M$. By the observation we made when we defined good flat, the subset of $\Star(L)$ where $\overline{\varphi}$ is uniformly continuous on contains $E$. Therefore $\psi(M)=\varphi_L(M)$ agrees with image of $M$ by restriction of $\overline{\varphi}$ on the good apartment $A$.

\item Let $L_n$ be a sequence of faces at distance $k+1$ from $C$, and converge to a face $L$, also at distance $k+1$ from $C$. From some $k$ large enough, $\overline{\omega}(L_n)=\overline{\omega}(L)$, therefore, without loss of generality we assume $\overline{\omega}(L_n)=\overline{\omega}(L)$ for all $n$. By the lemma \ref{conlemma}, the sequence of half apartments containing $L_n$, $\overline{\omega}(L_n)$, and $C$ converge to the half apartment containing $L$, $\overline{\omega}(L)$, and $C$. Therefore the sequence of chambers $E_n$ converge to the chamber $E$, where $E_n$ and $E$ are chambers containing $L_n$ and $L$ such that $dist(E_n,C)=dist(E,C)=k+1$. Let $(M_n)$ be a sequence of faces of $E_n$ such that $dist(M_n,C)=k$ and $M_n$ converge to a face $M$ of $E$. Because of the convergence of the sequence of half apartments, we can assume that $\overline{\omega}(M_n)=\overline{\omega}(M)$, and $\overline{\omega}(E_n)=\overline{\omega}(E)$. By induction, $\psi(M_n)$ converge to $\psi(M)$, and $\varphi_{M_n}$, $\varphi_M$ are coherent with $\varphi_{\overline{\omega}(M)}$. By Lemma \ref{starchambermaplemma}, $\varphi_{M_n}$ converge to $\varphi_M$ due to the coherence of $\varphi_M$ and $\varphi_{\overline{\omega}(M)}$. In particular, $\varphi_{M_n}(E_n)$ converge to $\varphi_M(E)$. It follows that $\psi(L_n)$ converge to $\psi(L)$. Hence restriction of $\psi$ on the set of faces of distance $k+1$ from $C$ is continuous.
\end{enumerate}

So we have proven the existence of $\psi$ and $\varphi_-$ with above properties. This induces a map $\chi: \partial X \to \partial Y$, defined as following points of image $\chi(E)$ are points of admissible chamber $\varphi_L(E)$ for some face $L$ of the chamber $E$. $\chi$ is well-defined because of the property (\ref{3}). Note that $\chi$ is adjacence preserving and injective on each star chamber, hence $\chi$ is a building monomorphism into the image $Z\subset\partial Y$. The fact that $\chi$ agrees with $\overline{\varphi}$ a.e. follows from the (\ref{4}) and the way we picked the apartment $\Sigma$.
\end{proof}

\begin{proof}[Proof of theorem \ref{buildinghomeo}]
Since $\chi$ maps each chamber in $\partial X$ to an admissible chamber in $\partial Y$, in order to prove $\chi$ is continuous on cone topology, we prove that the map $\chi$ is continuous with respect to Furstenberg boundary topology. There exists $ m$, such that for almost every tuple of $m$ chambers, the Furstenberg boundary can be written as union of $m$ open subsets consisting of opposite chambers with the ones in the m-tuple. Thus, there is a tuple $(C_1,\dots,C_m)$ of chambers such that:
\begin{itemize}
\item $C_i$ satisfies all property as chamber $C$ we pick at the beginning of the proof of theorem 2, for all $i=1,\dots,m$.
\item $C_i$ and $C_j$ are pairwise opposite for $i\neq j$. And apartments containing each pair $C_i$, $C_j$ are all good.
\item The Furstenberg boundary can be written as union of $m$ open sets $\Omega_1,\dots, \Omega_m$, where $\Omega_i$ is the set of opposite chambers with $C_i$.
\end{itemize}
Therefore, we only need to prove $\chi$ is continuous on each open set $\Omega_i$.  Let $\Sigma_i$ be the good apartment containing $C$ and $C_i$. Set $\overline{\omega}_i =op_\Sigma\circ retr_{\Sigma_i,C_i}$. Proceed inductively on combinatorial distance $k$ from chamber $C_i$ as the proof of of Theorem \ref{buildingmap}, that the restriction $\psi$ on sphere radius $k$ of faces around $C_i$ is continuous. For $k=1$, this claim is true because of the continuity of $\varphi_M$ where $M$ is any face of $C_i$. Suppose that the claim is true up to distance $k$. Note that $\chi$ restrict to any apartment is an isomorphism, hence for any face $L$, $\varphi_L$ and $\varphi_{\overline{\omega}_i(L)}$ are coherent. Then we are in the same situation as proof of the (\ref{5}) of $\psi$ in the proof of theorem 2. Therefore by the same argument, we conclude that restriction of $\psi$ is continuous on each sphere of faces around $C_i$. Now assume that we have a sequence of chambers $(W_n)$ converging to a chamber $W$, and they are all in $\Omega_i$. Let $(L_n)$ be a sequence of faces of $(W_n)$ that converge to a face $L$ of $W$. For $n$ large enough then $\overline{\omega}_i(L_n)=\overline{\omega}_i(L)$, thus without loss of generality, we can assume this for all $n$. Let $E_n$ be the chamber adjacent to $C_i$ via the face $\overline{\omega}_i(L)$ and is in the apartment containing $C_i$ and $W_n$. Because $W_n$ converge to $W$, $E_n$ also converge to a chamber $E$ in the apartment containing $C_i$ and $W$, so that $C_i$ and $E$ are adjacent via the face $\overline{\omega}_i(L)$. The sequence of half apartment containing $\psi(L_n)$, $\psi(\overline{\omega}_i(L))$, and $\chi(E_n)=\varphi_{\overline{\omega}_i(L)}(E_n)$, have opposite faces converge to a pair of opposite faces and chambers converge to a chamber, thus by Lemma \ref{conlemma}, the sequence of half apartments converges. In particular $\chi(W_n)=\varphi_{L_n}(W_n)$ converge to a chamber. This chamber has to be $\chi(W)=\varphi_L(W)$ due to the coherence of $\varphi_L$ and $\varphi_{\overline{\omega}_i(L)}$. Therefore $\chi$ is continuous on $\Omega_i$. It follows that $\chi$ is continuous.
\end{proof}

\section{Rigidity}

From previous section, we have that $\partial Y$ contains a sub-building $\chi(\partial X)$ that is isomorphic with $\partial X$. By \cite[Theorem 3.1]{KL1}, there is a symmetric subspace isometric to $X$ of $Y$ that has the boundary is $\chi(\partial X)$. Therefore, $G$ is a subgroup of $G'$, and the map $\chi$ is given by the action of a element $g\in G<G'$ on boundary.
\begin{proposition}
There is $C$ such that for all $\gamma\in \Gamma$, $d(\varphi(\gamma),\pi(g\gamma))<C$.
\end{proposition}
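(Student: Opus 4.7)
The plan is to combine the identification $\chi = g|_{\partial X}$ established at the start of this section with Proposition~\ref{Dcloseprop} to pin down $\varphi$ near the isometric action of $g$ on a dense full-measure subset of $X$, and then propagate the resulting bound to every $\gamma \in \Gamma$ using coarse-Lipschitz control of $\varphi$.

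First I would observe that, under the identification of $X$ as a totally geodesic subspace of $Y$, the unique flat $F' \subset Y$ associated to any $F \in \mathcal{F}$ by Proposition~\ref{1flatprop} coincides with $gF$. Indeed, a flat in a symmetric space is determined by its apartment at infinity, and $F'(\infty) = \chi(F(\infty)) = g\cdot F(\infty) = (gF)(\infty)$, using Theorem~\ref{buildingmap} together with the identification of $\chi$ with left translation by~$g$.

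Next I would establish $d(\varphi(x), \pi(gx)) < C_0$ on a full-measure subset of $X$. By Fubini applied in the construction of $\Omega_\delta$ and $\mathcal{G}$ in Section~\ref{subsection:goodflats}, for almost every $x \in X$ there is a finite family $F_1,\dots,F_N \in \mathcal{F}$ of flats through $x$, each sub-$\theta_\delta$-diverging w.r.t.~$x$, whose boundary apartments realize a full Weyl pattern at $x$; in particular $\bigcap_i F_i = \{x\}$ and the flats are transverse. A natural choice is the $G$-translate of the fixed reference configuration $\{A, k_\alpha A\}_{\alpha \in \Xi}$ of Section~\ref{Preliminaries}, so the configuration at each generic $x$ is $G$-congruent to a basepoint configuration. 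Proposition~\ref{Dcloseprop} then gives $d(\varphi(x), gF_i) = d(\varphi(x), F_i') < D$ for every $i$. Since $\{gF_i\}$ is the $G$-translate of a fixed transverse configuration in $Y$ and $G$ acts by isometries on $X \subset Y$, the intersection $\bigcap_i \Nbhd_D(gF_i)$ has diameter bounded by a uniform constant $C_0 = C_0(D, G)$ independent of $x$, and hence $d(\varphi(x), \pi(gx)) < C_0$.

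Finally I would pass to every $\gamma \in \Gamma$. The generic set is dense in $X$, so for any $\gamma$ we can pick a generic $x$ with $d(x, \pi(\gamma)) \le 1$. The $\Gamma$-orbit point of $X$ nearest to $x$ lies within bounded $X$-distance of $\gamma$, so the $(L,C)$-quasi-isometric embedding estimate, together with Lubotzky--Mozes--Raghunathan's quasi-isometry between the word metric on $\Gamma$ and the restriction of the $X$-metric \cite{LMR}, yields $d(\varphi(x), \varphi(\gamma)) \le C_1$ for a uniform constant $C_1$. Combining with the previous bound and the fact that $g$ acts by isometries,
\begin{equation*}
d(\varphi(\gamma), \pi(g\gamma)) \le d(\varphi(\gamma), \varphi(x)) + d(\varphi(x), \pi(gx)) + d(\pi(gx), \pi(g\gamma)) \le C_1 + C_0 + 1.
\end{equation*}
The principal obstacle I expect is verifying the uniform transversality constant $C_0$: one must show that for the $G$-translated family $\{gF_i\}$ through a common point $gx$ in $Y$, the intersection of the $D$-neighborhoods has diameter bounded independently of $x$. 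This is plausible because the family is a $G$-translate of a single basepoint configuration, but it requires choosing the family $\{F_i\}$ equivariantly and measurably in $x$, which is the purpose of the explicit reference configuration $\{k_\alpha A\}$ built into $\mathcal{F}$ in Section~\ref{subsection:goodflats}.
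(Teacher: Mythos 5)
Your proposal is correct and follows essentially the same route as the paper: identify $F'=gF$ via the boundary map, use Proposition~\ref{Dcloseprop} on transverse flats through a generic point to trap $\varphi$ of that point in a bounded intersection of neighborhoods of the image flats, and then propagate to all of $\Gamma$ by choosing the generic point uniformly close to $\pi(\gamma)$ and using coarse Lipschitz control. The only cosmetic difference is that the paper uses a pair of ``orthogonal'' flats in the sense of Eskin's Lemma~7.2 (where the tubular-neighborhood intersection bound is built into the definition) rather than the full Weyl-pattern family $\{gA, gk_\alpha A\}$, and it secures the generic point by intersecting $\Omega_\delta'$ with a compact piece of the fundamental domain.
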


\begin{proof}
By orthogonal flats, we mean a pair of flats that intersect at one point, and moreover the intersection of their $r$-tubular neighborhoods is contained in a ball of radius $\lambda r$ for some fixed $\lambda$ (see lemma 7.2 in \cite{Esk}). Let $\Omega_\delta '\subset \Omega_\delta$, as a subset of a fundamental domain for $\Gamma\backslash G$, consists of elements $h$ that have two orthogonal flats in $\mathcal{F}$ passing through. To be clear, this means there exist $ k_1,k_2\in K$ such that $hk_1,hk_2\in \Omega_\delta$. Then by ergodic and Fubini's theorem, there is $c$, depends only on the space $X$, such that $\mu(\Omega_\delta ')>1-c\delta$. Intersect with a compact subset of the fundamental domain, we get a set $\Omega_{\delta,C}\subset\Omega_\delta '$ such that $\mu(\Omega_{\delta,C})>1-2c\delta$, and diameter of $\Omega_{\delta,C}$ is smaller than $C$.

For any $\gamma\in \Gamma$, pick $u\in\Omega_{\delta,C}$, then $d(\pi(\gamma),\pi(\gamma u))<C$ and there are flats $F_1, F_2\in \mathcal{F}$ orthogonal at $\pi(\gamma u)$. There exists $F_1', F_2'\subset Y$ such that $[\varphi(F_1)]=[F_1']$, and $[\varphi(F_2)]=[F_2']$. Moreover, we know that $g$ agrees with $\overline{\varphi}$ on good apartments. Thus $F_1'(\infty)=gF_1(\infty)$, $F_2'(\infty)=gF_2(\infty)$, and it follows that $F_1'=gF_1$, $F_2'=gF_2$. Therefore, $F_1', F_2'$ are orthogonal at $\pi(g\gamma u)$. But $\varphi(\pi(\gamma u))$ is $D$-close to both flats $F_1', F_2'$ as $F_1,F_2$ are both sub-$\theta_\delta$-diverging w.r.t.~ $\pi(\gamma u)$. Hence, $d(\varphi(\gamma u), \pi(g\gamma u))<D'$, where $D'$ depends on $D$ and $\lambda$ we choose. Note that $\gamma u$ is $C$-close to the lattice $\Gamma$, implies $d(\varphi(\gamma),\varphi(\gamma u))$ is bounded by a universal amount too. Hence there is a constant $C$ such that $d(\varphi(\gamma),\pi(g\gamma))<C$ for all $\gamma\in \Gamma$.
\end{proof}

\begin{proof}[Proof of theorem \ref{mainthm}]
Since $\varphi: \Gamma\to \Lambda$, and $\varphi$ is uniformly close to the action of $g$ on $\Gamma$ by previous proposition, we get that $g\Gamma$ is contained in a finite neighborhood of $\Lambda$.

By \cite[Theorem 1.3]{Sha}, there is a closed subgroup $H<G'$ containing $\Gamma$, such that $\overline{p'( g\Gamma)}=p'(gH)$ here $p':G'\to \Lambda\backslash G'$ is the projection, and the bar stands for closure of a set in $\Lambda\backslash G'$. In other words, $\overline{\Lambda g\Gamma}=\Lambda gH$. This implies that  $\overline{\Lambda g\Gamma g^{-1}}=\Lambda gHg^{-1}$, and it follows that $\Gamma$ normalizes $H^{0}$. Since $\Gamma$ is a lattice in $G$, by Borel density, $G$ normalizes $H^0$ as well. This implies that $H^0\cap G = G$ or $H^0\cap G =\{1\}$.
On the other hand as $g\Gamma$ is in a finite neighborhood of $\Lambda$, $\overline{p'(g\Gamma)}$ is compact. By our assumption, $H$ is discrete, and the orbit $\Lambda g\Gamma=\Gamma gH$ consists of finitely many points. It follows that orbit $\Lambda g\Gamma g^{-1}=\Lambda gHg^{-1}$ also consists of finitely many points. Therefore there exists $ \Gamma '<\Gamma$ of finite index such that $g\Gamma' g^{-1} < \Lambda$. Note that the map $g: \gamma\mapsto g\gamma$ is uniformly close to the homomorphism $Ad_g: \gamma\mapsto g\gamma g^{-1}$. Hence, the quasi-isometric  embedding map $\varphi$ is uniformly close to a virtually monomorphism $\Gamma \to \Lambda$.
\end{proof}

\noindent{\bf Remark:}
Without our assumption on the non-existence of continuous group with compact orbit, we could derive the following:

In the case  $H^0\cap G = G$. In this case $H$ is a subgroup containing $G$ and intersects $\Lambda$ in a uniform lattice in $H$. Therefore the quasi-isometric embedding is uniformly close to a projection of a discrete subgroup to a uniform lattice in $H$ composed with the inclusion of that lattice into $\Lambda$.

In the case  $H^0\cap G =\{1\}$. The case $H$ is discrete has been treated above when we have the assumption. If $H$ is continuous, then $H^0$ is an algebraic subgroup that contains a finite index subgroup of $\Gamma$, hence contains $G$ as well. This contradicts with $H^0\cap G =\{1\}$.

\begin{appendix}

\section{by Skip Garibaldi, D.~B.~McReynolds,
\\ Nicholas Miller, and Dave Witte Morris}

This appendix constructs examples where Condition~(2) of Theorem \ref{mainthm} holds, and also identifies a few situations in which the condition is impossible to satisfy.
The main results are Examples~\ref{SLnRinSLnCBad}, \ref{SO6bad}, \ref{SO2rBad}, \ref{SOequalBad} and Propositions~\ref{NoSLZi}, \ref{SOinSO}.

\begin{lem} \label{DivAlgForAEg}
If $F/\QQ$ is an imaginary quadratic extension, then for every $n \ge 2$, there is a central division algebra $D$ of degree $2n$ over~$\QQ$ such that $D$ splits over~$\RR$ and $D \otimes_{\QQ} F$ is not a division algebra.
\end{lem}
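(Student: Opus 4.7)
The plan is to construct $D$ by prescribing its local invariants via the Albert--Brauer--Hasse--Noether fundamental exact sequence
$$0 \to \mathrm{Br}(\mathbb{Q}) \to \bigoplus_v \mathrm{Br}(\mathbb{Q}_v) \xrightarrow{\sum \mathrm{inv}_v} \mathbb{Q}/\mathbb{Z} \to 0,$$
and to exploit the fact that for the imaginary quadratic extension $F/\mathbb{Q}$, the restriction $\mathrm{Br}(\mathbb{Q}_v) \to \mathrm{Br}(F_w)$ on local invariants is multiplication by the local degree $[F_w:\mathbb{Q}_v]$, which equals $2$ at any non-split prime. I will use freely the standard facts that for number fields the index of a central simple algebra equals the order of its Brauer class, and that the order of a global class is the LCM of its local invariant orders.

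By Chebotarev density, there exist infinitely many rational primes inert in~$F$; I pick two distinct such primes $p_1, p_2$. I then define a class $[D] \in \mathrm{Br}(\mathbb{Q})$ by setting
$$\mathrm{inv}_{p_1}([D]) = \tfrac{1}{2n}, \qquad \mathrm{inv}_{p_2}([D]) = -\tfrac{1}{2n},$$
and $\mathrm{inv}_v([D]) = 0$ at every other place (including~$\infty$). The invariants sum to zero, so this is a legitimate Brauer class, and its order in $\mathrm{Br}(\mathbb{Q})$ is $\mathrm{lcm}(2n, 2n) = 2n$. The division algebra $D$ representing this class therefore has degree $2n$ over $\mathbb{Q}$, and it splits over $\mathbb{R}$ since $\mathrm{inv}_\infty([D]) = 0$.

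Finally, base change to $F$: at the two primes $w_1, w_2$ of $F$ above $p_1, p_2$, the local degrees are $2$ (inert case), so the restricted invariants are $\pm 2/(2n) = \pm 1/n$, while all other invariants remain~$0$. Hence $[D \otimes_{\mathbb{Q}} F]$ has order $n$ in $\mathrm{Br}(F)$, so $D \otimes_{\mathbb{Q}} F$ has index $n$. Since $n \geq 2$ and $n < 2n$, we get $D \otimes_{\mathbb{Q}} F \cong M_2(D')$ for some central division algebra $D'$ of degree $n$ over $F$; in particular $D \otimes_{\mathbb{Q}} F$ is not a division algebra. There is essentially no obstacle here once the role of non-split primes is identified: the whole construction is a direct assembly of local invariants.
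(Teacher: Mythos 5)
Your proof is correct, and it takes a genuinely different route from the paper's. The appendix builds $D$ structurally, as a tensor product $D_2 \otimes_{\QQ} D_n$ of a quaternion algebra $D_2$ split by both $\RR$ and $F$ with a degree-$n$ division algebra $D_n$, so that $D \otimes_{\QQ} F \cong M_2(D_n \otimes_{\QQ} F)$ is visibly not a division algebra; the burden is then to arrange that $D_2 \otimes_{\QQ} D_n$ is itself a division algebra. You instead prescribe the local invariants of $D$ directly ($\pm 1/(2n)$ at two primes inert in $F$, zero elsewhere) and read off everything --- the degree, the splitting at $\RR$, and the drop of index after base change --- from the Albert--Brauer--Hasse--Noether exact sequence, the equality of index and exponent over number fields, and the formula $\mathrm{inv}_w = [F_w:\QQ_v]\,\mathrm{inv}_v$; all of these are quoted and applied correctly. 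Your version is uniform in $n$ and fully explicit, and in particular it sidesteps the point the paper only gestures at for even $n$ (``local restrictions that can be arranged with some mild care''). In fact, for even $n$ the class $[D_2]+[D_n]$ has order dividing $\mathrm{lcm}(2,n)=n$, so by index $=$ exponent no such tensor product can be a division algebra of degree $2n$; the paper's construction genuinely needs modification in that case, and your direct assembly of local invariants is exactly the kind of argument that fixes it.
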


\begin{proof}
Let $D_2$ be any quaternion division algebra over~$\QQ$ that splits over both $\RR$ and~$F$. For example, if $F = \QQ[\sqrt{a}]$, we can take $D_2 = \left( \frac{\textstyle a,b}{\textstyle \QQ} \right)$ for any positive rational number $b$ that is not a norm in~$F$. Next, let $D_n$ be any central division algebra of degree~$n$ over~$\QQ$, such that $D_2 \otimes_{\QQ} D_n$ is a division algebra but $D_n$ splits over~$\RR$. If $n$ is odd, then $D_n$ can be any central division algebra of degree~$n$ over~$\QQ$. For even $n$, there are local restrictions that can be arranged with some mild care. The sought after algebra $D$ can be taken to be $D = D_2 \otimes_{\QQ} D_n$.
\end{proof}

\begin{eg} \label{SLnRinSLnCBad}
For $n \ge 2$, there is a noncocompact lattice in $\SL_{2n}(\CC)$ such that $\Lambda \cap \SL_{2n}(\RR)$ is a cocompact lattice.
\end{eg}

\begin{proof}
Let $D$ and~$F$ be as in Lemma~\ref{DivAlgForAEg}. Since $D \otimes_{\QQ} F$ is a central algebra over~$F$ and $F$ is imaginary, we know that $\SL_1(D \otimes_{\QQ} F)$ is a $\QQ$-form of $\SL_{2n}(\CC)$. Also, it is isotropic because $D \otimes_{\QQ} F$ is not a division algebra. Moreover, as $D$ is a central division algebra over~$\QQ$ and splits over~$\RR$, we know that $\SL_1(D)$ is an anisotropic $\QQ$-form of $\SL_{2n}(\RR)$. By construction, $\SL_1(D)$ is contained in $\SL_1(D \otimes_{\QQ} F)$. Passing to the $\ZZ$-points of these groups provides the desired lattices.
\end{proof}

However, the following result implies that the lattice in Example~\ref{SLnRinSLnCBad} cannot be conjugate to $\SL_n \bigl( \ZZ[i] \bigr)$.

\begin{prop} \label{NoSLZi}
If $n \ge 3$ and $\ints$ is the ring of integers of an imaginary quadratic extension~$F/\QQ$, then there does not exist a closed subgroup~$G$ of $\SL_n(\CC)$ such that $G$ is isogenous to $\SL_n(\RR)$ and $G \cap \SL_n(\ints)$ is a cocompact lattice in~$G$.
\end{prop}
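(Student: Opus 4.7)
The plan is to work with the Weil restriction $\mathcal{T} := R_{F/\QQ}(\SL_n)$, a connected $\QQ$-algebraic group with $\mathcal{T}(\ZZ) = \SL_n(\ints)$ and $\mathcal{T}(\RR) \cong \SL_n(\CC)$. Suppose for contradiction that such a $G$ exists, and put $\Lambda := G \cap \SL_n(\ints)$, a cocompact lattice in $G$. The strategy proceeds in three steps: (i) realize $G$ as (essentially) the real points of a $\QQ$-subgroup $H \subset \mathcal{T}$ with $H(\ZZ)$ commensurable to $\Lambda$, so that $H$ is necessarily $\QQ$-anisotropic; (ii) classify such $H$ as either an inner or an outer $\QQ$-form of $\SL_n$ admitting a $\QQ$-embedding into $\mathcal{T}$; (iii) show that both possibilities are incompatible with the hypotheses.

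For step (i) I would take $H$ to be the $\QQ$-Zariski closure of $\Lambda$ in $\mathcal{T}$. Borel density (applicable as $G$ has no compact factors) gives $G \subset H(\RR)$. Since $G$ has real rank $n-1 \ge 2$, Margulis arithmeticity applied to the cocompact lattice $\Lambda$, together with the a priori containment $\Lambda \subset \mathcal{T}(\ZZ)$, identifies $H$ as a connected absolutely almost simple $\QQ$-group with $H(\RR)^0 = G$ and $H(\ZZ)$ commensurable to $\Lambda$. Cocompactness of $\Lambda$ then forces $H$ to be $\QQ$-anisotropic by Godement's compactness criterion.

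For step (ii), since $H_{\overline{\QQ}} \cong \SL_n$, the Galois-cohomological classification says $H$ is either \emph{inner} --- with $H = \SL_1(A)$ for a central simple $\QQ$-algebra $A$ of degree $n$ --- or \emph{outer}, of the form $\SU(B,\tau)$, where $B$ is central simple of degree $n$ over some quadratic extension $E/\QQ$ equipped with an involution $\tau$ of the second kind fixing $\QQ$. The Weil restriction adjunction $\operatorname{Hom}_\QQ(H,\mathcal{T}) = \operatorname{Hom}_F(H_F,\SL_n)$ turns the $\QQ$-embedding $H \hookrightarrow \mathcal{T}$ into a closed $F$-embedding $H_F \hookrightarrow \SL_n$, which by dimension count is an $F$-isomorphism $H_F \cong \SL_n$. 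In the outer case this forces $E = F$ (otherwise $H_F$ remains an outer $F$-form and cannot equal the split $\SL_n$), whence $H(\RR) \cong \SU(p,q)$ with $p+q=n$ since $F \otimes_\QQ \RR \cong \CC$; its real rank $\min(p,q) \le n/2 < n-1$ rules out isogeny with $\SL_n(\RR)$. Hence $H$ must be inner, and the adjunction then yields $A \otimes_\QQ F \cong M_n(F)$, that is, $F$ splits $A$.

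For step (iii), the splitting of $A$ by the degree-$2$ extension $F$ gives $\operatorname{ind}(A) \mid 2$, so writing $A = M_k(D)$ for a central $\QQ$-division algebra $D$ of degree $\operatorname{ind}(A) \in \{1,2\}$, we find $k = n/\operatorname{ind}(A) \ge 2$ (using $n \ge 3$). Hence $H \cong \SL_k(D)$ is $\QQ$-isotropic, contradicting the $\QQ$-anisotropy obtained in step (i). The main obstacle will be step (i): verifying that the $\QQ$-form furnished by Margulis' theorem is \emph{compatible} with the given containment $\Lambda \subset \mathcal{T}(\ZZ)$, so as to produce a genuine $\QQ$-embedding $H \hookrightarrow \mathcal{T}$; once this is in place the Brauer-theoretic endgame in steps (ii)--(iii) is routine.
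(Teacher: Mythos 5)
Your proof is correct and follows essentially the same route as the paper's: both extract an anisotropic $\QQ$-form of $\SL_n(\RR)$ from the integrality hypothesis, show that it splits over~$F$ and is an inner form, and then derive the contradiction from the fact that the quadratic extension $F/\QQ$ can only split a central simple algebra of index at most~$2$, which is incompatible with anisotropy when $n \ge 3$. The paper packages this at the Lie-algebra level (taking $\Lie G \cap \sl_n(F)$ directly, with no appeal to Weil restriction or Margulis arithmeticity) and rules out the outer case by observing the form is inner over both $\RR$ and~$F$, hence over their intersection $\QQ$, rather than via the real rank of $\SU(p,q)$ --- but the substance is identical.
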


\begin{proof}
Let $\Lie G \subseteq \sl_n(\CC)$ be the Lie algebra of~$G$. If $G \cap \SL_n(\ints)$ is a cocompact lattice in~$G$, then $\Lie G_{\QQ} = \Lie G \cap \sl_n(F)$ is an anisotropic $\QQ$-form of~$\Lie G$. Consequently $\Lie G_{\QQ} \otimes_{\QQ} F$ is an $F$-Lie subalgebra of $\sl_n(F)$ and since it is of type~$A_{n-1}$, it cannot be a proper subalgebra. Therefore $\Lie G_{\QQ} \otimes_{\QQ} F = \sl_n(F)$ and $\Lie G_{\QQ}$ splits over~$F$.

Since $\Lie G_{\QQ}$ splits over both $\RR$ and~$F$, so it is inner over both of these fields. Therefore, it is inner over their intersection, which is~$\QQ$; that is, $\Lie G_{\QQ}$ is an inner~$\QQ$-form.  From the classification of (anisotropic, inner) $\QQ$-forms of $\SL_n$, this implies $\Lie G_{\QQ} = \sl_1(D)$, for some central division algebra~$D$ over~$\QQ$. As $\Lie G_{\QQ}$ splits over the quadratic extension~$F$, we see that $D$ must be a quaternion. Consequently, $n = 2$, which contradicts the assumption that $n \ge 3$.
\end{proof}

We now turn to the task of giving some restrictions on the possible type of $H$, if such an $H$ exists.
That is accomplished by Corollary \ref{OnlyA}.
G.~Harder proved the following theorem under the assumption that the group is not of type~$E_8$, but J.~Tits \cite[p.~669]{Tits-StronglyInner} pointed out that this assumption is no longer needed, because V.~Chernousov subsequently proved the Hasse principle (Harder's Satz 4.3.1) for~$E_8$.

\begin{thm}[Harder {\cite[Satz~4.3.3]{Harder-BerichtNeuere}}] \label{IsotropicIffLocally}
If a vertex of the Tits index of a simple $\QQ$-group is circled at every place, then it is circled in the Tits index over~$\QQ$.
\end{thm}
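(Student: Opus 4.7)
My plan is to reduce Theorem~\ref{IsotropicIffLocally} to the Hasse principle for Galois cohomology of semisimple groups over~$\QQ$. First I would reduce to the case that $G$ is absolutely simple and simply connected: replacing $G$ by its simply connected cover, or by its adjoint quotient, does not change the Tits index, since the isotropy class of a parabolic subgroup is a purely diagram-theoretic invariant that descends through central isogenies. Next I would exhibit $G$ as an inner twist of its unique quasi-split inner form $G^*$ by a cocycle $\xi\in H^1(\QQ, G^*_{ad})$. The Tits index of $G$ is then encoded by two pieces of data: the $*$-action on the Dynkin diagram of $G^*$ (which depends only on $G^*$, hence only on the outer class and is obviously local-global), together with the set of circled Galois-orbits, which depends on how $\xi$ interacts with the standard parabolics of~$G^*$.

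The key translation is the following: for a Galois-stable subset $\Theta$ of simple roots, the twisted group $G$ admits a $\QQ$-parabolic of type $\Theta$ if and only if $\xi$ lies in the image of $H^1(\QQ, P^*_\Theta/R_u) \to H^1(\QQ, G^*_{ad})$, where $P^*_\Theta$ is the corresponding standard parabolic in $G^*$. A vertex $v$ is circled at a place~$w$ precisely when the restriction of~$\xi$ to $H^1(\QQ_w, G^*_{ad})$ admits such a lift for some $\Theta$ not meeting the Galois orbit of~$v$. Thus the theorem becomes the statement that a cocycle class which locally everywhere factors through the Levi subgroup $L_\Theta$ of $P^*_\Theta$ must globally factor through it. I would obtain this from the injectivity of the canonical map
\[
H^1(\QQ, L_\Theta^{sc}) \to \prod_v H^1(\QQ_v, L_\Theta^{sc})
\]
on the simply-connected cover of the relevant Levi, together with a diagram chase comparing the torsor for $G^*$ with the candidate torsor for $L_\Theta$.

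The substantive input is thus the Hasse principle for $H^1$ of simply-connected semisimple groups over number fields. For classical types this is Kneser's theorem, which can be proved directly via the Hasse--Minkowski theorem for quadratic forms, Landherr's theorem for Hermitian forms, and the Albert--Brauer--Hasse--Noether theorem for central simple algebras. For types $G_2$, $F_4$, $E_6$, $E_7$ it is Harder's original case analysis, using descriptions via octonion and Jordan algebras. The main obstacle, historically, is type $E_8$: Harder's original proof assumed this case, and it was completed only later by Chernousov, who showed that the unique anisotropic $\QQ$-form of~$E_8$ is determined up to isomorphism by its completions. Once all simple types are in hand, the cocycle-theoretic argument above globalizes the circling of each vertex and yields the theorem.
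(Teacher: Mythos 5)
The paper supplies no proof of this statement: it is quoted as a black box from Harder \cite[Satz~4.3.3]{Harder-BerichtNeuere}, and the only mathematical content in the surrounding text is the remark (attributed to Tits) that Harder's original exclusion of type $E_8$ can be dropped because Chernousov later proved the Hasse principle (Harder's Satz~4.3.1) for $E_8$. So there is nothing in the paper to compare your argument against line by line; what you have written is a reconstruction of the strategy of the cited proof, and as a sketch it is the right one. Reducing to the absolutely simple, simply connected case via isogeny-invariance of the index, encoding the circling of a Galois-stable orbit of vertices as the condition that the twisting class $\xi$ come from the standard parabolic $P^*_\Theta$ (equivalently, that the twisted flag variety ${}_\xi(G^*/P^*_\Theta)$ have a rational point), and then globalizing via the Hasse principle for simply connected groups is exactly how Harder's Satz~4.3.3 is deduced from his Satz~4.3.1; you also correctly isolate the $E_8$ caveat, which is precisely the point the paper's preamble addresses.

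Two joints in your sketch are looser than they should be. First, the lifting criterion must be stated for the image $\overline{P^*_\Theta}$ of the parabolic in $G^*_{ad}$, the group in which $\xi$ actually lives, and the equivalence with the existence of a $\QQ$-parabolic of type $\Theta$ in the twisted group goes through the twisting bijection for the fibration $G^*_{ad} \to G^*_{ad}/\overline{P^*_\Theta}$ rather than being a definition. Second, and more substantively, injectivity of $H^1(\QQ, L^{sc}_\Theta) \to \prod_v H^1(\QQ_v, L^{sc}_\Theta)$ is not by itself enough to run the final diagram chase: the comparison takes place in adjoint cohomology, so one also needs Kneser's vanishing $H^1(\QQ_p, \widetilde{G^*}) = 1$ at finite places and the local-global principle for $H^2$ of the center (Albert--Brauer--Hasse--Noether) to control the boundary maps between simply connected and adjoint $H^1$. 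These are exactly the ingredients Harder assembles, so your outline is faithful to the cited proof, but as written it would not close without them.
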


\begin{cor} \label{OnlyA}
Assume $H$ is an almost simple, closed, noncompact subgroup of $\GL_{n}({\RR})$, for some~$n$. If $\Rrank {H} \ge 2$ and $H \cap \GL_n(\ZZ)$ is a cocompact lattice in~$H$, then $H$ is either of type~$A_n$, for some~$n$, or of type ${}^1\!E_{6,2}^{28}$ \textup(over~$\RR$\textup).
\end{cor}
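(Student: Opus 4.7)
The plan is to combine Harder's Hasse principle (Theorem~\ref{IsotropicIffLocally}) with Kneser's classification of anisotropic groups over $p$-adic fields, and then carry out a case-by-case check on Tits indices.

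First I would invoke the Godement / Borel--Harish-Chandra compactness criterion: cocompactness of $H\cap\GL_n(\ZZ)$ in $H$ forces $H$ to descend to a $\QQ$-anisotropic $\QQ$-group, so no Galois orbit of vertices is circled in the $\QQ$-Tits index, whereas the hypothesis $\Rrank H \ge 2$ means at least one orbit is circled in the $\RR$-Tits index. Applying Theorem~\ref{IsotropicIffLocally} in contrapositive form, each such real-circled orbit must be uncircled at some finite place $\QQ_p$, i.e., it must lie in the $\QQ_p$-anisotropic kernel of $H$. Kneser's theorem $H^1(\QQ_p,G^{sc})=1$ then implies that the only anisotropic absolutely simple simply connected groups over $\QQ_p$ are the norm-one groups $\SL_1(D)$ of central division algebras, so every $\QQ_p$-anisotropic kernel is a product of groups of inner type $A$.

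The heart of the argument will be a walk through Tits' classification of indices. For the exceptional types $G_2$, $F_4$, and $E_8$ (trivial outer automorphism group), Kneser's theorem forces every $\QQ_p$-form to be split, so any vertex circled at $\RR$ would be circled at every place and hence at $\QQ$ by Harder, contradicting $\QQ$-anisotropy. For type $B_n$ I would use Hasse--Minkowski together with the standard bound that anisotropic $\QQ_p$-quadratic forms have dimension at most $4$: a $\QQ$-anisotropic quadratic form in $2n+1 \ge 5$ variables must be $\RR$-definite, forcing the real group to be compact. Analogous reasoning using hermitian forms over quaternion division algebras handles $C_n$ and inner $D_n$. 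For the outer classical types and for $E_7$, ${}^2\!E_6$, and the non-surviving inner $E_6$ indices, I would inspect Tits' tables and verify that the combinatorics of the real index cannot be supported by an inner-type-$A$ anisotropic kernel at any finite place once the real rank is at least $2$. What survives is exactly type $A_n$ together with the single inner $E_6$ index ${}^1\!E_{6,2}^{28}$, which can be realized via a suitable Albert division algebra.

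The main obstacle will be this final case analysis: the exceptional indices for $E_6$ and $E_7$ carry enough Tits--Satake combinatorics that each one must be inspected individually, and the genuinely delicate point is showing that ${}^1\!E_{6,2}^{28}$ is the \emph{only} non-$A$ index whose collection of real-circled orbits can all sit inside an inner type $A$ anisotropic kernel over some $\QQ_p$.
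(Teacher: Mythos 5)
Your proposal is correct and is essentially the paper's argument: both rest on reducing to a $\QQ$-anisotropic $\QQ$-form via Borel density/Godement and then applying Harder's local--global theorem for Tits indices together with an inspection of the admissible real and $p$-adic indices. The only differences are presentational --- you run Harder in contrapositive form and organize the $p$-adic side via Kneser's $H^1$-vanishing (plus Meyer/Hasse--Minkowski for type $B$), whereas the paper directly exhibits, for each excluded type, a single vertex circled at every place --- but the table-checking content and the conclusion are the same.
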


\begin{proof}
Since $H \cap \GL_n(\ZZ)$ is a lattice in~$H$, the Borel Density Theorem implies that $H$ is (of finite index in) a $\QQ$-subgroup of $\GL_n(\RR)$. Furthermore, since this lattice is cocompact, we know that $H$ is anisotropic over~$\QQ$. This means that no vertex is circled in the Tits index of the $\QQ$-group~$H$. However, by inspection of the list of Tits indices in \cite[pp.~55--61]{Tits-Classification}, we see that for each type except ${}^{1,2}\!A_n$ and ${}^{1,2}\!E_6$, there is a vertex that is circled for all $\RR$-forms of rank $\ge 2$ and also all $p$-adic forms:
	\begin{itemize} \itemindent=2\parindent
	\item[$B_n$:] the leftmost vertex is circled.
	
	\item[$C_n$:] the 2nd vertex from the left is circled.
	
	\item[${}^{1,2,3,6}D_n$:] the 2nd vertex from the left is circled (in $D_4$, this is the central vertex).
	
	\item[$E_7$:] the rightmost vertex is circled.
	
	\item[$E_8$:] the leftmost vertex is circled.
	
	\item[$F_4$:] all vertices are circled.
	
	\item[$G_2$:] both vertices are circled.
	\end{itemize}
Furthermore, for ${}^{1,2}\!E_6$, the end of the short leg is circled in every $p$-adic Tits index, and is circled in every isotropic index over~$\RR$ except ${}^1\!E_{6,2}^{28}$. Therefore, Theorem~\ref{IsotropicIffLocally} implies that $H$ is of type $A_n$ or ${}^1\!E_{6,2}^{28}$.
\end{proof}

For the special case where $G$ is isogenous to $\SO(n,m)$ and $G'$ is isogenous to $\SO(n,m+\ell)$, we now give some numerical conditions that imply hypothesis~(2) of Theorem \ref{mainthm} is satisfied.

\begin{prop} \label{SOinSO}
Assume
	\begin{itemize}
	\item $G \le H \le G'$,
	\item $\Lambda$ is a noncocompact lattice in~$G'$, such that $\Lambda \cap H$ is cocompact in~$H$,
	\item $G$ is isogenous to $\SO(n,m)$,
	\item $G'$ is isogenous to $\SO(n,m+\ell)$,
	\item $H$ is \textup(closed and\/\textup) connected,
	and
	\item $2 \le n \le m \le m+\ell$.
	\end{itemize}
If $n + m \ge 7$, then $\ell \ge n + m$.
\end{prop}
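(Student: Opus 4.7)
The plan is to combine Corollary~\ref{OnlyA} with two representation-theoretic dimension counts to force $\ell\ge n+m$. Since $\Lambda\cap H$ is a cocompact lattice in $H$, Borel density implies $H$ is defined over $\QQ$, and cocompactness forces $H$ to be $\QQ$-anisotropic; the hypotheses $G\subseteq H$ and $\Rrank G=n\ge 2$ give $\Rrank H\ge 2$. After reducing to the almost-simple factor $H_1$ of $H$ into which the simple group $G$ embeds (the commuting factors only shrink the ambient subspace in which $H_1$ acts), Corollary~\ref{OnlyA} says $H_1$ is of type $A_{d-1}$ for some $d$, or of the exceptional type ${}^1E_{6,2}^{28}$.

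In the type-$A$ case, I would run two dimension inequalities over $\CC$. First, the Lie-algebra inclusion $\so_{n+m}(\CC)\subseteq \sl_d(\CC)$ is a faithful $d$-dimensional complex representation of $\so_{n+m}$; for $n+m\ge 7$, the smallest such has dimension $n+m$ (the defining representation), since the half-spin representations have dimension $2^{\lfloor(n+m-1)/2\rfloor}$, which equals $n+m$ only at $n+m=8$ and strictly exceeds it at $n+m=7$ and for $n+m\ge 9$. Hence $d\ge n+m$. Second, the inclusion $\sl_d(\CC)\subseteq \so_{n+m+\ell}(\CC)$ is a faithful self-dual representation of $\sl_d(\CC)$ carrying a symmetric invariant bilinear form; decomposing into isotypic components, I would observe that since for $d\ge 3$ the standard representation $V$ is not self-dual, it can appear only via the hyperbolic pair $V\oplus V^*$ of dimension $2d$, every irreducible self-dual representation has dimension at least $d^2-1$ (the adjoint), and the only small irreducible $\Lambda^{d/2}V$ carrying a symmetric pairing (rather than an alternating one) occurs for $d\equiv 0\pmod 4$, where for $d\ge 8$ its dimension $\binom{d}{d/2}$ already exceeds $2d$; the sole small exception $\Lambda^2V$ of $\sl_4$, of dimension $6$, is ruled out by $d\ge n+m\ge 7$. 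Therefore $n+m+\ell\ge 2d\ge 2(n+m)$, giving $\ell\ge n+m$.

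For the exceptional $E_6$ case, the real-rank hypothesis forces $n=2$, hence $m\ge 5$. The inclusion $\so_{2+m}(\CC)\subseteq \mathfrak{e}_6(\CC)$ forces $2+m\le 10$ by inspection of maximal-rank subalgebras of $\mathfrak{e}_6$ (the largest orthogonal one is $D_5\cong \so_{10}$, visible via $D_5\oplus T_1\subset E_6$ in the extended Dynkin diagram). The $27$-dimensional fundamental representation of $E_6$ is not self-dual, so the smallest faithful self-dual symmetric representation has dimension $54$ (the adjoint has dimension $78$). Hence $2+m+\ell\ge 54$, yielding $\ell\ge 52-m\ge 44\ge 2+m=n+m$. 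The main obstacle I anticipate is the precise bookkeeping in the $A$-type dimension count: verifying that every faithful self-dual representation of $\sl_d$ with symmetric invariant pairing has dimension at least $2d$ once $d\ge 7$, which amounts to a case analysis of irreducible summands and the parity of exterior-power pairings that cleanly handles the isolated $d=4$ exception safely outside our range.
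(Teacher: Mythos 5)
There is a genuine gap at the very first step: the ``reduction to the almost-simple factor $H_1$ of $H$'' followed by an application of Corollary~\ref{OnlyA} to $H_1$. Corollary~\ref{OnlyA} requires that the group it is applied to meet $\GL_N(\ZZ)$ in a cocompact \emph{lattice}, and this fails for an individual factor of $H$ when $\Lambda\cap H$ is an irreducible lattice in a product. Example~\ref{SOequalBad} makes this concrete: there $H=\SO(n,m)\times\SO(n+m)$, the lattice $\Lambda\cap H$ is an irreducible restriction-of-scalars lattice, the factor containing $G$ is of orthogonal type (neither $A_{d-1}$ nor ${}^1\!E_{6,2}^{28}$), and its intersection with $\Lambda$ is not a lattice at all. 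That example has $\ell=n+m$, so it is consistent with the proposition, but it refutes your intermediate claim; hence your argument cannot be run as a direct proof the way you have set it up. (You also never verify that $H$ is reductive, hence semisimple up to compact center --- which is needed even to speak of its almost-simple factors; this requires the observation that $H$ cannot lie in a proper parabolic of $G'$ because $\Rrank G=n=\Rrank G'$.)

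The repair is to argue by contradiction, assuming $\ell<n+m$ from the outset: this hypothesis is exactly what rules out the product case. Once $H$ is known to be semisimple and (after reducing to $\Lambda\cap H$ irreducible) isotypic, every faithful representation of each simple factor has dimension at least $n+m$, because $\so(n+m,\CC)$ injects into one factor and the others are isogenous to it; two factors acting faithfully on $\CC^{\,n+m+\ell}$ would then force $n+m+\ell\ge 2(n+m)$, contrary to $\ell<n+m$. So $H$ itself is almost simple and Corollary~\ref{OnlyA} applies to $H$, which is how the paper proceeds. From that point on your two dimension counts are essentially correct and parallel the paper's: in type $A$ you get $d\ge n+m$, and self-duality of the orthogonal representation together with the fact that the only nontrivial irreducible $\sl_d(\CC)$-modules of dimension $<2d$ are the non-self-dual $V$ and $V^{*}$ forces $n+m+\ell\ge 2d\ge 2(n+m)$ (your blanket claim that every self-dual irreducible has dimension at least $d^2-1$ is literally false --- $\Lambda^{d/2}V$ for $d=6$ has dimension $20$ --- but the bound you actually need, namely $\ge 2d$, does hold for $d\ge 7$). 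Your $E_6$ computation, which bounds $m$ via subalgebras of $\mathfrak{e}_6(\CC)$ rather than via a comparison of anisotropic kernels as the paper does, also goes through.
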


\begin{proof}
We proceed via contradiction and assume that $\ell< n+m$. First, we show that $H$ is reductive. If not, then it is contained in a proper parabolic subgroup~$P$ of~$G'$ \cite[Prop.~3.1(ii)]{BorelTits}. Letting $P = MAN$ be a Langlands decomposition, we know that the $\RR$-split torus~$A$ is nontrivial. As $G$ is contained in (a conjugate of) $M$ and $MA \subset G'$, we see that $\rank_{\RR} G < \rank_{\RR} G'$, which contradicts the observation that $\rank_{\RR} G = n = \rank_{\RR} G'$. Hence, $H$ is reductive. In fact, since $\rank_{\RR} G = \rank_{\RR}G'$, it is clear that the center of~$H$ must be compact. So there is no harm in assuming it is trivial, which means that $H$ is semisimple. Assuming, as we may, that $\Lambda \cap H$ is irreducible, we know that $H$ is isotypic. As $\ell < n + m$, we see that $H$ is almost simple. Since $\Rrank G'=n\ge 2$, the Margulis Superrigidity Theorem implies that $\Lambda$ is arithmetic. As $\Lambda$ is not cocompact, we know $\Lambda$ is commensurable with the $\ZZ$-points of $G'$ for some $\QQ$-structure on $G'$.  In particular, $\Lambda \cap H$ is commensurable with the $\ZZ$-points of~$H$.  Hence, Corollary~\ref{OnlyA} implies that either $H_{\CC}$ is isogenous to $\SL_r(\CC)$ (or $\SL_r(\CC) \times \SL_r(\CC)$), or $H$ is of type~${}^1\!E_{6,2}^{28}$.

\setcounter{case}{0}
\begin{case}
Assume $H_{\CC}$ is isogenous to $\SL_r(\CC)$ \textup(or $\SL_r(\CC) \times \SL_r(\CC)$\textup).
\end{case}
Since $n + m \ge 7$, the smallest dimension of a nontrivial representation of $\so(n+m,\CC)$ is $n + m$ \cite[Exer.\ 9 and 11 in \S7.1.4, pp.~340--341]{GoodmanWallach}, so $r \ge n + m$. Thus, $n+m+\ell < 2(n+m) \le 2r$.

Let $\rho$ be a nontrivial, irreducible subrepresentation of the representation of $\sl_r(\CC)$ induced by the inclusion of~$H$ in~$G'$. (Note that $\dim \rho < 2r$.) Since $r \ge n + m > 5$, we have $\binom{r}{k} \ge \binom{r}{2} > 2r$ for $1 < k < r$. Therefore, the highest weight of~$\rho$ must be a multiple of the highest weight of either the $r$-dimensional standard representation or its dual (cf.\ \cite[Exer.~8 in \S7.1.4, p.~340]{GoodmanWallach}). This implies that $\rho$ is not self-dual. On the other hand, the representation of~$H$ in $G' = \SO(n + m+\ell,\CC)$ obviously has an invariant, nondegenerate bilinear form, and is therefore self-dual. So $\rho$ cannot be the only nontrivial, irreducible subrepresentation. This implies that $n + m + \ell$ is at least twice the minimal dimension of a nontrivial, irreducible representation of $\sl_r(\CC)$. This dimension is~$r$ \cite[Exer.~8b in \S7.1.4, p.~340]{GoodmanWallach}, so we conclude that $n + m + \ell \ge 2r$, which contradicts the conclusion of the preceding paragraph.

\begin{case} \label{E6Case}
Assume $H$ is of type~${}^1\!E_{6,2}^{28}$.
\end{case}
This means the Tits index of~$H$ is

$$    \begin{tikzpicture}[scale=0.6]
 \fill(3,1)circle(.1cm);
 \fill(1,0)circle(.1cm);
 \fill(2,0)circle(.1cm);
 \fill(3,0)circle(.1cm);
 \fill(4,0)circle(.1cm);
 \fill(5,0)circle(.1cm);
 \draw[thin](3,0)--(3,1);
 \draw[thin](1,0)--(5,0);
 \draw (1,0) circle (.22cm);
 \draw (5,0) circle (.22cm);
\end{tikzpicture}
$$

As $G \subseteq H \subseteq G'$ and $\Rrank G = n = \Rrank G'$, we know that $n = \Rrank H = 2$. Also, the anisotropic kernels of $G \doteq \SO(2,m)$ and $H \doteq {}^1\!E_{6,2}^{28}$ are (isogenous to) $\SO(m-2)$ and $\SO(8)$, respectively. As $G \subseteq H$ (and $\Rrank G = \Rrank H$), we see that $m - 2 \le 8$. Furthermore, since $H \subseteq \SO(n,m+\ell)$, we have a nontrivial representation of $H$ on~$\RR^{n + m + \ell}$.
Now note that every Weyl-orbit of nonzero weights in the $E_6$ lattice has at least 27 elements (as follows from, for example, \cite[Thm.~1.12(a)]{Humphreys}), hence every nontrivial representation of $E_6$ has dimension at least~$27$.  So
	$$ n + m + \ell \ge 27 > 8 + 2(8) \ge 8 + 2(m-2) = 2(2 + m) = 2(n + m) , $$
and consequetly $\ell \ge n + m$ as desired.
\end{proof}

\begin{rem}
It can actually be shown that Case~\ref{E6Case} of the proof of Proposition~\ref{SOinSO} cannot occur in general, regardless of what $G$ and $G'$ are. Indeed, if an almost simple Lie group of type ${}^1\!E_{6,2}^{28}$ is contained in an almost simple Lie group~$G$, then one can show that $\Rrank G \ge 3$.
\end{rem}

We now show via two examples that the assumption $n + m \ge 7$ cannot be removed from the statement of Proposition~\ref{SOinSO}.

\begin{eg} \label{SO6bad}
Let $(n,m)$ be either $(2,4)$ or $(3,3)$, and let $q = 2k+m$, for any $k \ge 1$. Then there exists
	\begin{itemize}
	\item a subgroup~$G$ of $\SO(n, q)$ that is isogenous to $\SO(n,m)$,
	and
	\item a noncocompact lattice $\Lambda$ in $\SO(n,q)$,
	\end{itemize}
such that $\Lambda \cap G$ is cocompact in~$G$.
\end{eg}

\begin{proof}
We first note that in the case of $H_{\CC}$ being isogenous to $\SL_r(\CC)$ for some~$r$, the converse of Corollary~\ref{OnlyA} is true. Namely, that $H$ is isogenous to a group with an anisotropic $\QQ$-form.
Indeed, division algebras allow the construction of anisotropic $\QQ_p$-forms of $\SL_r$, so this is straightforward.
(We also point out that more generally the converse is true and follows from \cite[Thm.~B]{BorelHarder} but we only need this special case.)

Now let $G = \SO(n,m)$. Since $n + m = 6$ and $\SO_6$ is isogenous to $\SL_4$, the above paragraph tells us that $G$ has an anisotropic $\QQ$-form~$(G)_{\QQ}$. By the classification of $\QQ$-forms of type $D_3$ (and Meyer's Theorem), we see that $(G)_{\QQ} = \SU_3(B; D, \tau)$, where $D$ is a quaternion division algebra over~$\QQ$, $\tau$ is the reversion anti-involution on~$D$, and $B$ is a $\tau$-Hermitian matrix in $\Mat_3(D)$.

Let $(G')_{\QQ} = \SU_{k+3}(B \oplus I_k; D,\tau)$.
Since $(G)_{\QQ}$ is a $\QQ$-form of $G \doteq \SL_4(\RR)$, we know that $D$ splits over~$\RR$. Therefore $\SU_k(I_k; D, \tau)$ is a $\QQ$-form of $\SO(2k)$, and so $(G')_{\QQ}$ is a $\QQ$-form of $\SO(3, 2k+3) = G'$. The 2nd vertex from the end is circled in the Tits index at every place, hence Theorem~\ref{IsotropicIffLocally} implies that $(G')_{\QQ}$ is isotropic.
\end{proof}

\begin{eg} \label{SO2rBad}
For any even $q \ge 6$, there exists
	\begin{itemize}
	\item a subgroup~$G$ of $\SO(2, q)$ that is isogenous to $\SO(2,3)$,
	\item an almost simple subgroup $H$ of $\SO(2, q)$ that contains $G$,
	and
	\item a noncocompact lattice $\Lambda$ in $\SO(2,q)$,
	\end{itemize}
such that $\Lambda \cap H$ is cocompact in~$H$.
\end{eg}

\begin{proof}
Let $H$ be the copy of $\SO(2,4)$ that is provided by Example~\ref{SO6bad}, and let $G$ be any copy of $\SO(2,3)$ in $H$.
\end{proof}

We conclude by showing that the restriction $\ell < n + m$ cannot be removed from Corollary~\ref{orthogonalcorollary}. In particular, we give a counterexample in the case when $\ell=n+m$.

\begin{eg}\label{SOequalBad}
Let $G = \SO(n,m)$, $H = G \times \SO(n + m)$, and $G' = \SO(n, n + 2m)$, so there is a natural embedding of~$H$ in~$G'$. Then there is a noncocompact lattice $\Lambda$ in~$G'$ such that $H \cap \Lambda$ is a cocompact lattice in~$H$.
\end{eg}

\begin{proof}
Let
\begin{itemize}
  \item $\sigma \bigl( a + b \sqrt{2} \bigr) = a - b \sqrt{2}$ for $a,b \in \ZZ$,
    \item $\mathcal{Z} = \bigl\{\, \bigl( \vec v, \sigma(\vec v) \bigr) \mid \vec v \in \ZZ \bigl[\sqrt{2}\bigr]^{n+m} \,\bigr\} \subseteq \RR^{2(n+m)}$,
      \item $f \colon \RR^{2(n+m)} \to \RR$ defined by
        $$f(\vec x, \vec y, \vec z, \vec w) = \sqrt{2} \sum_{i=1}^n (x_i^2 - z_i^2) - \sum_{j=1}^m (y_j^2 + w_j^2)  ,$$
  for $\vec x, \vec z \in \RR^n$ and $\vec y, \vec w \in \RR^m$,
        \item $\Lambda = \{\, g \in \SO_{2(n+m)}(f) \mid g \mathcal{Z} = \mathcal{Z} \,\}$.
          \end{itemize}

Since $f$ is a quadratic form of signature $(n, n + 2m)$, we may identify $G'$ with $\SO_{2(n+m)}(f)$. Also, since $f( \vec p ) \in \ZZ$ for all $\vec p \in \mathcal{Z}$, any linear change of basis that maps $\mathcal{Z}$ to~$\ZZ$ will turn $f$ into a quadratic form with rational coefficients, so $\Lambda$ is an arithmetic lattice in $\SO_{2(n+m)}(f)$. Moreover, as $f$ is isotropic, $\Lambda$ is noncocompact.

We may identify $H$ with the subgroup of $\SO_{2(n+m)}(f)$ that stabilizes both $\{\vec x = 0,  \vec y = 0\}$ and $\{\vec z = 0,  \vec w = 0\}$. Then
$$\textstyle H \cap \Lambda
\iso \SO_{n+m} \left( \sqrt{2} \sum_{i=1}^m x_i^2 - \sum_{j=1}^n y_j^2 ; \ZZ[\sqrt{2}] \right)$$
is the usual example of a cocompact lattice in $\SO(n,m) \times \SO(n + m)$ that is obtained by restriction of scalars.
\end{proof}

\end{appendix}


\end{document}